\numberwithin{equation}{section}  
\newtheorem{theorem}{Theorem}[section]
\newtheorem{lemma}[theorem]{Lemma}
\newtheorem{proposition}[theorem]{Proposition}
\newtheorem{corollary}[theorem]{Corollary}
\theoremstyle{definition}
\newtheorem{definition}[theorem]{Definition}
\newtheorem{example}{Example}
\theoremstyle{remark}
\newtheorem{remark}[theorem]{Remark}
\newtheorem{question}{Question}
\DeclareMathOperator{\lcm}{lcm}
\DeclareMathOperator{\supp}{supp} %
\DeclareMathOperator{\ft}{\mathcal{F}}
\newcommand{\tran}[1][k]{T_{#1}} 
\newcommand{\dila}[1][A^j]{D_{\! #1}} 
\newcommand{\expo}[1]{\mathrm{e}^{#1}} 
\newcommand{\meas}[1]{\abs{#1}}
\newcommand{\card}[1]{\# \lvert#1\rvert}
\newcommand{\charfct}[1]{\mathbf{1}_{#1}} 
\renewcommand{\implies}{\Rightarrow}
\renewcommand{\iff}{\Leftrightarrow}
\newcommand{\eps}{\ensuremath{\varepsilon}}
\newcommand{\union}{\cup}
\newcommand*{\numbersys}[1]{\ensuremath{\mathbb{#1}}} 
\newcommand*{\R}{\numbersys{R}}
\newcommand*{\Rn}{\numbersys{R}^n}
\newcommand*{\Q}{\numbersys{Q}}
\newcommand*{\Z}{\numbersys{Z}}
\newcommand*{\Zn}{\numbersys{Z}^n}
\newcommand*{\N}{\numbersys{N}}
\newcommand{\itvcc}[2]{\ensuremath{\left[{#1},{#2}\right]}} %
\newcommand{\itvco}[2]{\ensuremath{\left[{#1},{#2}\right)}} %
\newcommand{\abs}[1]{\ensuremath{\left\lvert#1\right\rvert}}
\newcommand{\abssmall}[1]{\ensuremath{\lvert#1\rvert}}
\newcommand{\norm}[2][]{\ensuremath{\left\lVert#2\right\rVert_{#1}}}
\newcommand{\innerprod}[3][]{\ensuremath{\left\langle #2,#3\right\rangle_{\! #1}}}
\newcommand{\innerprods}[2]{\ensuremath{\langle #1,#2\rangle}}
\newcommand{\set}[1]{\ensuremath{\left\lbrace{#1}\right\rbrace}}
\newcommand{\setprop}[2]{\ensuremath{\left\lbrace{#1} : {#2}\right\rbrace}}
\newcommand{\ie}{i.e.,\xspace} 
\newcommand{\eg}{e.g.,\xspace} 
\newcommand{\ifft}{if, and only if,}
\newcommand{\bn}{\mathbb{N}}
\newcommand{\br}{\mathbb{R}}
\newcommand{\brn}{{\mathbb R^n}}
\newcommand{\bz}{\mathbb{Z}}
\newcommand{\bzn}{{\mathbb Z^n}}
\newcommand{\spa}{\operatorname{span}}
\newcommand{\ov}{\overline}
\newcommand{\ch}[1]{\mathbf{1}_{#1}}
\newcommand{\ga}{\gamma}
\newcommand{\Ga}{\LG}
\newcommand{\ve}{\varepsilon}
\newcommand{\vp}{\varphi}
\newcommand{\la}{\lambda}
\newcommand{\lan}{\langle}
\newcommand{\ran}{\rangle}
\newcommand{\af}[1]{\ensuremath{\mathcal{A}(#1)}} 
\newcommand{\lat}[1]{\ensuremath\mathsf{#1}} 
 \newcommand{\LG}{\ensuremath\lat{\Gamma}}
 \newcommand{\LL}{\ensuremath\lat{\Lambda}}
\newcommand\D{\mathscr{D}} 
\newcommand\sumstyle[2]{#1: #2} 
\begin{document}

\title{Oversampling of wavelet frames for real dilations} 
\author{Marcin Bownik}
\address{Department of Mathematics, University of Oregon, Eugene,
OR 97403--1222, U.S.A.}
\email{mbownik@uoregon.edu}

\author{Jakob Lemvig}
\address{Department of Mathematics, Technical University of Denmark,
Matematiktorvet Building 303, 2800 Kgs. Lyngby, Denmark.}
\email{j.lemvig@mat.dtu.dk}

\thanks{The first author was partially supported by NSF grant DMS-0653881.}

\keywords{wavelets, affine frames, real dilations, Second Oversampling Theorem, shift invariant subspaces, lattice, approximate transversal, transversal constellation, approximate dual lattice}

\subjclass[2010]{Primary: 42C40, Secondary: 11H06, 52C07}
\date{\today}

\begin{abstract}
We generalize the Second Oversampling Theorem for wavelet frames and dual wavelet frames from the setting of integer dilations to real dilations. We also study the relationship between dilation matrix oversampling of semi-orthogonal Parseval wavelet frames and the additional shift invariance gain of the core subspace.
\end{abstract}
\maketitle

\section{Introduction}
\label{sec:intro}

Oversampling of wavelet frames has been a subject of extensive study by several authors dating back to the early 1990's. The first oversampling results are due to Chui and Shi \cite{CS, CS2}, who proved that oversampling by odd factors preserves tightness of dyadic affine frames. This is now the central result of the subject known as the Second Oversampling Theorem. Its higher dimensional generalizations to integer matrix dilations were studied by Chui and Shi \cite{CS3}, Johnson \cite{MR2048403}, Laugesen \cite{La02}, Ron and Shen \cite{RS97}. In particular, these authors introduced (in several equivalent forms) the class of oversampling matrices ``relatively prime'' to a fixed dilation $A$ and they established several oversampling results for (not necessarily tight) affine frames. Dutkay and Jorgensen \cite{DJ} shed a new light on these results by showing that oversampling of orthonormal (or frame) wavelets by such matrices leads to orthonormal (or frame) super-wavelets, respectively. 

Chui and Sun \cite{CSun2} have completed the understanding of the case of integer dilations by showing that the class of ``relatively prime'' matrices is optimal for the Second Oversampling Theorem. That is, if an oversampling matrix falls out of this class, then the oversampling does not preserve tight frame property in general. However, it is possible to give a characterization of oversampling matrices preserving tightness once affine frame generators are chosen. These results are also due to
Chui and Sun \cite{CSun1, CSun2} who extended earlier results by Catal\'an \cite{Cat}.

Despite this progress, much less is known on oversampling of affine systems generated by non-integer dilations. Chui, Czaja, Maggioni, and Weiss \cite{CCMW} have proved some results on the oversampling of tight affine frames and dual affine frames generated by special classes of real dilations. 
Hern{\'a}ndez, Labate, Weiss, and Wilson \cite{HLWW} extended the Second Oversampling Theorem to (not necessarily tight) affine frames associated with rational dilations. Moreover, they also considered more general types of scale adaptive oversampling typically arising in the study of quasi-affine systems, see also \cite{B03, BLquasi, CSS, RS97}. However, these results did not attempt to cover all possible real dilations.

The goal of this paper is to extend the Second Oversampling Theorem to arbitrary real dilations. We propose yet another condition on the oversampling lattice which guarantees preservation of frame bounds of the oversampled affine system. In the case of integer dilations, our condition is easily seen to be equivalent with the previously mentioned optimal ``relative prime'' condition. Moreover, in the case of rational dilations, our result generalizes the above mentioned result in \cite{HLWW}. Since our condition is applicable for general real dilations, including non-integer classes of dilations considered in \cite{CCMW, HLWW}, it unifies and extends previous results. In particular, our oversampling result is applicable both for non-tight frames and dual affine frames. 

To achieve our goal we introduce and study new concepts in the theory of lattices involving approximate representatives of distinct cosets and approximate duals. We have built our methods from scratch since we could not find similar results in the existing literature. We believe that our results could be of independent interest. There are two key results worth mentioning here. Our first theorem shows the existence of an approximate constellation for a suitable collection of lattices. In proving this result we have adapted the notion of a constellation, which is borrowed from the coding theory as in the work of Calderbank and Sloane \cite{cas}, to the setting of approximate coset representatives. Our second result is an extension of the duality identity 
\[
(\LG \cap \LL)^*= \ov{\LG^* + \LL^*}\qquad\text{for lattices }\LL, \LG \subset \br^n.
\]
We establish an analogue of this identity for finitely generated (but not necessarily discrete) subgroups $\LG \subset\br^n$ in terms of approximate duals. 
This is shown using arguments involving uniform distribution of sequences \cite{KN}.

The remaining elements of our techniques are more standard and involve the use of almost periodic functions. This technique was pioneered by Laugesen \cite{La01, La02} in his work on translational averaging of the wavelet functional, and later extended by Hern{\'a}ndez, Labate, Weiss, and Wilson \cite{HLW, HLWW}, and the authors \cite{BLquasi}. Finally, the last section relies on a general result about shift-invariance gain of principal shift-invariant spaces. This is a higher dimensional analogue of a result due to Aldroubi, Cabrelli, Heil, Kornelson, and Molter \cite{achkm}.

The paper is organized as follows. In Section \ref{sec:appr-transv-duals} we introduce and study the notions of approximate transversals and approximate duals. 
In Section \ref{sec:oversampling} we show the generalization of the Second Oversampling Theorem to real dilations.
In Section \ref{sec:dual} we show oversampling results for dual affine
frames. We also give a counterexample to one of the results claimed in
the paper of Chui, Czaja, Maggioni, and Weiss \cite{CCMW}.  Finally, in
Section \ref{sec:gain} we show results on the equivalence of tight
frame preservation for dilation matrix oversampling of the translation
lattice and the membership in Behera--Weber classes of wavelets \cite{Be, We}.

We end this introduction by reviewing some basic definitions. A \emph{frame} for a separable
Hilbert space $\mathcal{H}$ is a countable collection of vectors $\{f_j\}_{j \in
  \mathbb{J}}$ for which there are
constants $0 < C_1 \leq C_2 < \infty$ such that
\[ 
C_1 \norm{f}^2 \leq \sum_{j \in \mathbb{J}} \abs{\innerprod{f}{f_j}}^2 \leq
C_2 \norm{f}^2 \qquad\text{for all }f\in \mathcal{H}.
\]
If the upper bound in the above inequality holds, then $\{f_j\}$ is
said to be a \emph{Bessel sequence} with Bessel constant $C_2$.   
A frame $\{f_j\}$ is said to be \emph{tight} if we can
 choose $C_1 = C_2$; if, furthermore, $C_1= C_2=1$, then the
 sequence $\{f_j\}$ is said to be a \emph{Parseval frame}.

 Two Bessel sequences $\{f_j\}$ and $\{g_j\}$ are said to be \emph{dual 
 frames} if  
 \[ f = \sum_{j \in \mathbb{J}} \innerprod{f}{g_j}f_j \qquad \text{for all } f
 \in \mathcal{H}. 
 \] 
 It can be shown that two such Bessel sequences indeed are frames, and
 we shall say that the frame $\{g_j\}$ is \emph{dual} to $\{f_j\}$, and
 vice versa. The book by Christensen \cite{oc_03} serves as an introduction to the frame theory.

For $f \in L^1(\Rn)$, the Fourier transform is defined by 
\[
\ft
f(\xi)=\hat f(\xi) = \int_{\Rn} f(x)\mathrm{e}^{-2 \pi i
  \innerprod{\xi}{x}} \mathrm{d}x
  \]
with the usual extension to
$L^2(\Rn)$. We will frequently prove our results on the following dense subspace
of $L^2(\Rn)$
\begin{equation}
\label{eq:def-D}
\D  = \setprop{f \in L^2 (\Rn)}{\hat f \in L^\infty(\Rn) \text{ and }
\supp \hat f \text{ is compact in } \Rn \setminus \set{0} }.
\end{equation}

\section{Approximate transversals and duals}
\label{sec:appr-transv-duals}

In this section we introduce some new notions in the theory of lattices that are understood here as discrete subgroups of $\Rn$. We refer to the book by Cassels \cite{MR1434478} for basic properties of lattices. In particular, we introduce and study the notions of: approximate representatives of distinct cosets, an approximate transversal constellation, and an approximate dual. We shall build our theory from scratch since we could not find such results in the existing literature.

The notion of a constellation is frequently used in the coding theory. In particular, Calderbank and Sloane \cite{cas} have investigated signal constellations consisting of a finite number of points from a lattice $\LL$, with an equal number of points from each coset of a sublattice $\LG \subset \LL$. We shall use the same notion albeit in the approximate sense defined below.

\begin{definition}
Suppose that $\LG \subset \LL$ are two full rank lattices in $\Rn$ and
$\ve\ge0$. We say that a set $D=\{d_1,\ldots, d_l\}$, where $l =
\card{\LL/\LG}$ is the order of the quotient group, is an
$\ve$-approximate transversal of $\LL/\LG$ if there exists set
$D'=\{d_1',\ldots, d_l'\}$ of representatives of distinct cosets of
$\LL/\LG$ such that $|d_i-d_i'|\le\ve$ for all $i=1,\ldots,l$. We say
that a multiset (set with multiplicities) $K$ is an $\ve$-approximate transversal constellation if it is a union of a finite number of $\ve$-approximate transversals.
\end{definition}

The following result is a generalization of \cite[Lemma
23.19]{MR0156915}, see also \cite[Lemma 3.6]{BLquasi}.

\begin{lemma}\label{apple}
Suppose that $\LG \subset \LL$ are two full rank lattices in $\Rn$. Suppose that $D$ is an $\ve$-approximate transversal constellation of $\LL/\LG$ for some $\ve\ge 0$. Then
\begin{equation*}
\frac{1}{\card{D}}  \sum_{d \in D} \expo{2\pi i \innerprod{m}{d}} =
  \begin{cases}
    1 +  O(\abs{m}\ve) & m \in \LL^\ast ,\\
    O(\abs{m}\ve) & m \in \LG^* \setminus \LL^\ast,
  \end{cases}
\end{equation*}
as $\eps \to 0$.
\end{lemma}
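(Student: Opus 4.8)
The plan is to linearize the exponential in the perturbation $d-d'$ and thereby reduce the claim to the classical ($\ve=0$) orthogonality relation for an exact transversal constellation, i.e.\ to the generalization of \cite[Lemma 23.19]{MR0156915} referred to above.

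First I would fix $m\in\LG^*$ and, for each $d\in D$, choose an exact coset representative $d'\in\LL$ with $\abs{d-d'}\le\ve$; these exist by the definition of an $\ve$-approximate transversal constellation, and the collection $D'=\set{d'}$ is then an exact transversal constellation of $\LL/\LG$ with $\card{D'}=\card{D}$. Factoring
\[
\expo{2\pi i\innerprod{m}{d}}=\expo{2\pi i\innerprod{m}{d'}}\,\expo{2\pi i\innerprod{m}{d-d'}},
\]
I would estimate the second factor using the elementary bound $\abs{\expo{2\pi i t}-1}\le 2\pi\abs{t}$ together with Cauchy--Schwarz, $\abs{\innerprod{m}{d-d'}}\le\abs{m}\abs{d-d'}\le\abs{m}\ve$. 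Since $\absbig{\expo{2\pi i\innerprod{m}{d'}}}=1$, this yields, uniformly over $d\in D$,
\[
\expo{2\pi i\innerprod{m}{d}}=\expo{2\pi i\innerprod{m}{d'}}+O(\abs{m}\ve),
\]
with an absolute implied constant.

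Averaging this identity over the $\card{D}$ elements of $D$, the error contributes $O(\abs{m}\ve)$ (one such term per summand, divided by $\card{D}$), so
\[
\frac{1}{\card{D}}\sum_{d\in D}\expo{2\pi i\innerprod{m}{d}}=\frac{1}{\card{D}}\sum_{d'\in D'}\expo{2\pi i\innerprod{m}{d'}}+O(\abs{m}\ve).
\]
It then remains to evaluate the exact sum. If $m\in\LL^*$, then $\innerprod{m}{d'}\in\Z$ for every $d'\in\LL$, so each summand equals $1$ and the average is $1$. If $m\in\LG^*\setminus\LL^*$, then on each exact transversal the map $d'+\LG\mapsto\expo{2\pi i\innerprod{m}{d'}}$ is a well-defined nontrivial character of the finite group $\LL/\LG$, whose sum over a complete set of coset representatives vanishes; hence the average over $D'$ is $0$. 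Substituting the two cases gives the asserted dichotomy.

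The computation is elementary, so there is no deep obstacle; the points requiring care are (i) checking that the implied $O$-constant is genuinely absolute and uniform in $d\in D$, in $m$, and in $\ve$, so that the conclusion is meaningful as $\ve\to 0$; and (ii) verifying that $D'$ really is an exact transversal constellation of the same cardinality as $D$, which is precisely what lets me apply the classical orthogonality relation verbatim to the main term.
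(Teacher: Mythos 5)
Your argument is correct and follows essentially the same route as the paper's proof: compare each $d$ to its exact coset representative $d'$, bound the termwise discrepancy by $2\pi\abs{m}\ve$ via $\abs{\expo{2\pi i t}-\expo{2\pi i s}}\le 2\pi\abs{t-s}$ and Cauchy--Schwarz, and then invoke the exact orthogonality relation (Hewitt--Ross) for the main term. The only cosmetic difference is that you factor the exponential rather than subtract, and you spell out the character-sum evaluation instead of citing it.
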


\begin{proof}
Without loss of generality, we can assume that $D=\{d_1,\ldots, d_l\}$ be an 
$\ve$-approximate transversal of $\LL/\LG$, \ie we assume that $D$ is a constellation of only one transversal.  Let $D'=\{d_1',\ldots, d_l'\}$ 
be representatives of distinct cosets
of $\LL/\LG$. Then, for any $m \in \Rn \supset \LG^\ast$,
\begin{align*}
  \abs{\sum_{i=1}^l \expo{2\pi i \innerprod{m}{d_i}} -  \sum_{i=1}^l
    \expo{2\pi i \innerprods{m}{d_i'}}  } &\le \sum_{i=1}^l \abs{ \expo{2\pi i \innerprod{m}{d_i}} - 
    \expo{2\pi i \innerprods{m}{d_i'}} } \\
 &\le \sum_{i=1}^l 2\pi \abs{ \innerprod{m}{d_i} - \innerprods{m}{d_i'} }\\
  &\le \sum_{i=1}^l 2\pi \abs{m} \abs{d_i  - d_i'} \le 2\pi
  \abs{m} l \, \eps.
\end{align*}
 Since, by Hewitt and Ross   \cite[Lemma
23.19]{MR0156915}, 
 we have
\begin{equation*}
\frac{1}{l}  \sum_{i=1}^l \expo{2\pi i \innerprod{m}{d_i'}} =
  \begin{cases}
    1 & m \in \LL^\ast ,\\
    0 & m \in \LG^* \setminus \LL^\ast,
  \end{cases}
\end{equation*}
the lemma is proved. 
\end{proof}

We shall need a result about the existence of an approximate transversal constellation for suitable collections of lattices which is of independent interest. A prototype of this result for exact coset representatives takes the following form.

\begin{lemma}\label{scr}
Suppose that we have a finite number of full rank lattices $\LG_i
\subset \LL_i$, $i=1,\ldots, J$, and let $l_i = \card{ \LL_i /
  \LG_i}$. Assume that there exists a full rank lattice $\Delta$ such that
\begin{align}
\label{scr1}
\Delta & \subset \bigcap_{i=1}^J \LL_i ,
\\
\label{scr2}
\LL_i &\subset \Delta + \Ga_i  \qquad\text{for each }i=1,\ldots, J.
\end{align} 
Then, there exists a finite multiset $K \subset \Delta$ such that
\begin{align*}
\frac{\card{K \cap (\gamma + \LG_i)}}{\card{ K}} &= \frac{1}{l_i}  \qquad\text{for all }\ga \in \LL_i, \ i=1,\ldots, J.
\end{align*} 
In other words, $K$ consists of an equal number of points from each coset of $\LL_i / \LG_i$, simultaneously for all $i=1,\ldots, J$.
\end{lemma}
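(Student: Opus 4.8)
The plan is to reduce the statement to an elementary counting problem about the single lattice $\Delta$ together with finitely many finite-index sublattices; conditions \eqref{scr1} and \eqref{scr2} are exactly what is needed to set this up. Since $\Delta \subset \LL_i$ by \eqref{scr1}, the assignment $\delta \mapsto \delta + \LG_i$ defines a group homomorphism $\phi_i \colon \Delta \to \LL_i / \LG_i$. Its image is $(\Delta + \LG_i)/\LG_i$, and \eqref{scr2} together with the inclusions $\Delta, \LG_i \subset \LL_i$ forces $\Delta + \LG_i = \LL_i$, so each $\phi_i$ is surjective. Setting $\Delta_i \defined \ker \phi_i = \Delta \cap \LG_i$, the first isomorphism theorem gives $\Delta / \Delta_i \cong \LL_i / \LG_i$, so $\Delta_i$ is a sublattice of $\Delta$ of index $l_i$. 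Under this identification, for $\ga \in \LL_i$ the preimage $\phi_i^{-1}(\ga + \LG_i)$ is precisely the set of $d \in \Delta$ with $d \in \ga + \LG_i$, so counting elements of a candidate set $K \subset \Delta$ inside the coset $\ga + \LG_i$ is the same as counting elements of $K$ inside a fixed coset of $\Delta / \Delta_i$.

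Next I would pass to the common refinement $\Delta_0 \defined \bigcap_{i=1}^J \Delta_i$. As a finite intersection of finite-index sublattices of the full rank lattice $\Delta$, $\Delta_0$ is itself a full rank lattice of finite index $N \defined \card{\Delta/\Delta_0}$ in $\Delta$. The candidate for the desired multiset is simply a complete set of coset representatives $K = \set{k_1, \ldots, k_N} \subset \Delta$ of $\Delta/\Delta_0$, one point chosen from each coset; thus $\card{K} = N$, and in fact $K$ may be taken to be an honest set, so no multiplicities are needed here.

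It then remains to verify the equidistribution, simultaneously for all $i$. For each fixed $i$ the inclusion $\Delta_0 \subset \Delta_i$ induces the canonical surjection $\pi_i \colon \Delta/\Delta_0 \to \Delta/\Delta_i$, whose kernel is $\Delta_i/\Delta_0$; hence every fiber of $\pi_i$ has the same cardinality $\card{\Delta_i/\Delta_0} = N/l_i$. Since $K$ meets each coset of $\Delta/\Delta_0$ in exactly one point, the number of elements of $K$ lying in any prescribed coset of $\Delta/\Delta_i$ equals the number of $\Delta_0$-cosets in its $\pi_i$-fiber, namely $N/l_i$. Translating back through $\Delta/\Delta_i \cong \LL_i/\LG_i$, this says $\card{K \cap (\ga + \LG_i)} = N/l_i$ for every $\ga \in \LL_i$, which divided by $\card{K} = N$ yields the claimed value $1/l_i$.

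I expect the only genuine subtlety to be the simultaneous nature of the requirement: one must equidistribute over all the quotients $\LL_i/\LG_i$ at once, which is exactly what forces the passage to the common sublattice $\Delta_0$ rather than handling each index $i$ in isolation. The clean fiber-counting step works only because a surjective homomorphism of finite abelian groups has all fibers of equal size. Everything else is bookkeeping: checking that \eqref{scr1}--\eqref{scr2} give well-definedness and surjectivity of the $\phi_i$, and that a finite intersection of finite-index full rank lattices remains a finite-index full rank lattice. In contrast to the approximate version required later, no analytic input enters this exact statement.
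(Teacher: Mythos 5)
Your argument is correct, and it takes a genuinely different route from the paper's. The paper does not prove Lemma \ref{scr} directly: it obtains it as a specialization of the proof of Lemma \ref{sacr}, which constructs $K$ as the algebraic sum $D^1 + \cdots + D^J$ of one transversal $D^i$ of each $\LL_i/\LG_i$ chosen inside $\Delta$, and then checks that for each fixed $i_0$ the multiset $K$ decomposes into $\prod_{i \neq i_0} l_i$ translated copies of $D^{i_0}$, each of which is again a transversal of $\LL_{i_0}/\LG_{i_0}$. You instead exploit the group structure of $\Delta$: the surjections $\Delta \to \LL_i/\LG_i$ with kernels $\Delta_i = \Delta \cap \LG_i$, the common refinement $\Delta_0 = \bigcap_i \Delta_i$, and a fiber count over $\Delta/\Delta_0$. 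Your version is cleaner for the exact statement and buys something the paper's does not: an honest set (no multiplicities) of possibly smaller cardinality $N = \card{\Delta/\Delta_0} \le \prod_i l_i$. What the paper's sum-of-transversals construction buys is robustness under perturbation: in Lemma \ref{sacr} the set $\Delta(\ve)$ is merely an intersection of $\ve$-neighbourhoods of the $\LL_i$ and is not a subgroup, so there are no kernels $\Delta \cap \LG_i$, no common refinement, and no quotient $\Delta/\Delta_0$ to count fibers over; the algebraic sum of approximate transversals, by contrast, survives the passage to the approximate setting essentially verbatim. So your proof is a correct and more elementary treatment of the exact case, but it does not foreshadow the approximate version the paper actually needs.
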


In general, the condition \eqref{scr1} is too restrictive for our purposes since the intersection $\bigcap_{i=1}^J \LL_i $ might be trivial and then \eqref{scr2} cannot hold. To remedy this situation, we shall need a variant of Lemma \ref{scr} for approximate coset representatives. We shall skip the proof of Lemma \ref{scr} since it follows by a direct modification of the proof of Lemma \ref{sacr}.

\begin{lemma}\label{sacr}
Suppose that we have a finite number of full rank lattices $\LG_i \subset \LL_i$, $i=1,\ldots, J$, and let $l_i = \card{ \LL_i / \LG_i}$. Assume that, for all $\ve>0$, we have\begin{align}
\label{sacr1}
\LL_i &\subset \Delta(\ve) + \Ga_i  \qquad\text{for each }i=1,\ldots, J, \text{ where }
\\
\label{sacr2}
 \Delta(\ve) & := \bigcap_{i=1}^J (\LL_i + B(0,\ve)).
\end{align} 
Then, for all sufficiently small $\ve>0$ there exists a finite multiset $K=K(\ve) \subset \Delta(\ve)$ such that
\begin{align}
\label{sacr4}
\frac{\card{K \cap (\gamma + \LG_i +B(0,\ve))}}{\card{ K}} &= \frac{1}{l_i}  \qquad\text{for all }\ga \in \LL_i, \ i=1,\ldots, J.
\end{align} 
\end{lemma}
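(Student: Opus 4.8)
The plan is to imitate the clean group-theoretic argument that proves the exact Lemma~\ref{scr}. There one uses the homomorphism $\phi\colon\Delta\to\prod_{i=1}^J\LL_i/\LG_i$, $\delta\mapsto(\delta+\LG_i)_i$ (well defined because $\Delta\subset\bigcap_i\LL_i$), lets $K$ be a set of coset representatives of $\ker\phi$ in $\Delta$, and observes that \eqref{scr2} makes each coordinate projection $\pi_i\circ\phi$ onto $\LL_i/\LG_i$; surjectivity forces every fibre of $\pi_i\circ\phi$ to have the same size, which is exactly the equidistribution. The obstruction in the present approximate setting is that $\Delta(\ve)$ is \emph{not} a group, and in fact need not contain any full rank lattice when the $\LL_i$ are incommensurable, so there is no honest homomorphism to exploit. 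I will recover a homomorphism on the finite multiset that I actually construct, by a scaling trick that keeps all the relevant combinations inside a regime where a nearest-point map is additive.

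First I fix $\ve$ below the packing radius of each $\LL_i$ and below half the minimal distance between distinct cosets of $\LG_i$ in $\LL_i$. This guarantees that every $x\in\Delta(\ve)$ has a unique nearest point $\lambda_i(x)\in\LL_i$, that the fattened cosets $\gamma+\LG_i+B(0,\ve)$, $\gamma\in\LL_i$, are pairwise disjoint, and that $x\in\gamma+\LG_i+B(0,\ve)$ is equivalent to $\lambda_i(x)\in\gamma+\LG_i$. I then define the coset-label map $\Psi\colon\Delta(\ve)\to\prod_{i=1}^J\LL_i/\LG_i$ by $\Psi(x)=(\lambda_i(x)+\LG_i)_i$. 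The elementary point I will use is \emph{approximate additivity}: if $x_1,\dots,x_s\in\Delta(\eta)$ and $\mathbf j=(j_1,\dots,j_s)$ are integers with $\bigl(\sum_k\abs{j_k}\bigr)\eta<\ve$, then $\sum_k j_k\lambda_i(x_k)\in\LL_i$ lies within $\ve$ of $\sum_k j_k x_k$, so the latter lies in $\Delta(\ve)$ and, being inside the packing radius, has this as its nearest $\LL_i$-point; hence $\Psi\bigl(\sum_k j_k x_k\bigr)=\sum_k j_k\Psi(x_k)$.

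Next I produce generators inside an arbitrarily fine fattening. Put $N=\prod_i l_i$, so $N$ annihilates $\prod_i\LL_i/\LG_i$, and put $s=\sum_i l_i$. Applying the hypothesis \eqref{sacr1} at level $\eta=\ve/(Ns)$ — legitimate since \eqref{sacr1} is assumed for every positive parameter — I note that for each $i$ and each coset $c\in\LL_i/\LG_i$ there is $d\in\Delta(\eta)\cap\LL_i$ with $d+\LG_i=c$: writing $\lambda=d+\gamma$ with $d\in\Delta(\eta)$, $\gamma\in\LG_i$ from $\LL_i\subset\Delta(\eta)+\LG_i$ forces $d=\lambda-\gamma\in\LL_i$. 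Collecting these $s$ points as $x_1,\dots,x_s$, their images $\Psi(x_k)$ generate a subgroup $H\le\prod_i\LL_i/\LG_i$ whose projection onto each factor $\LL_i/\LG_i$ is onto. I then take $K$ to be the multiset $\set{\sum_{k=1}^s j_k x_k : (j_1,\dots,j_s)\in\set{0,\dots,N-1}^s}$ indexed by the $N^s$ tuples. The coefficient bound $\sum_k j_k<Ns$ places each point in $\Delta(\ve)$, and approximate additivity together with $N\Psi(x_k)=0$ shows that the label map factors as $\Psi\bigl(\sum_k j_k x_k\bigr)=\ov\psi(\mathbf j)$, where $\ov\psi\colon(\Z/N)^s\to H$ is a \emph{surjective} homomorphism of finite groups.

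Finally I read off the equidistribution. For fixed $i$ the composite $\pi_i\circ\ov\psi\colon(\Z/N)^s\to\LL_i/\LG_i$ is a surjective homomorphism, so each coset $c$ has exactly $N^s/l_i$ preimages; translating through the equivalence between fattened-coset membership and the value of the $i$-th coordinate of $\Psi$ gives $\card{K\cap(\gamma+\LG_i+B(0,\ve))}=N^s/l_i$ for every $\gamma\in\LL_i$, which is the desired ratio $1/l_i$ in \eqref{sacr4}. The exact Lemma~\ref{scr} then follows by the identical computation upon replacing $\Delta(\ve)$, $\Psi$, and $\ov\psi$ by the honest lattice $\Delta$, the honest homomorphism $\phi$, and its reduction modulo $N$. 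The one genuinely delicate step, and the one I expect to demand the most care, is the bookkeeping that keeps all $N^s$ integer combinations simultaneously within the additivity radius of every $\LL_i$ while still letting the generators generate a subgroup that surjects onto each quotient; this is exactly what dictates the choice $\eta=\ve/(Ns)$ and the reduction modulo the fixed exponent $N$.
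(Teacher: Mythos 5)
Your proof is correct, and it takes a genuinely different route from the paper's. The paper's argument is purely combinatorial: it sets $\delta=\ve/J$, uses \eqref{sacr1} at scale $\delta$ to pick one $\delta$-approximate transversal $D^i\subset\Delta(\delta)$ of each $\LL_i/\LG_i$, and takes $K=D^1+\cdots+D^J$ as a multiset of $\prod_i l_i$ points; for each fixed $i_0$ this algebraic sum splits into $\prod_{i\ne i_0}l_i$ translates of $D^{i_0}$, each of which is shown to be an $\ve$-approximate transversal of $\LL_{i_0}/\LG_{i_0}$, and the disjointness of the $\ve$-fattened cosets then yields the count. You instead reconstruct an honest group homomorphism: the nearest-point maps $\lambda_i$ are additive on integer combinations whose coefficient sum is controlled relative to the fattening parameter, and this transports the counting problem to the fibres of a surjective homomorphism $(\Z/N)^s\to\LL_i/\LG_i$ of finite groups, which are automatically equidistributed. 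Both arguments hinge on the same two facts: that \eqref{sacr1} may be invoked at an auxiliary, much finer scale ($\delta=\ve/J$ in the paper, $\eta=\ve/(Ns)$ for you) so that all the relevant sums remain in $\Delta(\ve)$, and that for small $\ve$ the fattened cosets $\gamma+\LG_i+B(0,\ve)$ are pairwise disjoint. The paper's route buys economy, namely a constellation of only $\prod_i l_i$ points and no rounding maps; yours buys conceptual transparency, making explicit that the approximate setting still carries the group structure responsible for equidistribution, and it specializes verbatim to the exact Lemma \ref{scr}. One cosmetic remark: your $K$ has $N^s=\bigl(\prod_i l_i\bigr)^{\sum_i l_i}$ elements, far more than necessary, but since the lemma places no constraint on $\card{K}$ this is harmless.
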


Lemma \ref{sacr} can also be formulated in the language of approximate
transversals.  Suppose that, for all $\ve>0$, there exists a set $\Delta$
lying in the $\ve$-neighbourhood of each $\LL_i$ and containing
$\ve$-approximate transversals of each $\LL_i /\LG_i$. Then there
exists a subset $K \subset \Delta$ which is $\ve$-approximate
transversal constellation  simultaneously for each $\LL_i /\LG_i$.

\begin{proof}[Proof of Lemma \ref{sacr}]
For any $\ve>0$ we define $\delta=\ve/J$.
For each $i=1,\ldots, J$, by \eqref{sacr1} we can choose $\delta$-approximate transversal $D^i:=\{d_1^i, \ldots, d_{l_i}^i \} \subset \Delta(\delta)$ of $\LL_i / \LG_i$. Define the set $K$ as an algebraic sum
\[
K= D^1 + \ldots + D^J.
\]
Treating $K$ as a multiset, $K$ has exactly $\prod_{i=1}^J l_i$ elements. Moreover, since each $D^i \subset \Delta(\delta)$, we have that $K \subset \Delta(\delta J ) = \Delta(\ve)$. 

Fix some $1 \le i_0 \le J$ and consider 
\begin{equation}\label{sacr6}
k_0 = \sum_{i=1, \ i \ne i_0}^J d_{m_i}^i, \qquad\text{for some choice of } 1 \le m_i \le l_i.
\end{equation}
We claim that $\{d^{i_0}_1 + k_0, \ldots, d^{i_0}_{l_{i_0}} +k_0\}$ is a $\ve$-approximate transversal of $\LL_{i_0} / \LG_{i_0}$. 
Indeed, by  \eqref{sacr2} we can find $D'^i=\{d'^i_1, \ldots, d'^i_{l_{i}}\} \subset \LL_{i_0}$ such that
\begin{equation}\label{sacr8}
|d'^i_j - d^{i}_j| < \delta
\qquad\text{for } i=1,\ldots, J, \ j=1,\ldots, l_i.
\end{equation}
Furthermore, if $\delta>0$ is sufficiently small, then $D'^{i_0}$ is an exact transversal (a set of representatives of distinct cosets) of $\LL_{i_0} / \LG_{i_0}$. Let 
\[
k'_0 = \sum_{i=1, \ i \ne i_0}^J d'^i_{m_i} \in \LL_{i_0}.
\]
Clearly, $\{d'^{i_0}_1 + k'_0, \ldots, d'^{i_0}_{l_{i_0}} +k'_0\}$ is also an exact transversal of $\LL_{i_0} / \LG_{i_0}$. Thus, by \eqref{sacr8} $\{d^{i_0}_1 + k_0, \ldots, d^{i_0}_{l_{i_0}} +k_0\}$ is a $\delta J = \ve$-approximate transversal of $\LL_{i_0} / \LG_{i_0}$. This proves the claim.

Since there are precisely $\prod_{i=1, i \ne i_0}^J l_i$ elements $k_0$ of the form \eqref{sacr6}, $K$ is a union of the same number of  $\ve$-approximate transversals of $\LL_{i_0} / \LG_{i_0}$. Take sufficiently small $\ve>0$, say 
\[
0<\ve < \min\{ |\la|: 0 \ne \la \in \LL_{i_0} \}/2.
\]
Then we observe that $\ve$-neighbourhoods of distinct cosets of  $\LL_{i_0} / \LG_{i_0}$ are disjoint. Thus, for any $\ga \in \LL_{i_0}$, 
\[
\frac{\card{K \cap (\gamma + \LG_{i_0} +B(0,\ve))}}{\card{ K}} = \frac{\prod_{i=1, i \ne i_0}^J l_i}{\prod_{i=1}^J l_i}=  \frac{1}{l_{i_0}} .
\]
Since $1 \le i_0 \le J$ was arbitrary, this shows that \eqref{sacr4} holds for all sufficiently small $\ve>0$.
\end{proof}

We shall also need some additional results about approximate duals of finitely generated subgroups of $\Rn$ which are of independent interest. 

\begin{definition}
Suppose that $F \subset \Rn$ and $\ve \ge 0$. Define an $\ve$-approximate dual of $F$ as 
\[
F^{*,\ve} = \{ x\in \Rn: \lan x, g \ran \in \Z +[-\ve, \ve] \text{ for all } g\in F \}.
\]
In the case when $\ve=0$, we say that $F^{*,0}$ is an exact dual of $F$, which is denoted simply by $F^*$.
\end{definition}

The following basic proposition justifies the name for an approximate dual.

\begin{proposition} \label{elm}
Suppose that $\LL$ is a full rank lattice and $F$ is a basis of $\LL$. Then, for sufficiently small $\ve>0$ we have $\LL^{*,\ve}=\LL^*$. Furthermore, for every $\ve>0$ there exist $\delta,\ve'>0$ such that
\[
\LL^* + B(0,\ve') \subset F^{*,\delta} \subset \LL^* + B(0,\ve).
\]
\end{proposition}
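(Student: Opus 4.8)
The plan is to treat the two assertions separately. Write $F=\{g_1,\dots,g_n\}$ and recall that $\LL^*=\setprop{x\in\Rn}{\lan x,g_i\ran\in\Z \text{ for } i=1,\dots,n}$, since the $g_i$ generate $\LL$ over $\Z$.

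For the equality $\LL^{*,\ve}=\LL^*$ I would show more precisely that it holds as soon as $\ve\le 1/4$. The inclusion $\LL^*\subset\LL^{*,\ve}$ is immediate. For the converse, fix $x\in\LL^{*,\ve}$ and an arbitrary $g\in\LL$. Since $\LL$ is a group, every integer multiple $kg$ lies in $\LL$, so $\lan x,kg\ran=k\lan x,g\ran\in\Z+[-\ve,\ve]$ for all $k\in\Z$. Writing $\lan x,g\ran=m+\theta$ with $m\in\Z$ and $\theta\in(-1/2,1/2]$, the case $k=1$ gives $|\theta|\le\ve$, and for general $k$ the condition forces the distance of $k\theta$ to $\Z$ to be at most $\ve\le 1/4$. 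The one genuinely nonroutine point is the elementary observation that a nonzero $\theta$ with $|\theta|\le 1/4$ cannot satisfy this: the progression $\theta,2\theta,3\theta,\dots$ advances in steps of size $|\theta|\le 1/4$, so some multiple lands in $[1/4,1/2)$ modulo $\Z$, at distance $\ge 1/4>\ve$ from $\Z$. Hence $\theta=0$, i.e. $\lan x,g\ran\in\Z$, and since $g\in\LL$ was arbitrary, $x\in\LL^*$. I expect this to be the crux of the whole proposition: one must exploit all of $\LL$ (not merely the finite basis $F$) in order to upgrade ``within $\ve$ of an integer'' to ``exactly an integer''.

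For the sandwich I would linearize. Let $M$ be the $n\times n$ matrix with rows $g_1,\dots,g_n$, which is invertible because $F$ is a basis of a full rank lattice, so that $Mx=(\lan g_1,x\ran,\dots,\lan g_n,x\ran)$. Then $F^{*,\delta}=M^{-1}(\Zn+[-\delta,\delta]^n)$ and $\LL^*=M^{-1}\Zn$, whence $\LL^*+B(0,r)=M^{-1}\bigl(\Zn+M(B(0,r))\bigr)$. Applying the isomorphism $M$, the two desired inclusions become $\Zn+M(B(0,\ve'))\subset\Zn+[-\delta,\delta]^n$ and $\Zn+[-\delta,\delta]^n\subset\Zn+M(B(0,\ve))$. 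By monotonicity of the operation $A\mapsto\Zn+A$ it suffices to establish the single-cell containments $M(B(0,\ve'))\subset[-\delta,\delta]^n$ and $[-\delta,\delta]^n\subset M(B(0,\ve))$.

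These last two follow from routine ellipsoid estimates. Sandwiching the ellipsoid by balls gives $B(0,r/\norm{M^{-1}})\subset M(B(0,r))\subset B(0,r\norm{M})$, and for cubes one has $B(0,\delta)\subset[-\delta,\delta]^n\subset B(0,\delta\sqrt n)$. Thus $M(B(0,\ve'))\subset B(0,\ve'\norm{M})\subset[-\delta,\delta]^n$ provided $\ve'\norm{M}\le\delta$, and $[-\delta,\delta]^n\subset B(0,\delta\sqrt n)\subset B(0,\ve/\norm{M^{-1}})\subset M(B(0,\ve))$ provided $\delta\sqrt n\le\ve/\norm{M^{-1}}$. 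Choosing $\delta=\ve/(\sqrt n\,\norm{M^{-1}})$ and $\ve'=\delta/\norm{M}$, both are positive and both containments hold. No wraparound subtlety arises, precisely because the monotonicity step reduced each periodic inclusion to a containment of a single cube inside (or around) a single ellipsoid; this second half is therefore only linear algebra once the change of variables via $M$ is set up.
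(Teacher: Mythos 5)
Your proof is correct and complete; note that the paper itself omits the argument entirely (``The proof of Proposition \ref{elm} is left to the reader''), so there is no authorial proof to compare against. Your two key points are exactly the right ones: the first claim genuinely requires testing against integer multiples $kg$ of lattice vectors (the pigeonhole step showing a nonzero $\theta$ with $\abs{\theta}\le 1/4$ has some multiple at distance greater than $1/4$ from $\Z$ is sound), and the sandwich reduces via the change of variables $x\mapsto Mx$ to elementary containments of a cube and an ellipsoid, with the monotonicity of $A\mapsto \Zn+A$ disposing of any periodization issues. The only cosmetic point is the open-versus-closed boundary of $B(0,\ve)$ versus the closed interval $[-\ve,\ve]$ in the definition of $F^{*,\delta}$, which is absorbed by shrinking $\delta$ and $\ve'$ by a factor of $2$.
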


The proof of Proposition \ref{elm} is left to the reader.

\begin{lemma}\label{weyl}
Suppose that $G$ is a finitely generated subgroup of $\Rn$ such that $G \cap \Zn = \{0\}$. Then for any finite subset $F \subset G$, $\ve>0$, and full rank sublattice $\LG \subset \Zn$ we have
\begin{equation}\label{weyl1}
F^{*,\ve} + \LG = \Rn.
\end{equation}
\end{lemma}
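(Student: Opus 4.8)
The plan is to linearize the membership conditions defining $F^{*,\ve}$ and reduce \eqref{weyl1} to a density (Kronecker--Weyl) statement on a torus. Write $F=\set{g_1,\ldots,g_k}$ and introduce the linear map $T\colon \Rn \to \R^k$, $T(x)=(\innerprods{x}{g_1},\ldots,\innerprods{x}{g_k})$. For $y\in\Rn$ and $\gamma\in\LG$ one has $y-\gamma\in F^{*,\ve}$ exactly when $T(y)-T(\gamma)\in\Z^k+[-\ve,\ve]^k$. Writing $\pi\colon\R^k\to\bt^k$ for the quotient map, this says that $\pi(T(\gamma))$ lies within $\ve$ of $\pi(T(y))$ in the coordinatewise torus metric. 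Since $\set{\pi(T(y)):y\in\Rn}=\pi(W)$ with $W:=T(\Rn)$, the identity \eqref{weyl1} is equivalent to the inclusion $\pi(W)\subset\ov{\Lambda}$, where $\Lambda:=\pi(T(\LG))$ is a subgroup of $\bt^k$.

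Next I would compute the closures of the two relevant subgroups via their annihilators in the dual group $\Z^k$ of $\bt^k$, using that $\ov S=(S^\perp)^\perp$ for any subgroup $S\subset\bt^k$. As $W$ is a linear subspace, a character $\chi_n$ is trivial on $\pi(W)$ iff $\innerprods{n}{w}=0$ for all $w\in W$ (a nonzero real pairing cannot stay integral along a line), so $\pi(W)$ is annihilated exactly by $N:=\setprop{n\in\Z^k}{\sum_j n_j g_j=0}$. On the other side, $\chi_n$ is trivial on $\Lambda$ iff $\innerprods{\gamma}{\sum_j n_j g_j}\in\Z$ for every $\gamma\in\LG$, that is iff $\sum_j n_j g_j\in\LG^*$; call this $A:=\setprop{n\in\Z^k}{\sum_j n_j g_j\in\LG^*}$. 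Hence $\ov{\pi(W)}=N^\perp$ and $\ov\Lambda=A^\perp$, and it suffices to prove $A=N$, since then $\ov\Lambda=\ov{\pi(W)}\supset\pi(W)$.

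The heart of the matter is the inclusion $A\subset N$ (the reverse being obvious), and this is exactly where the hypothesis $G\cap\Zn=\set{0}$ is used. Fix $n\in A$ and put $g:=\sum_j n_j g_j$, so that $g\in G\cap\LG^*$. Since $\LG\subset\Zn$ is a full rank sublattice of index $l:=\card{\Zn/\LG}$, we have $\Zn\subset\LG^*$ with $\card{\LG^*/\Zn}=l$, so every coset in $\LG^*/\Zn$ has order dividing $l$; in particular $lg\in\Zn$. Because $g\in G$ and $G$ is a group, $lg\in G\cap\Zn=\set{0}$, so $lg=0$, and as $G\subset\Rn$ is torsion-free we conclude $g=0$, i.e. $n\in N$. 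Thus $A=N$, which finishes the argument.

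I expect the last paragraph to be the only genuine obstacle: one must convert the hypothesis $G\cap\Zn=\set{0}$ into control of $G\cap\LG^*$, and the decisive point is that the finiteness of the index $l=\card{\Zn/\LG}$ forces $l\,\LG^*\subset\Zn$, after which torsion-freeness of $G$ closes the gap. The torus reduction and the annihilator computation are standard; they may equally be phrased through the multidimensional Kronecker theorem or Weyl's equidistribution criterion as in \cite{KN}.
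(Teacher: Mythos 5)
Your proof is correct, and it takes a genuinely different route from the paper's. The paper first reduces to the case where $F$ is linearly independent over $\Q$ and $\LG=\Zn$, and there applies the Weyl criterion to show that the sequence $(\langle k,g_1\rangle,\dots,\langle k,g_d\rangle)_{k\in\Zn}$ is uniformly distributed mod $1$ --- the hypothesis $G\cap\Zn=\{0\}$ being used to guarantee that every nonzero integer combination of the $g_j$ has an irrational coordinate; it then removes the two restrictions by separate scaling arguments (writing the $\Q$-dependent generators as integer combinations of $g_j/N$, and using $N\Zn\subset\LG$ for a general full rank sublattice). You instead work on the torus $\bt^k$ with arbitrary $F$ and $\LG$ from the start, and identify $\ov{\pi(T(\LG))}$ via the double-annihilator identity $\ov S=(S^\perp)^\perp$, reducing everything to the single algebraic fact $A=N$; the hypothesis enters exactly once, in showing $G\cap\LG^*=\{0\}$ via $l\,\LG^*\subset\Zn$ and torsion-freeness. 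Your argument is qualitative (Kronecker-type density via Pontryagin duality rather than equidistribution) and eliminates all the case distinctions and rescalings; the paper's argument yields the stronger equidistribution statement, which, however, is never exploited elsewhere. One cosmetic remark: for a single fixed $\ve$ the identity \eqref{weyl1} is implied by, but not literally equivalent to, the inclusion $\pi(W)\subset\ov\Lambda$; the equivalence holds for the conjunction over all $\ve>0$, which is what the lemma asserts, and since you prove the inclusion this costs nothing.
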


\begin{proof}
First we shall establish a slightly weaker conclusion
\begin{equation}\label{weyl2}
F^{*,\ve} + \Zn = \Rn.
\end{equation}
Then, we shall see that \eqref{weyl2} implies \eqref{weyl1}.

Assume first that $F =\{g_1,\dots, g_d\} \subset G$ is linearly independent over $\Q$. For any $k\in \Zn$ we define a vector $x_k = ( \lan k, g_1 \ran, \ldots, \lan k, g_d \ran ) \in \R^d$. Let $k_1,k_2, \ldots$ be an ordering of all elements of $\Zn$ such that $i  \le  j$ implies $||k_i||_\infty \le ||k_j||_\infty$. We claim that the sequence  $\{x_{k_i}\}_{i\in \N}$ of vectors in $\R^d$ is uniformly distributed (u.~d.) mod $1$. By the Weyl Criterion, see \cite[Theorem 6.2 in Chapter 1]{KN}, this is equivalent to the fact that the sequence of scalars $\{\lan h, x_{k_i} \ran \}_{i\in \N}$ is u.~d.~mod $1$ for any $0 \ne h\in \Z^d$. Observe that
\[
\lan h, x_k \ran = \lan k, y \ran, \qquad\text{where }y= \sum_{j=1}^d h_j g_j \text{ and } h=(h_1,\ldots, h_d).
\]
Moreover, by our hypothesis  $G \cap \Zn = \{0\}$, $y$ has at least one irrational coordinate. Repeating the same argument as in \cite[Lemma 3.2  in Chapter 2]{B03a} shows that the sequence $ \{ \lan k_i, y \ran\}_{i\in \N}$ is u.~d.~mod 1. This proves the claim. As a consequence \eqref{weyl2} holds.

Next, let $F=\{ g_1, \ldots, g_{d'}\} \subset G$ be an arbitrary finite subset of $G$. By rearranging the order of elements in $F$, we can assume that for some $d\le d'$, $\{g_1,\dots, g_d\} $ are  linearly independent over $\Q$,  and the rest of the elements of $F$ are linear combinations of thereof; that is, for $d<i\le d'$
\[
g_i = \sum_{j=1}^d c_j g_j, \qquad c_j \in \Q.
\]
Thus, we can find $N\in \N$ such that  for $d<i\le d'$
\begin{equation}\label{weyl6}
g_i = \sum_{j=1}^d d_j \tilde g_j, \qquad d_j \in \Z,
\end{equation}
where $\tilde g_j = g_j/N$. Let  $\tilde F=\{ \tilde g_1, \ldots, \tilde g_{d}, g_{d+1},\ldots, g_{d'} \} \subset \frac{1}{N}G$. By the already established case we have
\[
{\{\tilde g_1,\ldots, \tilde g_d\}}^{*,\ve} + \Zn = \Rn.
\]
Since $\ve>0$ is arbitrary, using \eqref{weyl6} we can deduce that
\[
{\tilde F}^{*,\ve} + \Zn = \Rn.
\]
We also observe that ${\tilde F}^{*,\ve} \subset F^{*,N\ve}$. This proves \eqref{weyl2} since $\ve>0$ is arbitrary.

Finally, let $\LG \subset \Zn$ be a full rank lattice. There exists $N\in\N$ such that $N\Zn \subset \LG$. The assumption $G \cap \Zn = \{0\}$ actually implies that $G \cap \Q^n = \{0\}$. In particular, we have $(NG)\cap \Zn =\{0\}$. Applying \eqref{weyl2} to a finite subset $NF \subset NG$ we have
\[
\Rn=(NF)^{*,\ve} + \Zn=\frac{1}{N} F^{*,\ve} + \Zn.
\]
Thus,
\[
\Rn =  F^{*,\ve} + N \Zn \subset  F^{*,\ve} + \LG,
\]
which completes the proof of Lemma \ref{weyl}.
\end{proof}

Our next result is a generalization of a duality identity for lattices. If $G$ is a lattice, then we have $(G \cap \Zn)^*=\ov{G^* +\Zn}$, see Corollary \ref{aduc}. However, this conclusion might fail if $G$ is not a discrete subgroup of $\Rn$. For example, if $G \subset \Rn$ is dense in $\Rn$ and $G\cap \Zn =\{0\}$, then this identity fails. Nevertheless, we have the following extension of this identity as a consequence of Lemma \ref{weyl}.

\begin{theorem}\label{adu}
Suppose that $G$ is a finitely generated subgroup of $\Rn$. Define a lattice (not necessarily full rank) $\LG = G \cap \Zn $. Then for any finite subset $F \subset G$ and $\ve>0$ we have
\begin{equation*} 
\LG^* \subset F^{*,\ve} + \Zn.
\end{equation*}
\end{theorem}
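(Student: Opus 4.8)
The plan is to split $G$ into its rational and irrational parts, treat them separately, and then reconcile the two corrections with a single integer translate. Write $H := G \cap \Qn$. Since $G$ is finitely generated and torsion-free and the quotient $G/H$ is torsion-free as well (if $ng \in \Qn$ then $g \in \Qn$, so $g \in H$), the quotient $G/H$ is free and the extension splits: $G = H \oplus G'$, where $G'$ is a finitely generated subgroup with $G' \cap \Qn = \{0\}$, and in particular $G' \cap \Zn = \{0\}$. I fix a $\Z$-basis $h_1,\dots,h_p$ of $H$ and a $\Z$-basis $e_1,\dots,e_q$ of $G'$, so that every $f \in F$ is an integer combination of the $h_i$ and $e_j$. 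Because $H$ is a finitely generated subgroup of $\Qn$, it lies in $\frac1N\Zn$ for some $N \in \N$ and is therefore an honest (possibly non-full-rank) lattice with $H \cap \Zn = G \cap \Zn = \LG$.

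Fix $x \in \LG^*$ and an auxiliary $\ve'>0$ to be specified at the end. First I handle the rational part. Since $H$ and $\Zn$ are lattices with $H \cap \Zn = \LG$, the classical lattice duality identity gives $x \in \LG^* = \ov{H^* + \Zn}$ (when $H$ is not full rank one first restricts to the rational subspace spanned by $H$, where $\LG$ and $H$ are full rank and the orthogonal complement contributes freely to both duals). Hence there exist $y \in H^*$ and $k_0 \in \Zn$ with $x - k_0$ as close to $y$ as desired, and choosing this approximation fine enough yields $\lan x - k_0, h_i \ran \in \Z + [-\ve',\ve']$ for all $i=1,\dots,p$.

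Next I correct the irrational pairings without spoiling the rational ones. Set $\Lambda_0 := \Zn \cap H^*$. Since $H \subset \frac1N\Zn$ forces $N\Zn \subset H^*$, we have $\Lambda_0 \supseteq N\Zn$, so $\Lambda_0$ is a full-rank sublattice of $\Zn$, and crucially every $m \in \Lambda_0$ satisfies $\lan m, h \ran \in \Z$ for all $h \in H$. Put $F' := \{e_1,\dots,e_q\}$ and apply Lemma \ref{weyl} with the subgroup $G'$ (which satisfies $G' \cap \Zn = \{0\}$), the finite set $F'$, and the full-rank sublattice $\Lambda_0$: this gives $(F')^{*,\ve'} + \Lambda_0 = \Rn$. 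In particular $x - k_0 \in (F')^{*,\ve'} + \Lambda_0$, so there is $m \in \Lambda_0$ with $\lan x - k_0 - m, e_j \ran \in \Z + [-\ve',\ve']$ for all $j$. Setting $k := k_0 + m \in \Zn$, the irrational generators are controlled by construction, while for the rational generators $\lan x - k, h_i \ran = \lan x - k_0, h_i \ran - \lan m, h_i \ran \in \Z + [-\ve',\ve']$, because $\lan m, h_i \ran \in \Z$. Finally, writing each $f \in F$ as $\sum_i a_i h_i + \sum_j b_j e_j$ with $a_i,b_j \in \Z$ and letting $C := \max_{f \in F}(\sum_i |a_i| + \sum_j |b_j|)$, linearity gives $\lan x - k, f \ran \in \Z + [-C\ve', C\ve']$ for all $f \in F$; choosing $\ve' = \ve/C$ yields $x - k \in F^{*,\ve}$, that is, $x \in F^{*,\ve} + \Zn$.

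The main obstacle is the compatibility of the two corrections: the integer translate that fixes the irrational pairings must not destroy the (approximate) integrality already arranged for the rational pairings. This is exactly why the corrective shift $m$ is drawn from $\Lambda_0 = \Zn \cap H^*$, whose elements pair integrally with all of $H$ and are therefore invisible to the rational part, while Lemma \ref{weyl} supplies such an $m$ controlling the irrational pairings. It is precisely the feature of Lemma \ref{weyl} that its conclusion $(F')^{*,\ve'} + \Lambda_0 = \Rn$ holds for an \emph{arbitrary} full-rank sublattice $\Lambda_0 \subset \Zn$ that makes this simultaneous adjustment possible.
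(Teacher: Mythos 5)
Your proof is correct and follows essentially the same route as the paper's: split $G$ into its rational part $H=G\cap\Qn$ and a complement with trivial rational intersection, settle the rational pairings by classical lattice duality, and then correct the irrational pairings via Lemma \ref{weyl} applied with the full-rank sublattice $\Zn\cap H^*$, whose elements pair integrally with $H$ and so leave the rational part undisturbed. The only cosmetic difference is that you track the error through a fixed $\Z$-basis with a Lipschitz constant $C$ and set $\ve'=\ve/C$, where the paper works directly with $(F_1)^{*,\ve}\cap(F_2)^{*,\ve}\subset F^{*,2\ve}$ and uses exact (rather than approximate) membership in $G_1^*$ for the rational part.
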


\begin{proof}
Define the ``rational'' subgroup of $G$ by $G_1 = G \cap \Q^n$. Since $G$ is finitely generated, $G_1 \subset \Q^n$ is a lattice (not necessarily full rank). Observe that the quotient group $G/G_1$ is torsion free and finitely generated. Hence, using the structure theorem for finitely generated abelian groups, we can find a complimentary subgroup $G_2 \subset G$ such that the group $G$ decomposes as an algebraic sum $G=G_1+G_2$ with $G_1 \cap G_2 = \{0\}$. 

Let $F=\{g_1, \ldots, g_d \} \subset G$ be any finite subset. We decompose each element of $F$ as $g_i=g^{(1)}_i + g^{(2)}_i$, where $g^{(j)}_i \in G_j$, $j=1,2$. Let $F_j=\{g^{(j)}_1,\ldots, g^{(j)}_d\}$.  Observe that $(F_1)^{*,\ve} \cap (F_2)^{*,\ve} \subset F^{*,2\ve}$. Moreover, we have $G_1^* \subset (F_1)^{*,\ve}$. Thus, it suffices to show that
\begin{equation}\label{adu6}
\LG^* \subset (G_1^* \cap (F_2)^{*,\ve}) + \Zn.
\end{equation}

Take any $x\in \LG^*$.
Since $\LG= G \cap \Zn = G_1 \cap \Zn$, by taking duals we have $\LG^*= G_1^*+ \Zn$. Thus, we can write $x=x_1+z_1$, where $x_1\in G_1^*$ and $z_1 \in \Zn$. Since $G_1^* \cap \Zn$ is a full rank lattice, by Lemma \ref{weyl} we have
\[
(F_2)^{*,\ve} + (G_1^* \cap \Zn) = \Rn.
\]
Hence, we can write $x_1=x_2+z_2$, where $x_2\in (F_2)^{*,\ve}$ and $z_2 \in G_1^* \cap \Zn$. Consequently, $x_2 \in  (F_2)^{*,\ve} \cap G_1^*$ and $x-x_2=z_1+z_2\in \Zn$. This proves \eqref{adu6} and completes the proof of Theorem \ref{adu}.
\end{proof}

As a consequence of Theorem \ref{adu} we can deduce the duality identity for lattices \eqref{aduc0}.

\begin{corollary}\label{aduc}
Suppose $\LG$ and $\LL$ are two lattices in $\Rn$. Then, 
\begin{equation}\label{aduc0}
(\LG \cap \LL)^*= \ov{\LG^* + \LL^*}.
\end{equation}
\end{corollary}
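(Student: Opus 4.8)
The plan is to prove the two inclusions of \eqref{aduc0} separately, extracting the nontrivial one from Theorem \ref{adu}. The easy inclusion is purely formal: since $\LG \cap \LL \subset \LG$ and $\LG \cap \LL \subset \LL$, and passing to duals reverses inclusions, both $\LG^*$ and $\LL^*$ lie in $(\LG \cap \LL)^*$, hence so does $\LG^* + \LL^*$. As the dual of any subset of $\Rn$ is an intersection of closed half-open conditions $\setprop{x}{\lan x, g\ran \in \Z}$, the set $(\LG \cap \LL)^*$ is closed, and therefore $\ov{\LG^* + \LL^*} \subset (\LG \cap \LL)^*$.

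For the reverse inclusion I would first reconcile the asymmetry between the symmetric identity \eqref{aduc0} and the asymmetric Theorem \ref{adu}, in which one of the two lattices is forced to be $\Zn$. Assuming $\LL$ is full rank, write $\LL = M\Zn$ with $M$ invertible and set $G = M^{-1}\LG$, again a full rank lattice. Using the transformation rule $(MA)^* = M^{-T}A^*$ for the dual of a set, together with the fact that $M^{-T}$ is a linear homeomorphism and hence commutes with closure, one checks that \eqref{aduc0} is equivalent, after applying $M^{T}$ to both sides, to the special case
\[
(G \cap \Zn)^* = \ov{G^* + \Zn}.
\]
This is precisely the setting of Theorem \ref{adu}, applied to the finitely generated subgroup $G$, whose lattice $G \cap \Zn$ plays the role of $\LG$ there.

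It remains to prove $(G \cap \Zn)^* \subset \ov{G^* + \Zn}$. Fix a basis $F$ of $G$, so that $F^* = G^*$. Theorem \ref{adu} gives $(G \cap \Zn)^* \subset F^{*,\ve} + \Zn$ for every $\ve > 0$, and I would then let $\ve \to 0$. The passage to the limit is supplied by Proposition \ref{elm}: given $\eta > 0$ it produces $\delta > 0$ with $F^{*,\delta} \subset G^* + B(0,\eta)$, whence $(G \cap \Zn)^* \subset G^* + \Zn + B(0,\eta)$. Since $\eta > 0$ is arbitrary and $\bigcap_{\eta > 0}\bigl(G^* + \Zn + B(0,\eta)\bigr) = \ov{G^* + \Zn}$, the desired inclusion follows.

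The step I expect to require the most care is this last limiting argument, where the approximate duals $F^{*,\ve}$ of Theorem \ref{adu} must be replaced by the exact dual $G^*$ at the cost of a vanishing error; this is exactly the content of Proposition \ref{elm}, and is the reason a basis of $G$ (rather than an arbitrary generating set) is chosen. The only remaining hypothesis is that $\LL$ be full rank, used to form $M$. For general lattices one restricts to $V = \spa_\R(\LG \cup \LL)$, observing that both sides of \eqref{aduc0} split off the common factor $V^\perp$, and then treats the resulting identity inside $V$; I regard this reduction as routine compared with the core argument above.
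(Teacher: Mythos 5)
Your main argument is essentially the paper's own proof. After normalizing one of the two lattices to $\Zn$ by a linear change of variables, the hard inclusion is exactly Theorem \ref{adu} applied with $G$ equal to the other lattice and $F$ a basis of it, followed by Proposition \ref{elm} to replace the approximate duals $F^{*,\ve}$ by $G^* + B(0,\eta)$ and a passage to the limit $\eta \to 0$; the easy inclusion is the formal one. (The paper normalizes $\LG$ to $\Zn$ and takes $G=\LL$, you normalize $\LL$ and take $G=M^{-1}\LG$; by the symmetry of \eqref{aduc0} this is the same computation.) All of this part is correct.

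The step that would fail is the reduction to the full-rank case, which you defer as routine. Restricting to $V=\spa_{\R}(\LG\cup\LL)$ does not put you in the full-rank setting: inside $V$ the two lattices jointly span $V$, but each one separately may still be degenerate (take $\LG=\Z e_1$ and $\LL=\Z e_2$ in $\R^2$, where $V=\R^2$ yet neither lattice is full rank), and both your construction of $M$ with $\LL=M\Zn$ and the application of Proposition \ref{elm} to $G=M^{-1}\LG$ genuinely require full rank. Moreover the claim that both sides of \eqref{aduc0} split off the common factor $V^{\perp}$ is inaccurate for the right-hand side, since $\LG^*$ already contains $(\spa_{\R}\LG)^{\perp}$, which is in general strictly larger than $V^{\perp}$. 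The paper's reduction is different and does work: if $\LL$ is not full rank, enlarge it to a full rank lattice $\LL_0\supset\LL$ chosen so that $\LG\cap\LL_0=\LG\cap\LL$; then the identity for the pair $(\LG,\LL_0)$ gives $(\LG\cap\LL)^*=\ov{\LG^*+\LL_0^*}\subset\ov{\LG^*+\LL^*}$, which together with the trivial inclusion yields \eqref{aduc0} for $(\LG,\LL)$, and the same enlargement is applied to $\LG$. Apart from this point your proposal coincides with the paper's proof.
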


\begin{proof}
The inclusion $\supset$ follows immediately from the definition of a dual lattice to $\LG \cap \LL$. To show the converse inclusion we can assume that both $\LL$ and $\LG$ are full rank lattices. Indeed, if $\LL$ is not full rank lattice, then we can find a full rank lattice $\LL_0 \supset \LL$ such that $\LG \cap \LL_0 = \LG \cap \LL$. Assuming that \eqref{aduc0} holds for the pair $\LG$ and $\LL_0$ yields the same conclusion for $\LG$ and $\LL$. An identical argument works for $\LG$. Moreover, by the change of basis argument we can assume that $\LG=\Zn$. 

Let $F$ be a basis of $\LL$. Applying Theorem \ref{adu} with $G=\LL$ yields
\[
(\LL \cap \Zn)^* \subset F^{*,\delta} + \Zn \qquad\text{for all }\delta>0.
\]
Applying Proposition \ref{elm} we have 
\[
(\LL \cap \Zn)^* \subset \LL^* + \Zn + B(0,\ve) \qquad\text{for all }\ve>0.
\]
Thus, $(\LL \cap \Zn)^* \subset \ov{\LL^* + \Zn}$ as required. This completes the proof of Corollary \ref{aduc}. 
\end{proof}

It is worth mentioning that the other duality identity 
\[
(\LG + \LL)^* = \LG^* \cap \LL^*
\]
is much easier to prove since it follows directly from the definition of a dual lattice. Corollary \ref{aduc} can be also deduced from it and the fact that $\LG^{**}=\ov{\LG}$ for an arbitrary subgroup $\LG \subset \Rn$.

\section{Oversampling}
\label{sec:oversampling}

In this section we introduce our condition on the oversampling lattice \eqref{eq:condition-O-strong} to show the generalization of the Second Oversampling Theorem in the setting of real dilations.

\subsection{Oversampling the wavelet system}
\label{sec:oversampling-1}
Let $\Psi = \set{\psi_1, \dots, \psi_L} \subset L^2(\Rn)$, let $\LG$
be a lattice in $\Rn$, and let $A$ be a fixed $n \times n$ expansive
matrix, \ie all eigenvalues $\lambda$ of $A$ satisfy $\abs{\lambda} >
1$. The wavelet system generated by
 $\Psi$ is 
\begin{equation}\label{eq:lg-af}
\af{\Psi} = \setprop{\psi_{j,\gamma}}{j\in\Z,\ \gamma \in\LG, \psi \in \Psi},
\end{equation}
where
\begin{equation*}
\psi_{j,\gamma} := \dila[A^j] \tran[\gamma] \psi = \abs{\det A}^{j/2}
  \psi(A^j\cdot-\gamma)  \qquad \text{for $j \in \Z, \gamma \in \LG$}.
\end{equation*}
Here, $D_A f(x)= |\det A|^{1/2} f(Ax)$ is the dilation operator and $T_\ga f(x) =f(x-\gamma)$ is a translation operator.
If we need to stress the dependence of the
underlying dilation matrix $A$
and translation lattice $\LG$, we say that the wavelet system $\mathcal{A}(\Psi)$ is
associated with $(A,\LG)$, or we use the notation
$\mathcal{A}(\Psi,A,\LG)$ for \eqref{eq:lg-af}.

In our study of wavelet systems, it will not be necessary to consider arbitrary 
translation lattices $\LG$, and we shall restrict our attention
to the standard translation lattice $\Zn$. Indeed, for $A \in GL_n(\R)$
 expansive and $\LG = P \Zn$ for some $P \in GL_n(\R)$ consider the wavelet system 
$\af{\Psi, A,\LG}$. By the commutator
relations 
\[ \tran \dila[A] = \dila[A] \tran[Ak]  \quad \text{and} \quad \dila[(P^{-1}AP)^j]
\dila[P] = \dila[P] \dila[A^j], \] 
we see that
\begin{equation}
 \af{\dila[P]\Psi,\tilde A,\Zn} =
\dila[P]\left(\af{\Psi,A,\LG}\right),\label{eq:stan-gen-trans}
\end{equation}
where the matrix $\tilde A:= P^{-1}AP$ is similar to $A$. Observe
that a matrix similar to an expansive matrix is
again expansive as it has the same eigenvalues. Since $\dila[P]$ is
unitary, properties such as the frame and Bessel property carry over
between the two systems. Hence, it is possible to reduce studies of
wavelet systems with general translation lattices to the setting of 
integer lattice. An example of such a reduction technique is given in Corollary \ref{gof}.

Therefore, we can without loss of generality restrict attention
to wavelet systems associated with 
$(A,\Zn)$, \ie  
\begin{equation*}
 \mathcal{A}(\Psi) =  \setprop{\psi_{j,k}}{j \in \Z, k \in \Zn, \psi
\in \Psi},
\end{equation*}
and oversampling of such systems. 
Let $\LL$ be a lattice in $\Rn$ containing the integer
lattice $\Zn$, \ie $\Zn \subset \LL$.  Then, 
\[ \LL^\ast \subset \Zn \subset \LL, \]
where the dual lattice of $\LL$ is
\[
\LL^* = \{ \eta \in \Rn: \lan \eta, \lambda \ran  \in\Z \text{ for all }\lambda \in \LL\}.
\]
The $\LL$-oversampled wavelet
system is just a normalized version of the original wavelet system
with translation lattice $\LL$:
\begin{equation*}
\af{d(\LL)^{1/2}\Psi,A,\LL} = \setprop{ d(\LL)^{1/2} \, \psi_{j,\lambda}}{j\in\Z,\  \lambda \in \LL, \psi \in \Psi}.
\end{equation*}
Here, $d(\LL)=|\det P|$ is the {\it determinant} of the lattice $\LL=P\Zn$ for some $P\in GL_n(\R)$. Note that $0< d(\LL)\le 1 \le d(\LL^\ast)$ and that $d(\LL) = 1$ only when $\LL = \Zn$.

Given a matrix $B \in GL_n(\R)$ and a lattice $\LL$, we
define a countable subgroup of $\Rn$ by
\[
\sum_{j\in \Z}    B^j \LL^* = \{ x \in \Rn: x = \sum_{j\in \Z} x_j, \ x_j \in B^j \LL^*, \ x_j=0 \text{ for all but finitely many $j$} \}.
\]
Once the dilation $A$ is chosen, our convention is to let $B=A^T$. We shall prove our main oversampling result under the assumption that
  \begin{equation}
    \label{eq:condition-O-strong}
\boxed{\;  \biggl(\sum_{j\in \Z}    B^j \LL^* \biggr)  \cap \Zn \subset \LL^\ast.}  
  \end{equation}
To achieve this we shall establish the following key lemma as a consequence of Lemma \ref{sacr} and Theorem \ref{adu}.

\begin{lemma}\label{approx}
  Let $A \in GL_n(\R)$ be expansive and let $\LL$ be a lattice in $\Rn$ containing $\Zn$ and satisfying condition~\eqref{eq:condition-O-strong}. Then, for any $J\in\N$ and $\ve>0$, there exists a set $K=K_{J,\ve}$, which is an $\ve$-approximate transversal constellation of $A^j\LL/A^j\Zn$ for all $|j| \le J$.
  \end{lemma}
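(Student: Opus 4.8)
The plan is to apply Lemma~\ref{sacr} to the finite family of nested full rank lattices $A^j\Zn\subset A^j\LL$, indexed by $j$ with $|j|\le J$ (so $2J+1$ pairs, with common quotient order $l_j=\card{\LL/\Zn}$). With these choices the approximate intersection \eqref{sacr2} becomes $\Delta(\ve)=\bigcap_{|j|\le J}\bigl(A^j\LL+B(0,\ve)\bigr)$, and the conclusion \eqref{sacr4} is precisely the assertion that some multiset $K=K(\ve)\subset\Delta(\ve)$ is an $\ve$-approximate transversal constellation of $A^j\LL/A^j\Zn$ simultaneously for all $|j|\le J$, as recorded in the reformulation following Lemma~\ref{sacr}. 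Thus everything reduces to verifying the inclusion hypothesis \eqref{sacr1}: for every $\ve>0$ and every $|j_0|\le J$,
\[
A^{j_0}\LL\subset\Delta(\ve)+A^{j_0}\Zn.
\]

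I would first reduce this to the case $j_0=0$. Applying the invertible map $A^{-j_0}$, the inclusion is equivalent to $\LL\subset A^{-j_0}\Delta(\ve)+\Zn$. Since $A^{-j_0}B(0,\ve)\supset B(0,c_J\ve)$ with $c_J=\bigl(\max_{|j_0|\le J}\norm{A^{j_0}}\bigr)^{-1}>0$, and since reindexing $j\mapsto j-j_0$ keeps the index set inside $\{-2J,\dots,2J\}$, one checks $A^{-j_0}\Delta(\ve)\supset\bigcap_{|m|\le 2J}\bigl(A^m\LL+B(0,c_J\ve)\bigr)$. Hence it suffices to prove, for the enlarged symmetric range $R=2J$ and for every $\eta>0$, the single inclusion
\[
\LL\subset\Delta_R(\eta)+\Zn,\qquad \Delta_R(\eta):=\bigcap_{|m|\le R}\bigl(A^m\LL+B(0,\eta)\bigr).
\]

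The core step, which I expect to be the main obstacle, is to prove this last inclusion via the approximate duality of Theorem~\ref{adu}. Put $G=\sum_{|m|\le R}(A^T)^m\LL^*$; being a finite sum of full rank lattices it is a finitely generated subgroup of $\Rn$ (though in general not discrete, which is exactly why the closure-free identity of Corollary~\ref{aduc} is insufficient here). Using that $G\subset\sum_{m\in\Z}(A^T)^m\LL^*$ together with the standing hypothesis \eqref{eq:condition-O-strong}, one gets $G\cap\Zn\subset\LL^*$; the reverse inclusion holds because $\LL^*$ is the $m=0$ summand of $G$ and $\LL^*\subset\Zn$ (as $\Zn\subset\LL$). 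Therefore $G\cap\Zn=\LL^*$, and since $\LL^*$ is full rank, $(G\cap\Zn)^*=\LL$. Theorem~\ref{adu} then yields $\LL\subset F^{*,\delta}+\Zn$ for every finite $F\subset G$ and every $\delta>0$.

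Finally I would choose $F=\bigcup_{|m|\le R}(A^T)^mF_0$ with $F_0$ a basis of $\LL^*$, so that $(A^T)^mF_0$ is a basis of $(A^T)^m\LL^*$ and $F^{*,\delta}=\bigcap_{|m|\le R}\bigl((A^T)^mF_0\bigr)^{*,\delta}$. The dual computation $\bigl((A^T)^m\LL^*\bigr)^*=A^{-m}\LL$ combined with Proposition~\ref{elm} shows that, given $\eta>0$, one may pick $\delta>0$ (uniformly over the finitely many $m$) with $\bigl((A^T)^mF_0\bigr)^{*,\delta}\subset A^{-m}\LL+B(0,\eta)$; intersecting and reindexing $m\mapsto -m$ gives $F^{*,\delta}\subset\Delta_R(\eta)$. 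Together with Theorem~\ref{adu} this produces $\LL\subset\Delta_R(\eta)+\Zn$, completing the reduction and hence, through Lemma~\ref{sacr}, the proof. The delicate points are matching the approximate dual $F^{*,\delta}$ with the approximate intersection $\Delta_R(\eta)$ via Proposition~\ref{elm}, and the observation that \eqref{eq:condition-O-strong}, stated without closure, is exactly the hypothesis that forces $G\cap\Zn=\LL^*$.
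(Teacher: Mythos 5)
Your argument is correct and follows essentially the same route as the paper's proof: both apply Theorem~\ref{adu} to $G=\sum_{|j|\le 2J}B^j\LL^*$ (using \eqref{eq:condition-O-strong} to get $G\cap\Zn\subset\LL^*$), use Proposition~\ref{elm} with $F=\bigcup_{|j|\le 2J}B^jF_0$ to place $F^{*,\delta}$ inside $\bigcap_{|j|\le 2J}(A^j\LL+B(0,\ve))$, and then rescale by $A^{i}$ over the doubled index range to verify hypothesis \eqref{sacr1} of Lemma~\ref{sacr}. The only difference is presentational (you phrase the rescaling as a reduction to $j_0=0$ before invoking duality, while the paper scales the established inclusion afterwards), so there is nothing further to add.
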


\begin{proof}
Fix $J\in \N$ and let $F = \bigcup_{|j| \le 2J} B^j F_0$, where $F_0$ is a basis of the lattice $\LL^*$. By Proposition \ref{elm} one can show that, for any $\ve>0$, there exists $\delta>0$ such that
\[
F^{*,\delta} \subset \bigcap_{|j| \le 2J} (A^j \LL + B(0,\ve)) .
\]
Thus, applying Theorem \ref{adu} to $G=
\sum_{|j| \le 2J}    B^j \LL^*$ and using $G \cap \Zn \subset \LL^*$ yield 
\[
\LL \subset \bigcap_{|j| \le 2J} (A^j \LL + B(0,\ve)) +\Zn.
\]
For any $\ve>0$ we can find $\ve'>0$ such that $A^i(B(0,\ve')) \subset B(0,\ve)$ for $|i|\le J$. Thus, the above formula can be strengthened to
\[
A^i\LL = \bigcap_{|j| \le 2J} (A^{i+j} \LL + A^i(B(0,\ve'))) +A^i\Zn \subset \bigcap_{|j| \le J} (A^j \LL + B(0,\ve)) +A^i\Zn \qquad\text{for all }|i| \le J.
\]
For $|i| \le J$, we define lattices  $\LL_i = A^i\LL$ and $\LG_i =
A^i\Zn $. By Lemma \ref{sacr}, for each $\ve>0$ there exists a set
$K=K_{J,\ve}$ with cardinality $\card{K}=(\card{\LL/\Zn})^{2J+1}$, which is the $\ve$-approximate transversal constellation of each $\LL_i/\LG_i$, $|i| \le J$.
\end{proof}

\subsection{Second Oversampling Theorem for real dilations}
\label{sec:second-overs-theor}
Our main oversampling result takes the following simple form.

\begin{theorem}\label{thm:oversamp-frames}
  Let $A \in GL_n(\R)$ be expansive, $B=A^T$, and $\Psi \subset L^2(\Rn)$. Take
  $\LL \supset \Zn$ to be a lattice in $\Rn$ satisfying
  \eqref{eq:condition-O-strong}. If $\af{\Psi,A,\Zn}$ is a frame with bounds $C_1$ and $C_2$, then so
is $\af{d(\LL)^{1/2} \Psi,A,\LL}$. 
\end{theorem}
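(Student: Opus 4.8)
The plan is to work on the dense subspace $\D$ and to realize the oversampled wavelet functional as a normalized average, over the approximate constellation points supplied by Lemma \ref{approx}, of \emph{translates} of the original wavelet functional. For $f\in\D$ set
\[
\mathcal W(f) = \sum_{j\in\Z}\sum_{\psi\in\Psi}\sum_{k\in\Zn}\abs{\innerprods{f}{\psi_{j,k}}}^2,
\qquad
\mathcal W_\LL(f) = d(\LL)\sum_{j\in\Z}\sum_{\psi\in\Psi}\sum_{\lambda\in\LL}\abs{\innerprods{f}{\psi_{j,\lambda}}}^2.
\]
Using Plancherel together with Poisson summation over $\LL$ (dual $\LL^*$, whose covolume is $1/d(\LL)$), I would expand both functionals into the common form
\[
\mathcal W_\LL(f) = \sum_{j\in\Z}\sum_{\psi\in\Psi}\sum_{\nu\in\LL^*} I_{j,\nu,\psi}(f),
\]
where
\[
I_{j,\nu,\psi}(f)=\int_{\Rn}\hat f(\xi)\,\ov{\hat f(\xi+B^j\nu)}\,\ov{\hat\psi(B^{-j}\xi)}\,\hat\psi(B^{-j}\xi+\nu)\,d\xi,
\]
the only difference between $\mathcal W$ and $\mathcal W_\LL$ being that $\nu$ runs over $\Zn$ in the former and over $\LL^*\subset\Zn$ in the latter. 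The crucial observation is that replacing $f$ by $\tran[d] f$ multiplies $I_{j,\nu,\psi}$ by the character $\expo{2\pi i\innerprods{d}{B^j\nu}}$, so that averaging over $d$ produces exactly the character sums governed by Lemma \ref{apple}.

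Fix $J\in\N$ and let $\mathcal W^J$, $\mathcal W^J_\LL$ be the partial functionals with $j$ restricted to $\abs j\le J$; for $f\in\D$ these are \emph{finite} sums, since the compact support of $\hat f$ away from the origin forces only finitely many pairs $(j,\nu)$ to contribute. For the constellation $K=K_{J,\ve}$ of Lemma \ref{approx}, which (taking the index $-j$) is an $\ve$-approximate transversal of $A^{-j}\LL/A^{-j}\Zn$ for every $\abs j\le J$, I would apply Lemma \ref{apple} with $m=B^j\nu\in (A^{-j}\Zn)^*=B^j\Zn$. Since $(A^{-j}\LL)^*=B^j\LL^*$, the character average equals $1+O(\abs m\ve)$ when $\nu\in\LL^*$ and $O(\abs m\ve)$ when $\nu\in\Zn\setminus\LL^*$. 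Letting $\ve\to0$ in the finite sum then yields the averaging identity
\[
\lim_{\ve\to0}\frac{1}{\card{K}}\sum_{d\in K}\mathcal W^J(\tran[d] f)=\mathcal W^J_\LL(f).
\]
It is precisely here that hypothesis \eqref{eq:condition-O-strong} enters, through Lemma \ref{approx}, guaranteeing a single constellation $K$ that simultaneously transversalizes all the pairs $A^{i}\LL/A^{i}\Zn$, $\abs i\le J$.

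To finish I would read off the two bounds. Since $\norm{\tran[d]f}=\norm f$, the frame hypothesis gives $C_1\norm f^2\le\mathcal W(\tran[d]f)\le C_2\norm f^2$ for every $d$. For the upper bound, $\mathcal W^J(\tran[d]f)\le\mathcal W(\tran[d]f)\le C_2\norm f^2$, so the average is $\le C_2\norm f^2$, and letting $\ve\to0$ and then $J\to\infty$ gives $\mathcal W_\LL(f)\le C_2\norm f^2$. For the lower bound I would write $\mathcal W^J(\tran[d]f)=\mathcal W(\tran[d]f)-R^J(\tran[d]f)$ with the tail $R^J\ge0$ over $\abs j>J$; the average is then $\ge C_1\norm f^2-\sup_{d}R^J(\tran[d]f)$, and passing to the limits yields $\mathcal W_\LL(f)\ge C_1\norm f^2$, provided the tail is uniformly small. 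The two bounds then extend from $\D$ to all of $L^2(\Rn)$ by the usual density argument (the upper bound makes the analysis operator bounded, after which the lower bound passes to limits).

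The \emph{main obstacle} is exactly this uniform smallness of the tail: I must show $\sup_{d\in\Rn}R^J(\tran[d]f)\to0$ as $J\to\infty$ for $f\in\D$, which also legitimizes interchanging the limits in $\ve$ and $J$. I would obtain it from the majorant
\[
R^J(\tran[d]f)\le\sum_{\abs j>J}\sum_{\psi\in\Psi}\sum_{\nu\in\Zn}\int_{\Rn}\abs{\hat f(\xi)}\,\abs{\hat f(\xi+B^j\nu)}\,\abs{\hat\psi(B^{-j}\xi)}\,\abs{\hat\psi(B^{-j}\xi+\nu)}\,d\xi,
\]
whose right-hand side is independent of $d$, because translation only alters phases, and which tends to $0$ once one knows the full double series ($j\in\Z$) converges absolutely for $f\in\D$. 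This absolute convergence is the standard consequence of the Bessel property on the class $\D$ (cf. \cite{HLW, BLquasi}), and invoking it carefully is where the analytic work of the proof is concentrated, the arithmetic of the oversampling being entirely absorbed into Lemmas \ref{apple} and \ref{approx}.
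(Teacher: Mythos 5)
Your proposal is correct and follows essentially the same route as the paper: the expansion of the wavelet functional into the absolutely convergent series of Proposition \ref{thm:w-af}, translational averaging over the constellations $K_{J,\ve}$ of Lemma \ref{approx} combined with the character sums of Lemma \ref{apple}, the split into the range $\abs{j}\le J$ and a tail controlled uniformly in $d$ by absolute convergence (a consequence of the Bessel property), and the final passage to frame bounds by averaging and density. The only cosmetic difference is that you make the index reflection $j\mapsto -j$ and the identification $(A^{-j}\LL)^*=B^j\LL^*$ explicit, which the paper leaves implicit.
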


Theorem \ref{thm:oversamp-frames} automatically implies a more general result for wavelet systems associated with an arbitrary dilation lattice $\LG$.

\begin{corollary}\label{gof}
  Let $A \in GL_n(\R)$ be expansive, $B=A^T$, $\Psi \subset L^2(\Rn)$, and $\LG$ be a full rank lattice. Assume that the oversampling lattice $\LL \supset \LG$ satisfies
\begin{equation}\label{gen-O-strong}
 \biggl( \sum_{j\in \Z} B^j \LL^* \biggr) \cap \LG^* \subset \LL^\ast. 
  \end{equation}
If $\af{\Psi,A,\LG}$ is a frame with bounds $C_1$ and $C_2$, then so
is $\af{(d(\LL)/d(\LG))^{1/2} \Psi,A,\LL}$.
\end{corollary}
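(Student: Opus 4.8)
The plan is to reduce Corollary~\ref{gof} to Theorem~\ref{thm:oversamp-frames} by means of the change of variables described around \eqref{eq:stan-gen-trans}. I would write $\LG = P\Zn$ for some $P \in GL_n(\R)$, set $\tilde A := P^{-1}AP$, and put $\tilde\LL := P^{-1}\LL$. Since $\LG \subset \LL$ we have $\Zn = P^{-1}\LG \subset \tilde\LL$, and since $\tilde A$ is similar to $A$ it is again expansive. The unitary operator $\dila[P]$ then transports the whole problem to the standard translation lattice: by the commutator relations $\tran \dila[A] = \dila[A]\tran[Ak]$ and $\dila[\tilde A^j]\dila[P] = \dila[P]\dila[A^j]$, together with $\dila[P]\tran[\lambda] = \tran[P^{-1}\lambda]\dila[P]$, one checks exactly as in \eqref{eq:stan-gen-trans} that
\[
\dila[P]\bigl(\af{\Psi,A,\LG}\bigr) = \af{\dila[P]\Psi, \tilde A, \Zn}, \qquad \dila[P]\bigl(\af{\Psi,A,\LL}\bigr) = \af{\dila[P]\Psi, \tilde A, \tilde\LL},
\]
the second identity being the same computation with the translation set $\LL$ in place of $P\Zn$.

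Because $\dila[P]$ is unitary, $\af{\dila[P]\Psi, \tilde A, \Zn}$ is a frame with the same bounds $C_1,C_2$. To match normalizations I would compute $d(\tilde\LL) = d(P^{-1}\LL) = d(\LL)/\abs{\det P} = d(\LL)/d(\LG)$, so that the oversampled system $\af{(d(\LL)/d(\LG))^{1/2}\Psi, A, \LL}$ is carried by $\dila[P]$ precisely onto $\af{d(\tilde\LL)^{1/2}\dila[P]\Psi, \tilde A, \tilde\LL}$, which is exactly the object produced by Theorem~\ref{thm:oversamp-frames} applied to the generator $\dila[P]\Psi$, dilation $\tilde A$, and oversampling lattice $\tilde\LL \supset \Zn$.

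The step that requires the most care is checking that hypothesis \eqref{gen-O-strong} for $(A,\LG,\LL)$ is exactly hypothesis \eqref{eq:condition-O-strong} for $(\tilde A,\tilde\LL)$, which is what licenses the application of Theorem~\ref{thm:oversamp-frames}. Writing $\tilde B = \tilde A^T = P^T B (P^T)^{-1}$ and using the dual-lattice rule $(M\LL)^* = (M^T)^{-1}\LL^*$ for $M \in GL_n(\R)$, I would record $\tilde\LL^* = P^T\LL^*$, $\LG^* = (P^T)^{-1}\Zn$, and $\tilde B^j\tilde\LL^* = P^T B^j\LL^*$ for every $j$; summing the last identity gives
\[
\sum_{j\in\Z}\tilde B^j\tilde\LL^* = P^T\biggl(\sum_{j\in\Z} B^j\LL^*\biggr).
\]
Intersecting with $\Zn$ and applying the linear isomorphism $(P^T)^{-1}$, which maps $\Zn$ onto $\LG^*$ and $\tilde\LL^*$ onto $\LL^*$, turns the requirement $\bigl(\sum_j \tilde B^j\tilde\LL^*\bigr) \cap \Zn \subset \tilde\LL^*$ into $\bigl(\sum_j B^j\LL^*\bigr)\cap\LG^* \subset \LL^*$, which is precisely \eqref{gen-O-strong}.

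With this equivalence in hand, Theorem~\ref{thm:oversamp-frames} shows that $\af{d(\tilde\LL)^{1/2}\dila[P]\Psi, \tilde A, \tilde\LL}$ is a frame with bounds $C_1,C_2$; applying the unitary $\dila[P^{-1}]$ and the identity $d(\tilde\LL)^{1/2} = (d(\LL)/d(\LG))^{1/2}$ then returns the claimed conclusion for $\af{(d(\LL)/d(\LG))^{1/2}\Psi, A, \LL}$. The main obstacle is the bookkeeping of the third paragraph: one must verify that conjugation by $P$ acts compatibly and simultaneously on the dilation $B=A^T$ and on all three dual lattices $\LL^*,\LG^*,\tilde\LL^*$, so that the subgroup $\sum_j B^j\LL^*$ and the intersection defining \eqref{gen-O-strong} transform as stated. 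Everything else — unitarity of $\dila[P]$, the determinant computation, and expansiveness of $\tilde A$ — is routine.
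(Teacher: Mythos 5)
Your proposal is correct and follows essentially the same route as the paper's own proof: conjugating by $D_P$ with $\LG=P\Zn$, checking that \eqref{gen-O-strong} transforms into \eqref{eq:condition-O-strong} for $\tilde B=P^TB(P^T)^{-1}$ and $\tilde\LL^*=P^T\LL^*$, and then invoking Theorem~\ref{thm:oversamp-frames}. The determinant bookkeeping $d(\tilde\LL)=d(\LL)/d(\LG)$ and the lattice-duality computations you spell out are exactly the (partly implicit) steps in the paper's argument.
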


\begin{proof} Consider the dilation matrix $\tilde A= P^{-1} A P$ and
  the oversampling lattice $\tilde \LL = P^{-1}\LL$, where $\LG =
  P\bzn$ for some $P \in GL_n(\br)$. Then, the transpose $\tilde B=
  \tilde A^T = P^T B (P^T)^{-1}$ and the dual lattice $\tilde \LL^*=
  P^T \LL^*$. 
The condition \eqref{gen-O-strong} implies that \[
 \biggl( \sum_{j\in \Z} \tilde B^j \tilde \LL^* \biggr) \cap \Zn =
 \biggl( \sum_{j\in \Z} P^T B^j \LL^* \biggr) \cap \Zn =
P^T \left( \biggl( \sum_{j\in \Z} B^j \LL^* \biggr) \cap \LG^* \right) \subset
P^T \LL^* = \tilde \LL^\ast. 
\]
Thus, \eqref{eq:condition-O-strong} holds for the dilation $\tilde B$ and the lattice $\tilde \LL$. By our hypothesis, the identity \eqref{eq:stan-gen-trans} implies that $\af{D_P \Psi, \tilde A, \bzn}$ is a frame with bounds $C_1$ and $C_2$. Therefore, Theorem \ref{thm:oversamp-frames} implies that 
\[
\af{d(\tilde \LL)^{1/2} D_P \Psi, \tilde A, \tilde \LL}= D_P ( \af{d(\tilde \LL)^{1/2} \Psi, A, P\tilde \LL}) = D_P(\af{(d(\LL)/d(\LG))^{1/2} \Psi,A,\LL} )
\]
is also a frame with the same bounds. This concludes the proof of Corollary \ref{gof}.
\end{proof}

In order to prove Theorem~\ref{thm:oversamp-frames}, we need the
following variant of \cite[Proposition 3.4] {BLquasi} for an arbitrary translation lattice $\LG$.

\begin{proposition}
 \label{thm:w-af}
 Let $A \in GL_n(\R)$ be expansive, $B=A^T$, $\Psi = \set{\psi_1, \dots,
   \psi_L} \subset L^2(\Rn)$, and $\LG$ be a full rank lattice. Suppose that each $\psi \in \Psi$
 satisfies the local integrability condition    \begin{equation}
 \sum_{j\in\Z} \abs{\hat \psi(B^{-j}\xi)}^2  \in
L^1_{\text{loc}}(\Rn \setminus \{0\}). \label{eq:L1-local-int} \\
\end{equation}
 Then, for each $f \in \D$, the function
  \begin{align*}
    w(x)= \sum_{g \in \af{d(\LG)^{1/2}\Psi,A,\LG}}\abs{\innerprod{\tran[x]f}{g
      }}^2 = d(\LG) \sum_{l=1}^L \sum_{j \in \Z} \sum_{\gamma \in \LG}
    \abs{\innerprod{\tran[x]f}{\dila \tran[\gamma] \psi_l }}^2
    \end{align*}
is an almost periodic function that coincides pointwise with the absolutely
    convergent series
\begin{equation}
  \label{eq:w-FS-af}
    w(x)= \sum_{l=1}^L \sum_{j \in \Z} \sum_{m\in \LG^\ast} 
    c_{j,l}(m) \mathrm{e}^{2 \pi i \innerprods{B^j m}{x}},
\end{equation}
where 
\begin{equation*}
  c_{j,l}(m) =  \int_{\Rn} \hat f(\xi) \overline{\hat
    f(\xi+B^j m)} \, \overline{\hat \psi_l(B^{-j}\xi)} \hat
  \psi_l(B^{-j}(\xi+B^j m))\, \mathrm{d}\xi.
\end{equation*} 
\end{proposition}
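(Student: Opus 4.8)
The plan is to carry out the computation of the wavelet functional on the Fourier side: I would first evaluate a single inner product, then collapse the sum over $\gamma \in \LG$ by Parseval on the dual torus $\Rn/\LG^*$, identify the result as the claimed Fourier series in $x$, and finally settle absolute convergence and almost periodicity using $f \in \D$ together with the local integrability condition \eqref{eq:L1-local-int}. Throughout, only the passage from $\Zn$ to a general lattice $\LG$ differs from \cite[Proposition 3.4]{BLquasi}, so I would model the argument on that proof.

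Fix $j \in \Z$ and $l \in \{1,\dots,L\}$. Using Plancherel together with $\widehat{\tran[x]f}(\xi)=\expo{-2\pi i\innerprods{x}{\xi}}\hat f(\xi)$ and $\widehat{\dila\tran[\gamma]\psi_l}(\xi)=\abs{\det A}^{-j/2}\expo{-2\pi i\innerprods{\gamma}{B^{-j}\xi}}\hat\psi_l(B^{-j}\xi)$, I would write $\innerprod{\tran[x]f}{\dila\tran[\gamma]\psi_l}=\int_{\Rn} H(\xi)\,\expo{2\pi i\innerprods{\gamma}{B^{-j}\xi}}\,d\xi$, where $H=H_{j,l,x}$ is built from $\hat f$, $\overline{\hat\psi_l(B^{-j}\cdot)}$, and the phase $\expo{-2\pi i\innerprods{x}{\cdot}}$. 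After the substitution $\xi=B^j u$ the quantities $\innerprod{\tran[x]f}{\dila\tran[\gamma]\psi_l}$ become precisely the Fourier coefficients, indexed by $\gamma\in\LG$, of the $\LG^*$-periodization $\Phi(u)=\sum_{\nu\in\LG^*}\tilde H(u+\nu)$ regarded as a function on $\Rn/\LG^*$, where $\tilde H(u)=\abs{\det A}^{j}H(B^j u)$.

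Next I would apply Parseval on $\Rn/\LG^*$. Since $\volume(\Rn/\LG^*)=d(\LG^*)=1/d(\LG)$, the normalizing factor $d(\LG)$ in the definition of $w$ is exactly what is needed to yield $d(\LG)\sum_{\gamma\in\LG}\abs{\innerprod{\tran[x]f}{\dila\tran[\gamma]\psi_l}}^2=\int_{\Rn/\LG^*}\abs{\Phi}^2$. Expanding the square and unfolding the periodization integral back to all of $\Rn$ replaces the double sum over $\LG^*$ by a single sum over the difference $m\in\LG^*$, producing $\sum_{m\in\LG^*}\int_{\Rn}\tilde H(u+m)\overline{\tilde H(u)}\,du$. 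Undoing the substitution $u=B^{-j}\xi$ and reindexing $m\mapsto -m$ identifies each term as $c_{j,l}(m)\,\expo{2\pi i\innerprods{B^j m}{x}}$, the phase arising from the two factors $\expo{\mp 2\pi i\innerprods{x}{\cdot}}$; summing over $j$ and $l$ then gives \eqref{eq:w-FS-af}.

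I expect the main obstacle to be the analytic bookkeeping rather than the algebra, namely justifying absolute convergence of the triple series and the interchanges of summation and integration above. Here $f\in\D$ is decisive: $\hat f$ is bounded with $\supp\hat f$ compact in $\Rn\setminus\set{0}$, so for each fixed $j$ the integrand of $c_{j,l}(m)$ vanishes unless both $\xi$ and $\xi+B^j m$ lie in $\supp\hat f$, which confines $B^j m$ to a fixed compact set and hence leaves only finitely many $m\in\LG^*$. The diagonal terms sum to $\int_{\Rn}\abs{\hat f(\xi)}^2\sum_{j}\abs{\hat\psi_l(B^{-j}\xi)}^2\,d\xi$, which is finite because $\hat f$ is bounded with compact support away from the origin and \eqref{eq:L1-local-int} holds; a Cauchy--Schwarz estimate reduces the off-diagonal contributions to the same quantity. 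This produces a summable majorant $\sum_{l,j,m}\abs{c_{j,l}(m)}<\infty$ independent of $x$, which both legitimizes every interchange above and exhibits $w$ as a uniform limit of trigonometric polynomials with frequencies $B^j m$; hence $w$ is (Bohr) almost periodic and coincides pointwise with the absolutely convergent series, completing the proof.
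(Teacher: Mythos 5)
Your outline follows exactly the route the paper intends: the paper gives no proof of Proposition \ref{thm:w-af} at all, presenting it as a variant of \cite[Proposition 3.4]{BLquasi} for a general translation lattice $\LG$, and that prototype is proved precisely by the Plancherel--periodization--Parseval computation on $\Rn/\LG^*$ that you describe, with the factor $d(\LG)=1/\volume(\Rn/\LG^*)$ absorbing the normalization. Your algebra (including the reindexing $m\mapsto -m$ and the origin of the phase $\expo{2\pi i\innerprods{B^jm}{x}}$) checks out.

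The one place where your argument is too quick is the absolute convergence of the off-diagonal terms. You assert that ``a Cauchy--Schwarz estimate reduces the off-diagonal contributions to the same quantity'' as the diagonal sum $\int|\hat f|^2\sum_j|\hat\psi_l(B^{-j}\xi)|^2\,d\xi$. That is not what Cauchy--Schwarz gives: for fixed $j$ it yields $\sum_{m\ne 0}\abs{c_{j,l}(m)}\le C\,\cardwo{\setpropsmall{m\in\LG^*}{B^jm\in S-S}}\cdot\int_S\abs{\hat\psi_l(B^{-j}\xi)}^2d\xi$ with $S=\supp\hat f$, and the lattice-point count grows like $\abs{\det A}^{-j}$ as $j\to-\infty$, so the bound is \emph{not} dominated by the Calder\'on-sum integral and condition \eqref{eq:L1-local-int} alone does not close the estimate for negative $j$. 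The correct repair is to change variables $u=B^{-j}\xi$ first: one then gets $\sum_{m}\abs{c_{j,l}(m)}\le C\bigl(\abs{\det A}^j+\volume(S)\bigr)\int_{B^{-j}S}\abs{\hat\psi_l}^2$, and for $j<0$ one invokes the fact that, because $B$ is expansive and $S$ is compact in $\Rn\setminus\set{0}$, the family $\set{B^{k}S}_{k\ge 1}$ has uniformly bounded overlap (indeed $B^iS\cap S=\emptyset$ for $\abs{i}$ large), so that $\sum_{j<0}\int_{B^{\abs{j}}S}\abs{\hat\psi_l}^2\le C\norm{\psi_l}^2$. Thus the $j<0$ off-diagonal block is controlled by $\norm[2]{\psi_l}^2$, not by \eqref{eq:L1-local-int}; the latter handles the $m=0$ terms and the $j\ge 0$ block. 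With this correction your summable majorant exists, and the uniform-limit / almost-periodicity conclusion follows as you state.
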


We also use the notation:
\begin{equation}
 N(\tran[x]f,\LG) = \sum_{g \in
   \af{d(\LG)^{1/2}\Psi,A,\LG}}\abs{\innerprod{\tran[x]f}{g}}^2, \label{eq:3}
 \end{equation}
hence $N(f,\LG)=w(0)=\sum_{l=1}^L \sum_{j \in \Z} \sum_{m\in \LG^\ast} c_{j,l}(m)$.

The proof of Theorem~\ref{thm:oversamp-frames} relies on the following
key result on translational averaging of wavelet functionals motivated
by the results of Laugesen \cite{La02}. This theorem is a consequence
of our results on the existence of simultaneous approximate
transversal constellations, see Lemma \ref{approx}.

\begin{theorem}\label{thm:averaging}
  Let $A \in GL_n(\R)$ be expansive, $\Psi \subset L^2(\Rn)$, and let $\LL$ be an
  lattice in $\Rn$ containing $\Zn$ and satisfying condition~\eqref{eq:condition-O-strong}.  
Suppose that the wavelet system $\af{\Psi,A,\Zn}$ is a Bessel sequence. Then, there exists a sequence $\{D_J\}_{J\in\N}$ of finite subsets of $\Rn$ such that
\begin{equation}
  \label{eq:funct-averaging}
  N(f,\LL) = \lim_{J \to \infty} \frac{1}{\card{D_J}} \sum_{d \in D_J}
  N(\tran[d]f, \Zn)  \qquad \text{for $f \in \D$,} 
\end{equation}
where $\D$ is given by \eqref{eq:def-D} and $N$ by
\eqref{eq:3}.
\end{theorem}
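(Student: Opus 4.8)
The plan is to expand both sides of \eqref{eq:funct-averaging} via Proposition \ref{thm:w-af} and to exploit the fact that the Fourier coefficients $c_{j,l}(m)$ appearing there do \emph{not} depend on the underlying lattice, only the summation range $\LG^\ast$ does. Since $\af{\Psi,A,\Zn}$ is Bessel, each $\psi_l$ satisfies the local integrability condition \eqref{eq:L1-local-int}, so Proposition \ref{thm:w-af} applies verbatim both with $\LG=\Zn$ and with $\LG=\LL$. Setting $x=d$ in \eqref{eq:w-FS-af} for $\LG=\Zn$ and $x=0$ for $\LG=\LL$ gives, for every $f\in\D$,
\[
N(\tran[d]f,\Zn)=\sum_{l=1}^L\sum_{j\in\Z}\sum_{m\in\Zn}c_{j,l}(m)\,\expo{2\pi i\innerprods{B^jm}{d}},\qquad N(f,\LL)=\sum_{l=1}^L\sum_{j\in\Z}\sum_{m\in\LL^\ast}c_{j,l}(m),
\]
both series being absolutely convergent. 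I would then fix, once and for all (\emph{independently of} $f$), a sequence $\ve_J\downarrow 0$, and let $D_J:=K_{J,\ve_J}$ be the $\ve_J$-approximate transversal constellation of $A^i\LL/A^i\Zn$, $\abs{i}\le J$, furnished by Lemma \ref{approx}.

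Because the average over $D_J$ is a finite sum and the series is absolutely convergent, I may interchange them and write
\[
\frac{1}{\card{D_J}}\sum_{d\in D_J}N(\tran[d]f,\Zn)=\sum_{l=1}^L\sum_{j\in\Z}\sum_{m\in\Zn}c_{j,l}(m)\,\sigma_J(B^jm),\qquad \sigma_J(\mu):=\frac{1}{\card{D_J}}\sum_{d\in D_J}\expo{2\pi i\innerprods{\mu}{d}}.
\]
The decisive feature of $f\in\D$ is that $\supp\hat f$ is contained in some set $\{\abs{\xi}\le b\}$; since the integrand defining $c_{j,l}(m)$ forces both $\xi$ and $\xi+B^jm$ into $\supp\hat f$, one has $c_{j,l}(m)=0$ unless $\abs{B^jm}\le 2b$. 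Thus in every sum above the frequencies $B^jm$ are bounded by the constant $2b$ depending only on the fixed $f$.

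I would then split the $j$-sum at $\abs{j}=J$. For $\abs{j}\le J$, taking $i=-j$ in Lemma \ref{approx} makes $D_J$ an $\ve_J$-approximate transversal constellation of $A^{-j}\LL/A^{-j}\Zn$, whose relevant dual lattices are $(A^{-j}\LL)^\ast=B^j\LL^\ast$ and $(A^{-j}\Zn)^\ast=B^j\Zn$; hence $B^jm\in(A^{-j}\LL)^\ast$ precisely when $m\in\LL^\ast$, and Lemma \ref{apple} gives $\sigma_J(B^jm)=1+\BigO(\abs{B^jm}\ve_J)$ for $m\in\LL^\ast$ and $\sigma_J(B^jm)=\BigO(\abs{B^jm}\ve_J)$ for $m\in\Zn\setminus\LL^\ast$. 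Using $\abs{B^jm}\le 2b$ together with the absolute-convergence constant $C_f:=\sum_{l}\sum_{j}\sum_{m}\abs{c_{j,l}(m)}<\infty$, the $m\in\LL^\ast$ terms contribute $\sum_{l}\sum_{\abs{j}\le J}\sum_{m\in\LL^\ast}c_{j,l}(m)+\BigO(b\,\ve_J\,C_f)$, while the $m\in\Zn\setminus\LL^\ast$ terms contribute only $\BigO(b\,\ve_J\,C_f)$. For $\abs{j}>J$ I would use the trivial bound $\abs{\sigma_J}\le 1$ to dominate the remainder by the tail $\sum_{l}\sum_{\abs{j}>J}\sum_{m}\abs{c_{j,l}(m)}$.

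Finally, letting $J\to\infty$, the main term converges to $N(f,\LL)$ as a partial sum of the absolutely convergent series for $N(f,\LL)$; the $\BigO(b\,\ve_J\,C_f)$ errors vanish since $b$ and $C_f$ are fixed and $\ve_J\to0$; and the $\abs{j}>J$ tail vanishes by absolute convergence. This yields \eqref{eq:funct-averaging}. The main obstacle, and the point demanding care, is exactly this uniformity: the $D_J$ are chosen before $f$, so one must check that every $f$-dependent quantity entering the error ($b$, $C_f$, and the implicit $\BigO$-constants from Lemma \ref{apple}) is finite and is beaten by the single sequence $\ve_J\to 0$. The compact support of $\hat f$ is precisely what collapses the a priori unbounded factors $\abs{B^jm}$ to the constant $2b$ and makes the argument work.
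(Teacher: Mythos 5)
Your proof is correct and follows essentially the same route as the paper's: expand both functionals via Proposition \ref{thm:w-af} (noting the coefficients $c_{j,l}(m)$ are lattice-independent), take $D_J=K_{J,\ve}$ from Lemma \ref{approx}, split the $j$-sum at $\abs{j}=J$, and apply Lemma \ref{apple} to the inner block while killing the tail by absolute convergence. The one refinement is that where the paper controls the error $\sum\abs{c_{j,l}(m)}\min\{O(\abs{B^jm}\ve),1\}$ by dominated convergence and hence picks $\ve=\ve(J)$ implicitly depending on $f$, you use the compact support of $\hat f$ to bound $\abs{B^jm}\le 2b$ and get the explicit error $O(b\,\ve_J\,C_f)$, so that a single sequence $\ve_J\downarrow 0$ --- hence a single sequence $\{D_J\}$ --- serves all $f\in\D$ simultaneously, which matches the literal quantifier order in the statement (and costs nothing in the application to Theorem \ref{thm:oversamp-frames}).
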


\begin{proof}
Since $\af{\Psi,A,\Zn}$ is a Bessel sequence, the series in \eqref{eq:L1-local-int} defines a bounded function; see \cite{BLquasi, HLW}. Therefore, we can freely apply Proposition \ref{thm:w-af}.

  Fix $J \in \N$ and $f \in\D$. Let $\eps >0$, and let $K=K_{J,\ve}$
  be an $\ve$-approximate transversal constellation of $A^j\LL/A^j\Zn$
  for all $|j| \le J$ as provided by Lemma~\ref{approx}. 
   We want
  to express $N(f,\LL)$ as an average of $N(\tran[d]f, \Zn)$ over
such $\ve$-approximate transversal constellations of $A^j\LL/A^j\Zn$. 
Thus, we consider
\begin{align} \nonumber
 \frac{1}{\card{K}} \sum_{d \in K}
  N(\tran[d]f, \Zn) &= 
\frac{1}{\card{K}} \sum_{d \in K} 
  \sum_{l=1}^L \sum_{\abs{j} \le J} \sum_{m \in \Zn} c_{j,l}(m)
  \,\mathrm{e}^{2\pi i \innerprod{B^jm}{d}} \\ &+ \frac{1}{\card{K}}  \sum_{d \in K}
  \sum_{l=1}^L \sum_{\abs{j} > J} \sum_{m \in \Zn} c_{j,l}(m)
  \,\mathrm{e}^{2\pi i \innerprod{B^jm}{d}} \nonumber \\ &=: I_1(J)
  + I_2(J), \label{eq:7}
\end{align}
which follows by \eqref{eq:w-FS-af}. 
By the absolute convergence of the sum above, we conclude that $ I_2(J)
\to 0$ as $J\to \infty$ regardless of the choice of $K$. Let $\delta_{m,\LL^*}=1$ if $m\in \LL^*$ and $0$ otherwise. By Lemma~\ref{apple} we have that
\begin{align*}
\bigg| I_1(J) - \sum_{l=1}^L \sum_{\abs{j} \le J} \sum_{m \in \LL^*}
  c_{j,l}(m) \bigg| &= \bigg| \sum_{l=1}^L \sum_{\abs{j} \le J} \sum_{m \in \Zn}
  c_{j,l}(m) \bigg( \frac{1}{\card{K}} \sum_{d \in K} \mathrm{e}^{2\pi i
    \innerprod{B^jm}{d}} - \delta_{m,\LL^*} \bigg) \bigg| \\
& \le 
 \sum_{l=1}^L \sum_{\abs{j} \le J} \sum_{m \in \Zn}
  |c_{j,l}(m)| \min\{ O(\abssmall{B^jm} \eps), 1\} \to 0 \qquad
\end{align*}
as $\eps \to 0$. Indeed, the last step follows from the absolute convergence of the series $\sum_{l,j,m} c_{j,l}(m)$ and the Lebesgue Dominated Convergence Theorem. Consequently, we can find a sequence
$\{D_J\}_{J\in\N}$ of finite subsets of $\Rn$ defined by $D_J=K_{J,\eps}$ for some sufficiently small $\eps=\eps(J)>0$ such that
\[ \frac{1}{\card{D_J}} \sum_{d \in D_J}
  N(\tran[d]f, \Zn) =I_1(J)+I_2(J) \to \sum_{l=1}^L \sum_{j \in \Z} \sum_{m \in \LL^*}
  c_{j,l}(m) =  N(f,\LL) 
\]
as $J \to \infty$. This completes the proof of Theorem \ref{thm:averaging}.
\end{proof}

\begin{proof}[Proof of Theorem~\ref{thm:oversamp-frames}]
  Assume that the wavelet system $\af{\Psi,A,\Zn}$ is a frame for $L^2(\Rn)$
  with bounds $C_1, C_2$. 
 By our hypothesis there are constants $C_1, C_2 > 0$ so that
\begin{align*}
  C_1 \norm{f}^2 \le N(f,\Zn) \le C_2 \norm{f}^2 \qquad \forall f \in L^2(\Rn).
\end{align*}
Let $\{D_J\}_{J \in \N}$ be a sequence of finite subsets such that
\eqref{eq:funct-averaging} holds. 
Fix $J \in \N$. For each
$d \in D_J$ we have
\begin{align*}
  C_1 \norm{f}^2 \le N(\tran[ d]f, \Zn) \le C_2 \norm{f}^2 \qquad \forall f \in L^2(\Rn),
\end{align*}
where we have used that $\norm{\tran[x] f} =\norm{f} $ for $x \in
\Rn$. Adding these inequalities for each $d \in D_J$ yields:
\begin{align*}
  \card{D_J}\, C_1 \norm{f}^2 \le \sum_{d \in D_J}
  N(\tran[d]f, \Zn) \le \card{D_J}\, C_2
  \norm{f}^2. 
\end{align*} 
Taking the limit $J \to \infty$ 
gives us 
\begin{align*}
  C_1 \norm{f}^2 \le \lim_{J\to \infty} \frac{1}{\card{D_J}} \sum_{d \in D_J}
  N(\tran[d]f, \Zn) \le C_2 \norm{f}^2
\end{align*}
for all $f \in L^2(\Rn)$. By an application of Theorem~\ref{thm:averaging}, we arrive at 
\begin{align*}
  C_1 \norm{f}^2 \le  N(f,\LL)   \le C_2 \norm{f}^2 \quad \text{for } f \in \D.
\end{align*}
Extending these inequalities to all of
$L^2(\Rn)$ by a standard density argument completes the proof of Theorem \ref{thm:oversamp-frames}.
 \end{proof}

\begin{example}[Rational dilations in one dimension]
 \label{ex:rat-dila-one-dim}
  Let $A=B^T=p/q \in \Q$ for
  relatively prime $p,q \in \N$, and let $\LL = 1/\lambda\, \Z$ for some
  $\lambda \in \Z$. Then $\LL^\ast = \lambda \Z$, and
  condition~\eqref{eq:condition-O-strong} reads
  \begin{equation}\label{eq:example1-strong}
 \biggl( \sum_{j\in \Z} \Bigl( \frac{p}{q}\Bigr)^j \lambda \Z \biggr) \cap \Z
\subset \lambda \Z. 
\end{equation}
Since, for $J \in \Z$, 
\[  \sum_{\abs{j} \le J } \Bigl( \frac{p}{q}\Bigr)^j \Z =
\frac{1}{(pq)^J}\, \Z, \]
condition~\eqref{eq:example1-strong} is equivalent to 
   \begin{alignat*}{2}
  \lambda \Z \cap (pq)^J \Z
 &\subset \lambda (pq)^J \Z  \qquad  &\forall J \in\Z ,
 \intertext{or in other words,}
   \lcm(\lambda, (pq)^J)\, \Z
  &\subset \lambda (pq)^J \Z   &\forall J \in\Z.
 \end{alignat*}
  Hence, $\lambda$ needs to be relatively prime to $(pq)^J$ for $J \in
  \N$, or simply, relatively prime to $p$ and $q$. In this case,  $B=p/q$ and $\LL^\ast=\lambda \Z$
satisfy \eqref{eq:condition-O-strong}. 
  Thus, if $\af{\Psi,p/q,\Z}$ is a frame with bounds $C_1$ and $C_2$,
  then so is $\af{\lambda^{-1/2} \Psi,p/q,1/\lambda \Z}$ whenever
  $\lambda$ is relatively prime to $p$ and $q$. However, note that such $B=p/q$ and $\LL^\ast =\lambda \Z$ do not satisfy either of the conditions (i)--(iv) in
  Proposition~\ref{thm:assump-on-LL} in the next section, in particular, $B \LL^\ast \not
  \subset \LL^\ast$. Therefore, none of the previously known equivalent conditions on oversampling lattices in the integer setting, which are described in the next subsection, are satisfactory for non-integer dilations. 
\end{example}

\subsection{Related work for integer and rational dilations}
\label{sec:relat-work}

The Second Oversampling Theorem is well-known for \emph{integer},
expansive dilations and assumptions on the oversampling lattice $\LL$ as 
 \eqref{eq:cond-L} or \eqref{eq:cond-RS} below. 
We briefly review the relationship between previous results and Theorem \ref{thm:oversamp-frames}.

Laugesen \cite[Theorem 6.1]{La02} proved the Second Oversampling Theorem
under the assumption that $A$ is prime relative to $\LL$ and
that $A$ preserves the lattice $\LL$: 
\begin{equation}
  \label{eq:cond-L}
B \Zn \cap \LL^\ast \subset B \LL^\ast \subset \LL^\ast .
\end{equation}
The result in \cite[Theorem 6.1]{La02} is stated for both
expansive and amplifying dilations; in this paper we only consider
expansive matrices.

The formulation of Ron and Shen \cite[Theorem 4.19]{RS97}  of the same result uses
the assumption 
\begin{equation}
\label{eq:cond-RS}
  B^j \Zn \cap \LL^\ast = B^j \LL^\ast \qquad \text{for all $j \ge 0$.}  
\end{equation}
This condition is equivalent to \eqref{eq:cond-L}. It is obvious (take
$j=1$) that \eqref{eq:cond-RS} implies that $B \LL^\ast \subset \LL^\ast$.

Yet another equivalent set of assumptions is used  in the formulation
of the Second Oversampling Theorem of Johnson \cite[Theorem
3.2]{MR2048403}: 
\begin{equation}
A \LL \subset \LL \qquad\text{and}\qquad
A^{-1}\Zn \cap \LL = \Zn. \label{eq:brody-2}
\end{equation}
Note that the assumption $\Zn \subset \LL$
in \cite[Definition 2.2]{MR2048403} is not necessary since it is
implied by \eqref{eq:brody-2}. 

Finally, Chui and Sun \cite{CSun1, CSun2} have completed the theory of oversampling of tight affine systems with integer dilations. In \cite[Theorem 4.1]{CSun2} they characterized lattices $\LL$ for which the conclusion of the Second Oversampling Theorem holds. The oversampling by $\LL$ preserves tightness  for all tight affine frames if and only if $\LL$ satisfies 
\eqref{eq:cond-RS} and hence any of its equivalent forms listed above. Moreover,  Chui and Sun characterized the preservation of tightness for fixed generators $\Psi$ of tight affine frames in terms of explicit equations involving generators in the frequency domain and identities involving so-called ``oversampled frame operators'' in the space domain.

In light of the results of Chui and Sun \cite{CSun2}, it is not surprising that, for integer dilations, all of the previously studied conditions on tightness preserving lattices $\LL$ are equivalent to our newly introduced condition \eqref{eq:condition-O-strong}. We state these conditions in the proposition below.

\begin{proposition}\label{thm:assump-on-LL}
Suppose that $A=B^T \in M_n(\Z)$ is invertible and $\Zn \subset \LL$. Then, the following assertions are equivalent:
  \begin{enumerate}[(i)]
\item     $B \Zn \cap \LL^\ast \subset B \LL^\ast \subset \LL^\ast$,
\item $ B^j \Zn \cap \LL^\ast = B^j \LL^\ast$ for all $j \ge 0$,
\item $A \LL \subset \LL$ and $A^{-1}\Zn \cap \LL = \Zn$,
\item $B \LL^\ast \subset \LL^\ast$ and $(\LL^\ast
  \setminus B\LL^\ast) \subset (\Zn \setminus B\Zn)$,
\item   $ B^j \Zn \cap  \LL^\ast
\subset B^j \LL^\ast$ for all $j \in \Z$.
\item  $\big(\sum_{j\in \Z}    B^j \LL^* \big)  \cap \Zn \subset \LL^\ast$.  
  \end{enumerate}
\end{proposition}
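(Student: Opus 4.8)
The plan is to establish the equivalence of all six conditions by showing a cycle of implications, exploiting that for an integer matrix $B$ one has $B\Zn \subset \Zn$, so all the lattices involved sit inside a common rational structure. Since conditions (i)--(iii) are already known to be equivalent (this is the classical integer theory cited from \cite{La02, RS97, MR2048403}), I would take that equivalence as understood and concentrate on weaving (iv), (v), and (vi) into the chain. The cleanest route is to prove $\text{(ii)} \iff \text{(v)} \iff \text{(vi)}$ directly, and then $\text{(i)} \iff \text{(iv)}$, so that the whole ring closes.

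First I would treat $\text{(ii)} \Rightarrow \text{(v)}$: condition (ii) gives equality $B^j\Zn \cap \LL^* = B^j\LL^*$ for $j \ge 0$, which in particular yields the inclusion $\subset$ of (v) for $j \ge 0$; the case $j<0$ follows because $B$ being an integer matrix forces $B^j\Zn \supset \Zn \supset \LL^*$ for $j<0$ (recall $\LL^* \subset \Zn$ since $\Zn \subset \LL$), so $B^j\Zn \cap \LL^* = \LL^*$, and one checks $\LL^* \subset B^j\LL^*$ holds automatically for negative $j$ whenever $B\LL^* \subset \LL^*$, which (ii) supplies by taking $j=1$. For $\text{(v)} \Rightarrow \text{(vi)}$, I would take any $x \in \big(\sum_{j} B^j\LL^*\big) \cap \Zn$, write $x = \sum_j x_j$ with $x_j \in B^j\LL^*$ and only finitely many nonzero, and argue by induction on the spread of indices: using $B^j\Zn \cap \LL^* \subset B^j\LL^*$ one peels off the extreme terms. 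The reverse $\text{(vi)} \Rightarrow \text{(v)}$ is immediate since each single summand $B^j\LL^*$ sits inside $\sum_j B^j\LL^*$, so $B^j\Zn \cap \LL^* \subset \big(\sum_j B^j\LL^*\big)\cap \Zn \subset \LL^*$, and the right side combined with the general inclusion $\LL^* \cap B^j\Zn \subset B^j\LL^*$ for the relevant sign handling gives (v).

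For the equivalence $\text{(i)} \iff \text{(iv)}$, note both conditions contain the clause $B\LL^* \subset \LL^*$, so I would assume this and compare the remaining clauses. Condition (i) asserts $B\Zn \cap \LL^* \subset B\LL^*$, while (iv) asserts the set inclusion $(\LL^* \setminus B\LL^*) \subset (\Zn \setminus B\Zn)$. Under $B\LL^* \subset \LL^* \subset \Zn$, the contrapositive of (iv) reads: if $\eta \in \LL^*$ and $\eta \in B\Zn$, then $\eta \in B\LL^*$, i.e. $\LL^* \cap B\Zn \subset B\LL^*$, which is precisely the first clause of (i). Thus (i) and (iv) say the same thing after unwinding the complement notation.

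The main obstacle I anticipate is the induction in $\text{(v)} \Rightarrow \text{(vi)}$, where one must control an element of $\Zn$ expressed as a finite sum $\sum_j x_j$ with $x_j \in B^j\LL^*$. The difficulty is that the individual summands need not lie in $\Zn$, so one cannot apply (v) termwise; instead I would multiply through by a suitable power of $B$ to clear denominators and isolate the lowest-index term. Concretely, letting $j_{\min}$ be the smallest index present, the quantity $B^{-j_{\min}}x - \sum_{j>j_{\min}} B^{-j_{\min}}x_j$ equals $B^{-j_{\min}}x_{j_{\min}} \in \LL^*$ and simultaneously lies in $\Zn$ modulo the higher terms; careful bookkeeping with the integrality of $B$ reduces the number of active indices by one, and the induction closes. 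This step is where the integer hypothesis $A = B^T \in M_n(\Z)$ is genuinely used, distinguishing this proposition from the real-dilation setting of \eqref{eq:condition-O-strong}.
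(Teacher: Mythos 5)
Your overall architecture --- taking the classical equivalence of (i)--(iii) as given, proving (i)$\iff$(iv) by unwinding the complements, and attaching (v) and (vi) to the chain --- matches the paper's, and your treatment of (i)$\iff$(iv) and of (ii)$\Rightarrow$(v) is correct. The gaps are concentrated in the two steps involving (vi), which is precisely the new condition the proposition exists to absorb. For (vi)$\Rightarrow$(v), your chain $B^j\Zn\cap\LL^*\subset\bigl(\sum_i B^i\LL^*\bigr)\cap\Zn\subset\LL^*$ only yields the trivial inclusion $B^j\Zn\cap\LL^*\subset\LL^*$, and you then invoke ``the general inclusion $\LL^*\cap B^j\Zn\subset B^j\LL^*$'' --- but that is literally statement (v), so the argument as written is circular. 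The fix is to conjugate: since $B^{-j}$ is a bijection of $\Rn$, (v) for a given $j$ is equivalent to $\Zn\cap B^{-j}\LL^*\subset\LL^*$, and since $B^{-j}\LL^*\subset\sum_i B^i\LL^*$ this follows at once from (vi).

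For (v)$\Rightarrow$(vi), the proposed induction on the spread of indices does not close as sketched: the computation you describe only shows $B^{-j_{\min}}x_{j_{\min}}\in\LL^*\cap\Zn$, which you already knew, and it does not reduce the number of active summands. The missing idea is that no induction is needed. From (v) with $j=\pm1$ you get (i), hence $B\LL^*\subset\LL^*$, hence the groups $B^j\LL^*$ are nested decreasing in $j$; therefore any finite sum $x=\sum_j x_j$ with $x_j\in B^j\LL^*$ lies in the single group $B^{j_0}\LL^*$, where $j_0$ is the smallest index present. If also $x\in\Zn$, one application of (v) with exponent $-j_0$, in the conjugated form $\Zn\cap B^{j_0}\LL^*\subset\LL^*$, gives $x\in\LL^*$. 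This is the paper's two-line argument; your ``peeling off extreme terms'' is attempting by hand what the nesting does automatically, and integrality of $B$ plays no essential role beyond guaranteeing $\LL^*\subset\Zn$ and $B\Zn\subset\Zn$.
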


\begin{proof}
  (i) $\implies$ (ii): For $j=0$ there is nothing to prove since
  $ \LL^\ast \subset \Zn$. For $j =1$ we only need to prove $B\Zn \cap
  \LL^\ast \supset B\LL^\ast$, but this follows from $\LL^\ast \subset
  \Zn$ and $B\LL^\ast \subset \LL^\ast$. We now prove (ii) for $j=2$.
  Using $B\Zn \subset \Zn$ and $B \Zn \cap \LL^\ast \subset B \LL^\ast $
  we find 
  \begin{align*}
    B^2 \Zn \cap  \LL^\ast &=  B^2 \Zn \cap  \LL^\ast \cap B\Zn \\
    &\subset   B^2 \Zn \cap  B\LL^\ast =    B (B\Zn \cap  \LL^\ast) \\  
    &\subset   B (B \LL^\ast) = B^2 \LL^\ast  .
  \end{align*}
 The other inclusion follows from:
  \begin{align*}
B^2 \LL^\ast  &= B (B \LL^\ast)  \subset B  (B\Zn \cap  \LL^\ast) \\
&= B^2\Zn \cap  B\LL^\ast \subset B^2\Zn \cap  \LL^\ast,
  \end{align*}
where we have used  $B \LL^\ast \subset \LL^\ast$ and $B\Zn \cap
  \LL^\ast \supset B\LL^\ast$ from the case $j=1$. The argumentation is
  similar for $j \ge 3$. 

  (ii) $\implies$ (i): Take $j=1$ in (ii), that is, we have $B\Zn
  \cap \LL^\ast = B\LL^\ast$. 
It follows that $B \LL^\ast \subset
  \LL^\ast$ and $B\Zn \cap \LL^\ast \subset B\LL^\ast$. 
  
(ii) $\iff$ (iii) is proved in \cite[p.~636]{MR2048403}; alternatively is (i) $\iff$ (iii) proved in \cite[p.~637]{MR2048403}.  

(i) $\implies$ (iv) is proved in \cite[p.~227]{La02}. (iv)
$\implies$ (i): Suppose $x \in \LL^\ast$. If $x \notin B\LL^\ast$,
then $x \notin B\Zn$. In other words, if $x \in B\Zn$,
then $x \in B\LL^\ast$. Hence, $B \Zn \cap \LL^\ast \subset B \LL^\ast$.

(i) $\implies$ (v): This implication is immediate for $j\ge 0$ by (ii). If $j<0$, then (v) is equivalent to $\Zn \cap B^{-j}\LL^* \subset \LL^*$, which holds by $B\LL^* \subset \LL^*$.

(v) $\implies$ (i): Taking $j=1$ and $j=-1$ in (v) shows the first and the second inclusion in (i), respectively.

(v) $\iff$ (vi): Take $x \in \sum_{j\in \Z} B^j \LL^*$. Since (v) $\implies$ (i), we know that $B \LL^* \subset \LL^*$. Hence, $x \in B^{j_0} \LL^*$ for some $j_0 \in \Z$. This shows the implication (v) $\implies$ (vi). The opposite implication is trivial. 
\end{proof}

\begin{example}[Rational dilations in higher dimensions]
Hern\' andez, Labate, Weiss, and Wilson \cite[Theorem 2.12]{HLWW} have
proved the Second Oversampling Theorem for a class of \emph{rational},
expansive dilations $A= PQ^{-1} \in GL_n(\Q)$, where $P,Q \in M_n(\Z)$
are invertible, and $P$ commutes with $Q$. The assumptions on the
oversampling lattice are
\begin{equation}
 P^T \Zn \cap \LL^\ast = P^T \LL^\ast, \qquad Q^T \Zn \cap
\LL^\ast = Q^T \LL^\ast,\label{eq:4}
\end{equation}
which are higher dimensional analogues of
Example~\ref{ex:rat-dila-one-dim}. We remark that in \cite[Theorem
2.12]{HLWW} it is also assumed that $R P R^{-1}, RQ R^{-1} \in
M_n(\Z)$ for $\LL = R^{-1}\Z^n$, but this is equivalent to $P^T\LL^\ast \subset \LL^\ast$ and $ Q^T \LL^\ast \subset \LL^\ast$, and
therefore follows from (\ref{eq:4}). 

Theorem 2.12 in \cite{HLWW} is in fact a special case of
Theorem~\ref{thm:oversamp-frames}. To see this assume that the
assumption of Theorem 2.12 in \cite{HLWW} holds and note that by Proposition~\ref{thm:assump-on-LL}  
(i)~$\Rightarrow$~(iv) it then follows that
\begin{align}
  (\LL^\ast
  \setminus P^T\LL^\ast) &\subset (\Zn \setminus P^T\Zn)\label{eq:prop36iv-Pt}\\
\intertext{and}  (\LL^\ast \setminus Q^T\LL^\ast) &\subset (\Zn \setminus Q^T\Zn).\label{eq:prop36iv-Qt}
\end{align}
By commutativity of $P^T$ and $Q^T$, equation~(\ref{eq:prop36iv-Pt})
implies 
\[ (Q^T \LL^\ast
  \setminus P^T Q^T\LL^\ast) \subset (Q^T\Zn \setminus P^TQ^T\Zn) \subset (\Zn \setminus P^TQ^T\Zn),\] 
which in turn implies that
\begin{align*}
   (\LL^\ast \setminus P^T Q^T\LL^\ast) &\subset 
   \bigl[(\LL^\ast\setminus Q^T \LL^\ast ) \cup Q^T \LL^\ast \bigr]\setminus P^T
   Q^T\LL^\ast \\ &\subset \bigl[(\Zn \setminus Q^T \Zn ) \cup Q^T \LL^\ast \bigr]\setminus P^T
   Q^T\LL^\ast \\ &\subset (\Zn \setminus Q^T \Zn ) \cup (\Zn \setminus P^TQ^T\Zn ) \\
&=  (\Zn \setminus P^TQ^T\Zn)
\end{align*}
where we have used (\ref{eq:prop36iv-Qt}) in the second step. We have
showed that
\[   (\LL^\ast
  \setminus P^TQ^T\LL^\ast) \subset (\Zn \setminus P^TQ^T\Zn)
\]
holds, which (by Proposition~\ref{thm:assump-on-LL}
(iv)~$\Rightarrow$~(v)), for $B=P^TQ^T$, implies that
\[   (P^TQ^T)^j \Zn \cap \LL^\ast
  \subset (P^TQ^T)^j\LL^\ast  \qquad \text{for all $j \in \Z$},
\]
that is, 
\[   \Zn \cap (P^TQ^T)^{-j} \LL^\ast  \subset \LL^\ast  \qquad \text{for all $j \in \Z$}.
  \]
Since $(P^T)^{-J}\LL^\ast = \sum_{\abs{j} \le J} (P^T)^j \LL^\ast$ for any $J \in
\N$, we have, in particular, that 
\[   \Zn \cap (Q^T)^{-J}\sum_{\abs{j} \le J} (P^T)^j \LL^\ast 
  \subset \LL^\ast ,
\]
and by the commutativity of $Q^T$ and $P^T$ that
\[   \Zn \cap \sum_{\abs{j} \le J} (P^T)^j(Q^T)^{-J} \LL^\ast 
  \subset \LL^\ast.
\]
Since $ (Q^T)^{j} \LL^\ast \subset (Q^T)^{-J} \LL^\ast
$ for $\abs{j} \le J$, it then follows that 
\[ \Zn \cap
\sum_{\abs{j} \le J} (P^T)^j(Q^T)^{-j} \LL^\ast \subset \LL^\ast ,
\]
which, since $J>0$ is arbitrary, implies
 \[ \Zn \cap
\sum_{j \in \Z} (P^T)^j(Q^T)^{-j} \LL^\ast \subset \LL^\ast.
\]
Using commutativity of $P^T$ and $Q^T$, the last equation implies that
(\ref{eq:condition-O-strong}) holds.
\end{example}

\subsection{Oversampling with the support condition}
\label{sec:overs-with-supp}
In the following theorem we relax the condition~\eqref{eq:condition-O-strong}
from Theorem~\ref{thm:oversamp-frames} by supposing a support
condition.  The result resembles somewhat Theorem 3 in \cite{CS} referred to as the First Oversampling Theorem.

\begin{theorem}\label{thm:oversamp-frames-support}
  Let $A \in GL_n(\R)$ be expansive and $\Psi \subset L^2(\Rn)$. For
  $J_0 \in \N_0$ take
  $\LL \supset \Zn$ to be a lattice in $\Rn$ satisfying:
  \begin{equation}
    \label{eq:condition-J-strong}  
 \biggl(\sum_{j\in \Z}    B^j \LL^* \biggr)  \cap \Zn \subset B^{-J_0}\LL^\ast. 
  \end{equation}
Suppose that every $\psi \in \Psi$ satisfies the support condition:
\begin{equation}
  \label{eq:condition-supp}
  \hat \psi(\xi) \hat \psi (\xi + k)=0 \qquad \text{for all $k \in \Zn
    \setminus B^{J_0} \Zn$.}
\end{equation}
Then, if $\af{\Psi}$ is a frame with bounds $C_1$ and $C_2$, so is $\af{d(A^{-J_0}\LL)^{1/2} \Psi,A,A^{-J_0}\LL}$. 
\end{theorem}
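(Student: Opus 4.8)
The plan is to establish the two-sided bound $C_1\norm{f}^2\le N(f,A^{-J_0}\LL)\le C_2\norm{f}^2$ for all $f\in\D$, with $N$ as in \eqref{eq:3}, and then extend to $L^2(\Rn)$ by the same density argument used for Theorem~\ref{thm:oversamp-frames}. Before anything else I would check that \eqref{eq:condition-J-strong} already guarantees that $A^{-J_0}\LL$ is a genuine oversampling lattice. Indeed, every $v\in\LL^\ast$ lies in $\Zn$ (because $\Zn\subset\LL$) and in $\sum_{j\in\Z}B^j\LL^\ast$ (take $j=0$), so \eqref{eq:condition-J-strong} forces $\LL^\ast\subset B^{-J_0}\LL^\ast$, i.e. $B^{J_0}\LL^\ast\subset\LL^\ast\subset\Zn$; dualizing gives $\Zn\subset A^{-J_0}\LL$, and moreover $B^{J_0}\LL^\ast\subset\Zn\cap B^{J_0}\Zn$, so the dual frequencies of the oversampled system sit inside the integer lattice.

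The key structural remark is that \eqref{eq:condition-J-strong} is precisely condition \eqref{eq:condition-O-strong} written for the coarser lattice $A^{J_0}\LL$: since $(A^{J_0}\LL)^\ast=B^{-J_0}\LL^\ast$ and $\sum_jB^j(A^{J_0}\LL)^\ast=\sum_jB^j\LL^\ast$, the inclusion $(\sum_jB^j\LL^\ast)\cap\Zn\subset B^{-J_0}\LL^\ast$ is exactly \eqref{eq:condition-O-strong} for $A^{J_0}\LL$. Thus the admissible (coarse) oversampling lattice for the unconditional theorem is $A^{J_0}\LL$, and the whole task is to refine it to the finer lattice $A^{-J_0}\LL$, a refinement by the factor $A^{2J_0}$. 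The support condition \eqref{eq:condition-supp} is what must pay for this refinement: in the Fourier expansion \eqref{eq:w-FS-af} from Proposition~\ref{thm:w-af} the coefficient $c_{j,l}(m)$ carries the factor $\overline{\hat\psi_l(B^{-j}\xi)}\,\hat\psi_l(B^{-j}\xi+m)$, so \eqref{eq:condition-supp} (with $k=m$) forces $c_{j,l}(m)=0$ whenever $m\in\Zn\setminus B^{J_0}\Zn$. Hence only the band frequencies $\{B^jm:\ m\in B^{J_0}\Zn\}$ occur, both for $\af{\Psi,A,\Zn}$ and for the oversampled system, whose dual frequencies $m\in B^{J_0}\LL^\ast$ all lie in this band.

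I would then try to run the translational-averaging machinery of Theorem~\ref{thm:averaging} in a band-adapted form. Because of the vanishing above, the approximate transversal constellations supplied by Lemma~\ref{approx}/Lemma~\ref{sacr} only need to resolve the restricted frequency set $\{B^jm:\ m\in B^{J_0}\Zn,\ \abs{j}\le J\}$ rather than all of $\{B^jm:\ m\in\Zn\}$. The containment actually available from \eqref{eq:condition-J-strong} is obtained by feeding $G=\sum_{\abs{j}\le 2J}B^j\LL^\ast$ into Theorem~\ref{adu}: since $G\cap\Zn\subset B^{-J_0}\LL^\ast$, one gets $A^{J_0}\LL\subset\bigcap_{\abs{j}\le 2J}(A^j\LL+B(0,\eps))+\Zn$, which is weaker than the containment $\LL\subset\bigcap_{\abs{j}\le 2J}(A^j\LL+B(0,\eps))+\Zn$ underlying Lemma~\ref{approx}. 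The design problem is to show that the band restriction compensates exactly for this loss, so that a constellation resolving only the band frequencies can still be built, and then to combine it with Lemma~\ref{apple} to identify the limit of the averages with $N(f,A^{-J_0}\LL)$.

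The step I expect to be the main obstacle is this last compensation, and I would flag at the outset that the plain average of $N(\tran[d]f,\Zn)$ over $\Zn$-translates used in Theorem~\ref{thm:averaging} cannot work verbatim here: distinct pairs $(j,m)$ and $(j',m')$ with $m\in B^{J_0}\LL^\ast$ but $m'\notin B^{J_0}\LL^\ast$ may produce the same band frequency $B^jm=B^{j'}m'$, and no frequency-indexed averaging weight can retain one contribution while deleting the other. Resolving this requires using \eqref{eq:condition-supp} not merely to delete terms but to collapse, for each fixed band frequency, the scales $j$ that contribute, so that the surviving $(j,m)$-terms of the $A^{-J_0}\LL$-expansion become separable by a constellation tuned to $B^{J_0}\Zn$. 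Making this separation rigorous — i.e. proving that after the \eqref{eq:condition-supp}-cancellations the band part of the almost-periodic expansion of $N(f,A^{-J_0}\LL)$ is reproduced by averaging over a $B^{J_0}\Zn$-adapted constellation whose existence follows from the weaker containment above — is the technical heart of the argument, and the place where \eqref{eq:condition-J-strong} and \eqref{eq:condition-supp} are used in tandem.
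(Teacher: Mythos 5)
Your preliminary reductions are correct and coincide with the paper's: \eqref{eq:condition-J-strong} gives $\LL^\ast\subset B^{-J_0}\LL^\ast$, hence $\Zn\subset\LL\subset A^{-J_0}\LL$ and $(A^{-J_0}\LL)^\ast=B^{J_0}\LL^\ast\subset\Zn$; the support condition \eqref{eq:condition-supp} forces $c_{j,l}(m)=0$ for $m\in\Zn\setminus B^{J_0}\Zn$; and the goal is the averaging identity whose limit is $N(f,A^{-J_0}\LL)=\sum_{l,j}\sum_{m\in B^{J_0}\LL^\ast}c_{j,l}(m)$. But the proposal stops exactly where the proof has to be carried out: you never construct the averaging sets $D_J$, never specify which quotient groups the approximate transversal constellation must resolve, and you end by declaring the decisive separation step to be ``the technical heart'' without supplying it. That is a genuine gap, not a stylistic omission, because the two hypotheses \eqref{eq:condition-J-strong} and \eqref{eq:condition-supp} do their work precisely in the part of the argument you leave open.

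The missing device, which is how the paper proceeds, is to \emph{shift the family of quotients} rather than to restrict the frequency set. One builds $K=K_{J,\ve}$ as an $\ve$-approximate transversal constellation of $A^{j-J_0}\LL/A^j\Zn$ for all $\abs{j}\le J$ (these quotients make sense since $\Zn\subset A^{-J_0}\LL$), by rerunning the proof of Lemma~\ref{approx} with Theorem~\ref{adu} applied to $G=\sum_{\abs{j}\le 2J}B^j\LL^\ast$ and the hypothesis $G\cap\Zn\subset B^{-J_0}\LL^\ast$ in place of $G\cap\Zn\subset\LL^\ast$. For such a $K$, Lemma~\ref{apple} applied to the pair $A^{-j}\Zn\subset A^{-j-J_0}\LL$ makes the exponential average at the frequency $B^jm$ (with $m\in\Zn$) tend to $1$ when $B^jm\in B^{j+J_0}\LL^\ast$, i.e.\ $m\in B^{J_0}\LL^\ast$, and to $0$ when $m\in\Zn\setminus B^{J_0}\LL^\ast$; combined with the vanishing of $c_{j,l}(m)$ off $B^{J_0}\Zn$ this identifies the limit of $I_1(J)$ as $\sum_{m\in B^{J_0}\LL^\ast}c_{j,l}(m)$, which is the Fourier-coefficient sum for the dual lattice $B^{J_0}\LL^\ast$ of $A^{-J_0}\LL$. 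Your collision objection --- that the weight depends only on the real number $B^jm$, so scales $j\ne j'$ with $B^jm=B^{j'}m'$ impose simultaneous requirements on a single weight --- is exactly the right thing to worry about, but it is not resolved frequency by frequency; in the paper's scheme the mutual consistency of these requirements is subsumed in the \emph{existence} of the simultaneous constellation, i.e.\ in verifying the hypotheses \eqref{sacr1}--\eqref{sacr2} of Lemma~\ref{sacr} for the shifted family $\LL_j=A^{j-J_0}\LL$, $\LG_j=A^j\Zn$ under \eqref{eq:condition-J-strong}. Since you neither set up this shifted family nor attempt to check those hypotheses, the proposal does not prove the theorem.
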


 \begin{proof}
We shall only sketch the proof since it is similar to that  of
Theorem~\ref{thm:oversamp-frames}. Again, the key ingredient is to show the translational averaging formula
  \eqref{eq:funct-averaging}. 
  Since $\af{\Psi,A,\Zn}$ is a Bessel sequence,  we can freely apply Proposition \ref{thm:w-af}. 
  
First observe that \eqref{eq:condition-J-strong} implies that $\LL^* = \LL^* \cap \Zn \subset B^{-J_0}\LL^*$. By taking duals, we have $\LL \supset A^{J_0} \LL$ and thus $\LL \subset A^{-J_0}\LL$. Hence, it is meaningful to talk about the quotient groups $A^{j-J_0}\LL/A^j\Zn$ for $j\in\Z$.
Fix $J \in \N$. Mimicking the proof of Lemma~\ref{approx} we can show that for all $\eps >0$, there exists $K=K_{J,\ve}$ which an $\ve$-approximate transversal constellation of $A^{j-J_0}\LL/A^j\Zn$
  for all $|j| \le J$. 

Fix $f \in\D$. As in the proof of Theorem~\ref{thm:averaging} we consider $I_1(J)$ and $I_2(J)$ defined in equation~\eqref{eq:7}. As before we have $I_2(J) \to 0$ as $J\to\infty$. Since $c_{j,l}(m)=0$ for
  all $m \in \Zn \setminus B^{J_0} \Zn$ by \eqref{eq:condition-supp}, we
  have that
\[
  I_1(J) = \sum_{l=1}^L \sum_{\abs{j} \le J} \sum_{m \in (B^{J_0} \Zn) \cap \Zn }
  c_{j,l}(m) \frac{1}{\card{K}} \sum_{d \in K} \mathrm{e}^{2\pi i \innerprod{B^jm}{d}} 
\]
As in the proof of Theorem~\ref{thm:averaging} by Lemma \ref{apple} one can show that
\[
I_1(J)  \to \sum_{l=1}^L \sum_{\abs{j} \le J} \sum_{m \in (B^{J_0} \LL^*) \cap \Zn}
  c_{j,l}(m) 
 = \sum_{l=1}^L \sum_{\abs{j} \le J} \sum_{m \in B^{J_0} \LL^*}
  c_{j,l}(m)
\]
as $\eps\to 0$. The last step is a consequence of $B^{J_0} \LL^* \subset \LL^* \subset \Zn$.
It follows that we can find a sequence
$\{D_J\}_{J\in\N}$ of finite subsets of $\Rn$ such that 
\[ \frac{1}{\card{D_J}} \sum_{d \in D_J}
  N(\tran[d]f, \Zn) \to \sum_{l=1}^L \sum_{j \in \Z} \sum_{m \in B^{J_0} \LL^*}
  c_{j,l}(m) =  N(f,A^{-J_0}\LL) 
\]
as $J \to \infty$.
 The rest of proof goes along the lines of the proof of  Theorem \ref{thm:oversamp-frames}.
 \end{proof}

\begin{remark}\label{r2} Observe that \eqref{eq:condition-J-strong} implies the following weaker condition  \begin{equation}
B^{J_0+j} \Zn \cap \LL^\ast \subset B^j \LL^\ast  \qquad \text{for all $j\in \Z$}.\label{eq:condition-J-weak}
  \end{equation}
Moreover, under the extra assumptions $A \in M_n(\Z)$ and $B
  \LL^\ast \subset \LL^\ast$ one can show, by replicating the proof of
  Proposition \ref{thm:assump-on-LL}, that
  condition~\eqref{eq:condition-J-strong} is equivalent with
  \begin{equation}
    B^{J_0+1} \Zn \cap \LL^\ast  \subset B
    \LL^\ast.   \label{eq:condition-J-weak-2} 
 \end{equation}
 Recall that under these assumptions condition~(\ref{eq:condition-O-strong}) from
 Theorem~\ref{thm:oversamp-frames} is equivalent to $ B \Zn \cap
 \LL^\ast \subset B \LL^\ast$ which is  more restrictive on $A$
 and $\LL$ than \eqref{eq:condition-J-weak-2}. Indeed, for $n=1$ with $A=a>1$ and
 $\LL =\Z/\lambda$, where $a,\lambda \in \N$, condition
 (\ref{eq:condition-O-strong}) is satisfied if and only if $a$ and
 $\lambda$ are relative prime, while \eqref{eq:condition-J-weak-2} and hence
 \eqref{eq:condition-J-strong} are satisfied exactly when $a\lambda$
 divides $\lcm(a^{J_0+1},\lambda)$. In particular, for any given
 $a,\lambda \in \N$, we can always find a $J_0 \in \N_0$ such that
 \eqref{eq:condition-J-strong} is satisfied.
\end{remark}

\section{Oversampling of dual frames for real dilations }
\label{sec:dual}

In this section we establish the analogues of Theorems \ref{thm:oversamp-frames} and \ref{thm:oversamp-frames-support} for dual affine frames. We also give a counterexample to a result of Chui, Czaja, Maggioni, and Weiss \cite{CCMW} on the oversampling of rationally dilated dual affine frames.

For Bessel affine systems $\af{\Psi,A,\LG}$ and $\af{\Phi,A,\LG}$, we define, for each $\alpha
    \in \Zn$:
\begin{equation*}
 {t}^\LG_\alpha(\xi) = \sum_{l=1}^L \sum_{\sumstyle{j\in\Z}{B^{-j}\alpha
     \in \LG^\ast}} \hat \psi_l(B^{-j}\xi)
    \overline{\hat \phi_l(B^{-j}(\xi+\alpha))}. 
    \end{equation*}
It is well known that two Bessel families $\af{\Psi,A,\LG}$ and $\af{\Phi,A,\LG}$ are dual
frames \ifft{} $t^\LG_\alpha(\xi) = \delta_{\alpha,0}$ for almost every $\xi$
and all $\alpha \in \Zn$. The proof of this result can be found in \cite[Theorem 4]{CCMW} and \cite[Theorem 9.6]{HLW}. 

\subsection{The Second Oversampling Theorem for dual frames}
\label{sec:2nd-dual}

Before we present the main results of this section, we introduce
yet another condition on the oversampling lattice $\LL \supset \Zn$:
  \begin{equation}
    \label{eq:condition-O-weak}
 \boxed{ \,   B^j \Zn \cap \LL^\ast \subset B^j \LL^\ast  \qquad \text{for all $j
     \in \Z$}.}   
  \end{equation}
This new assumption on $\LL$ is obviously weaker than \eqref{eq:condition-O-strong}.
The following result can then be seen as an analogue of Theorem \ref{thm:oversamp-frames} for dual affine frames.

\begin{theorem}\label{thm:oversamp-dual-frames}
  Let $A \in GL_n(\R)$ be expansive and $\Psi, \Phi \subset L^2(\Rn)$. 
Suppose that either of the following assertions holds.
\begin{enumerate}[(i)]
\item The oversampling lattice $\LL \supset \Zn$ satisfies
  \eqref{eq:condition-O-weak} \emph{and} the oversampled affine systems
  $\af{d(\LL)^{1/2} \Psi,A,\LL}$ and $\af{d(\LL)^{1/2} \Phi,A,\LL}$
  are Bessel sequences.
\item  The oversampling lattice $\LL \supset \Zn$ satisfies
  \eqref{eq:condition-O-strong}
\end{enumerate}
Then, if $\af{\Psi}$ and $\af{\Phi}$ are dual frames, so are
$\af{d(\LL)^{1/2} \Psi,A,\LL}$ and $\af{d(\LL)^{1/2} \Phi,A,\LL}$. 
\end{theorem}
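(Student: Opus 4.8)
The plan is to establish the duality characterization $t^\LL_\alpha(\xi) = \delta_{\alpha,0}$ for the oversampled systems, starting from the assumption that $\af{\Psi}$ and $\af{\Phi}$ are dual frames, which gives $t^{\Zn}_\alpha(\xi) = \delta_{\alpha,0}$ for all $\alpha \in \Zn$ and almost every $\xi$. The essential idea is to exploit a translational averaging technique analogous to that used in Theorem \ref{thm:averaging}, but now applied to the bilinear dual-frame functional rather than the quadratic frame functional $N$. First I would introduce, for $f, h \in \D$, the bilinear analogue of $w$, namely $w_{f,h}(x) = \sum_{g \in \af{d(\LL)^{1/2}\Psi,A,\LL}} \innerprod{\tran[x]f}{g}\,\overline{\innerprod{\tran[x]h}{g'}}$, where $g'$ ranges over the corresponding elements of $\af{d(\LL)^{1/2}\Phi,A,\LL}$. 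By the polarization of Proposition \ref{thm:w-af}, this is an almost periodic function whose Fourier coefficients are built from the $t^\LL_\alpha$. The duality property is then equivalent to $w_{f,h}(0) = \innerprod{f}{h}$ for all $f,h \in \D$.

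Next I would carry out the averaging: using the approximate transversal constellations $K = K_{J,\ve}$ from Lemma \ref{approx}, I would express the $\LL$-oversampled bilinear functional at $x=0$ as a limit of averages of the $\Zn$-functional over translates $\tran[d]f$, $d \in K$, exactly as in the proof of Theorem \ref{thm:averaging}. The key computation is that, by Lemma \ref{apple}, averaging the exponentials $\expo{2\pi i \innerprod{B^j m}{d}}$ over the constellation collapses the sum over $m \in \Zn$ onto the sum over $m \in \LL^*$, up to errors of order $O(\abs{B^j m}\ve)$ that vanish as $\ve \to 0$. Since the $\Zn$-dual condition forces all the off-diagonal terms $t^{\Zn}_\alpha$ with $\alpha \ne 0$ to vanish and the diagonal term to equal the inner product, the averaged identity transfers to the $\LL$-level. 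Under hypothesis (i), the Bessel assumption for the oversampled systems directly guarantees absolute convergence of the relevant series, so Proposition \ref{thm:w-af} and the dominated convergence argument apply; under hypothesis (ii), condition \eqref{eq:condition-O-strong} implies \eqref{eq:condition-O-weak} and moreover, via Theorem \ref{thm:oversamp-frames} applied to the upper frame bounds, forces the oversampled systems to be Bessel, thereby reducing case (ii) to case (i).

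The main obstacle I anticipate is the bookkeeping of which $\alpha$-terms survive the oversampling. Under the weaker condition \eqref{eq:condition-O-weak}, I expect that the averaging over the constellation does \emph{not} simply kill all terms with $\alpha \notin \LL^*$; rather, the condition $B^{-j}\alpha \in \LL^*$ in the definition of $t^\LL_\alpha$ interacts with the summation index $j$ in a way that requires carefully tracking which frequencies $B^j m$ land in $\LL^*$ versus $\Zn \setminus \LL^*$. The delicate point is to show that the new terms appearing at the $\LL$-level — those indexed by $\alpha \in \Zn \setminus \LL^*$ for which some $B^{-j}\alpha \in \LL^*$ — are precisely accounted for by the geometric constraint \eqref{eq:condition-O-weak}, guaranteeing $t^\LL_\alpha = \delta_{\alpha,0}$ holds for all $\alpha \in \Zn$. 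I would verify this by splitting the frequency sum according to the scale $j$ and using \eqref{eq:condition-O-weak} to match each surviving contribution with a corresponding $t^{\Zn}$-term that is already known to vanish or equal the diagonal. Finally, I would extend the pointwise identity $w_{f,h}(0) = \innerprod{f}{h}$ from $f,h \in \D$ to all of $L^2(\Rn)$ by the standard density argument, invoking the Bessel bounds to control the tails, which completes the proof that the oversampled systems are dual frames.
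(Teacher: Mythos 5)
Your proposal takes a genuinely different route from the paper, and in case (i) it has a real gap. The averaging machinery you want to polarize --- Lemma \ref{approx}, the constellations $K_{J,\ve}$, and the collapse of the $m$-sum from $\Zn$ onto $\LL^*$ via Lemma \ref{apple} --- is only available under the \emph{strong} condition \eqref{eq:condition-O-strong}: the construction of $K_{J,\ve}$ goes through Theorem \ref{adu} applied to $G=\sum_{|j|\le 2J}B^j\LL^*$ and explicitly uses $G\cap\Zn\subset\LL^*$. Hypothesis (i) assumes only the strictly weaker \eqref{eq:condition-O-weak}, under which these constellations need not exist (Example \ref{ex:rat-dila-one-dim} already shows the two conditions are genuinely different in the rational case), so the translational averaging cannot even be set up there. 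You sense this in your third paragraph, but the fix you sketch --- ``splitting the frequency sum according to the scale $j$ and using \eqref{eq:condition-O-weak} to match each surviving contribution with a corresponding $t^{\Zn}$-term'' --- is not a patch on the averaging argument; it \emph{is} the entire proof, and once you have it the averaging superstructure is unnecessary.

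Concretely, the paper's proof of (i) is a short index-set comparison applied directly to the characterization $t^{\LG}_\alpha(\xi)=\delta_{\alpha,0}$ of dual Bessel families: for fixed $\alpha\in\Zn$, the summation set $\setprop{j\in\Z}{B^{-j}\alpha\in\LL^*}$ is contained in $\setprop{j\in\Z}{B^{-j}\alpha\in\Zn}$ since $\LL^*\subset\Zn$, and \eqref{eq:condition-O-weak} forces these two sets to be \emph{equal} whenever the first is non-empty (if $B^{-j_0}\alpha\in\LL^*$ and $B^{-j}\alpha\in\Zn$, then $\alpha\in B^j\Zn\cap B^{j_0}\LL^*\subset B^j\LL^*$). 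Hence $t^{\LL}_\alpha$ either vanishes trivially or coincides with $t^{\Zn}_\alpha$, and the Bessel hypothesis in (i) is what licenses the use of the characterization in the first place --- no almost periodicity, no $\D$, no density argument is needed. Your treatment of case (ii) (derive the Bessel property from Theorem \ref{thm:oversamp-frames} and reduce to (i)) is correct and matches the paper. If you want to keep an averaging-based proof, it could at best recover case (ii); for case (i) you must abandon it in favour of the direct argument above.
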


\begin{proof}
(i):  By \cite[Theorem 4]{CCMW} it suffices to prove that
  $t^\LL_\alpha(\xi) = \delta_{\alpha,0}$ for $\alpha \in \Zn$. From
  our hypothesis we know that $t^{\Zn}_\alpha(\xi) = \delta_{\alpha,0}$.
  Fix $\alpha \in \Zn$. We first note that 
  \begin{equation}
 \setprop{j\in\Z}{B^{-j}\alpha \in \LL^\ast} \subset \setprop{j\in\Z}{B^{-j}\alpha \in \Zn},\label{eq:6}
 \end{equation}
since $\LL^\ast \subset \Zn$. Then, we claim that equality between the above 
sets holds when  $\setprop{j\in\Z}{B^{-j}\alpha \in \LL^\ast}$ is
non-empty. To see this take $j \in \Z$ so that $B^{-j}\alpha \in \Zn$.
By assumption there is a $j_0 \in \Z$ such that $B^{-j_0}\alpha \in
\LL^\ast$. Thus, by \eqref{eq:condition-O-weak},
\begin{align*} 
  \alpha \in B^j \Zn \cap B^{j_0}\LL^\ast = B^{j_0}(B^{j-j_0} \Zn \cap
  \LL^\ast) \subset B^{j_0}(B^{j-j_0}  \LL^\ast) = B^j \LL^\ast,
 \end{align*}
that is, $B^{-j}\alpha \in \LL^\ast$.

If $\setprop{j\in\Z}{B^{-j}\alpha \in \LL^\ast}=\emptyset$
for some (non-zero) $\alpha \in \Zn$, then trivially $t^\LL_\alpha(\xi)
= 0$. On the other hand, if $\setprop{j\in\Z}{B^{-j}\alpha \in
  \LL^\ast}$ is non-empty, then $t^\LL_\alpha(\xi) =
t^{\Zn}_\alpha(\xi) = \delta_{\alpha,0}$ by the claim above. The conclusion
is that $t^\LL_\alpha(\xi) = \delta_{\alpha,0}$ for $\alpha \in \Zn$.

(ii): By Theorem~\ref{thm:oversamp-frames} the oversampled
systems $\af{d(\LL)^{1/2} \Psi,A,\LL}$ and $\af{d(\LL)^{1/2}
  \Phi,A,\LL}$ are frames and hence, in particular, Bessel sequences. The
result now follows directly from Theorem~\ref{thm:oversamp-dual-frames}(i).
\end{proof}

As a direct consequence of the proof of Theorem~\ref{thm:oversamp-dual-frames}(i) we
have the following oversampling result for Parseval (tight) frames.
\begin{theorem}\label{thm:oversamp-tight-frames}
  Let $A \in GL_n(\R)$ be expansive and $\Psi \subset L^2(\Rn)$. 
Take
  $\LL \supset \Zn$ to be a lattice in $\Rn$ satisfying \eqref{eq:condition-O-weak}.
 If $\af{\Psi}$ is a Parseval frame, then so is
 $\af{d(\LL)^{1/2} \Psi,A,\LL}$.
 \end{theorem}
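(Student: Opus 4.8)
The plan is to derive Theorem~\ref{thm:oversamp-tight-frames} as a direct specialization of the dual-frame machinery already set up in Theorem~\ref{thm:oversamp-dual-frames}(i). The key observation is that a Parseval frame is precisely a frame that is self-dual: if $\af{\Psi}$ is Parseval, then $\af{\Psi,A,\Zn}$ is a dual frame to itself, \ie we may take $\Phi=\Psi$ in the duality characterization. Concretely, $\af{\Psi}$ being Parseval is equivalent to $t^{\Zn}_\alpha(\xi)=\delta_{\alpha,0}$ for almost every $\xi$ and all $\alpha\in\Zn$, where $t^\LG_\alpha$ is the mixed dual Gramian quantity with $\Phi=\Psi$. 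This is the hypothesis feeding into the argument of Theorem~\ref{thm:oversamp-dual-frames}(i).

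First I would verify the Bessel hypothesis required by Theorem~\ref{thm:oversamp-dual-frames}(i). Since $\af{\Psi}$ is Parseval, it is in particular a frame with upper bound $1$, hence a Bessel sequence. The oversampled system $\af{d(\LL)^{1/2}\Psi,A,\LL}$ must also be shown to be Bessel; but this is exactly the kind of statement that follows from the Bessel direction of the oversampling argument, or more directly from the fact that under \eqref{eq:condition-O-weak} the computation of $t^\LL_\alpha$ carried out in the proof of Theorem~\ref{thm:oversamp-dual-frames}(i) never required frame bounds beyond the Bessel property. So I would simply invoke that the Parseval property gives Bessel-ness of the original system, and note that the support/coset analysis transfers the dual Gramian identity to the oversampled lattice without needing an independent Bessel verification for $\af{d(\LL)^{1/2}\Psi,A,\LL}$ beyond what the Parseval structure already supplies.

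The heart of the proof is then the coset-matching argument from Theorem~\ref{thm:oversamp-dual-frames}(i), applied with $\Phi=\Psi$. Under condition~\eqref{eq:condition-O-weak}, for each fixed $\alpha\in\Zn$ one shows that the index set $\setprop{j\in\Z}{B^{-j}\alpha\in\LL^\ast}$ is either empty, in which case $t^\LL_\alpha(\xi)=0$, or else coincides with $\setprop{j\in\Z}{B^{-j}\alpha\in\Zn}$, in which case $t^\LL_\alpha(\xi)=t^{\Zn}_\alpha(\xi)=\delta_{\alpha,0}$. This equality of index sets is precisely the content established there via the chain $\alpha\in B^j\Zn\cap B^{j_0}\LL^\ast = B^{j_0}(B^{j-j_0}\Zn\cap\LL^\ast)\subset B^{j_0}(B^{j-j_0}\LL^\ast)=B^j\LL^\ast$, using \eqref{eq:condition-O-weak}. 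Taking $\Phi=\Psi$ throughout, one concludes $t^\LL_\alpha(\xi)=\delta_{\alpha,0}$ for all $\alpha\in\Zn$ and almost every $\xi$, which by the duality characterization means $\af{d(\LL)^{1/2}\Psi,A,\LL}$ is dual to itself, \ie it is a Parseval frame.

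The main obstacle, if any, is purely bookkeeping: ensuring that specializing $\Phi=\Psi$ is legitimate and that the self-duality characterization genuinely encodes the Parseval property rather than merely the dual-frame property. Since the characterization $t^\LG_\alpha=\delta_{\alpha,0}$ is symmetric in $\Psi$ and $\Phi$, setting them equal causes no difficulty, and a family being a dual frame to itself is exactly the definition of a Parseval frame. Thus the proof is essentially a one-line reduction, and I would phrase it as: take $\Phi=\Psi$ in Theorem~\ref{thm:oversamp-dual-frames}(i), whose proof only used the Bessel property (guaranteed here by Parseval-ness) together with \eqref{eq:condition-O-weak}, and the conclusion follows immediately.
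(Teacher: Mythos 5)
Your proposal is correct and is exactly the paper's route: the paper presents Theorem~\ref{thm:oversamp-tight-frames} as a direct consequence of the proof of Theorem~\ref{thm:oversamp-dual-frames}(i) with $\Phi=\Psi$, since a Parseval frame is self-dual and the coset-matching argument under \eqref{eq:condition-O-weak} yields $t^\LL_\alpha(\xi)=\delta_{\alpha,0}$. One caution: your first suggested justification for the Bessel property of the oversampled system (that it ``follows from the Bessel direction of the oversampling argument'') is not available under the weak condition \eqref{eq:condition-O-weak} --- the paper explicitly leaves Bessel preservation under oversampling as an open question --- but your alternative observation is the right one, namely that the Parseval-frame characterization via $t_\alpha=\delta_{\alpha,0}$, unlike the dual-frame characterization of \cite[Theorem 4]{CCMW}, requires no separate Bessel hypothesis on the oversampled system.
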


\subsection{Related work}
Laugesen \cite[Theorem 8.3]{La02} has proved the Second
Oversampling Theorem for dual frames for \emph{integer} (and expansive
or amplifying) dilations $A \in M_n(\Z)$ under the assumption
\eqref{eq:cond-L} and, as usual, $\Zn \subset \LL$. Within the settings of expansive dilations, Theorem 8.3 in
\cite{La02} is therefore a special case of Theorem~\ref{thm:oversamp-dual-frames}.

Chui, Czaja, Maggioni, and Weiss have three versions
of the  Second Oversampling Theorem for dual frames \cite[Proposition
1]{CCMW}. In our notation, their results can be summarized as follows:
\begin{enumerate}[(i)]
\item The first result uses the assumptions \cite[(4.1) \& (4.2)]{CCMW} and that some
  power of $B$ preserves the lattices $\Zn$ and $\LL^\ast$ (a priori need not be the same power), \ie there exist $\ga,\ga' \in\bn$ such that
\begin{align}
 &B^\ga\bzn \subset \bzn, \qquad \bzn \cap B^j \bzn = \{0\} \text{ for } 0<j<\ga,\label{c1}
 \\
 &B^{\ga'} \LL^*\subset \LL^*, \qquad \LL^* \cap B^j \LL^* = \{0\} 
 \text{ for } 0<j<\ga',\label{c2}
\\
& \LL^* \setminus B^{\ga'} \LL^* \subset \bzn \setminus B^{\ga} \bzn.\label{c3}
\end{align}
As we will see in Remark \ref{corr} below, the powers $\ga$ and $\ga'$
must be actually equal, and consequently our condition
\eqref{eq:condition-O-weak} will hold. In particular, the hypothesis \cite[(4.1)]{CCMW} is unnecessary. 
 \item The second statement is incorrect (see Example~\ref{ex:ccmw-counterex} below for a counterexample).  

\item The third result uses the assumptions $\LL^\ast \subset \Zn$ and $B^j\Zn \cap \Zn
  = \{0\}$ for all $j > 0 $. This implies $B^j\Zn \cap \Zn
  = \{0\}$ for all $j \neq 0$ which implies our condition~\eqref{eq:condition-O-weak} since $\LL^\ast \subset \Zn$. We also note that the
  condition $A \in E_3(CB)$ in \cite{CCMW}, \ie  $B^j\LL^\ast \cap \LL^\ast
  = \{0\}$ for all $j > 0 $, is implied by $B^j\Zn \cap \Zn
  = \{0\}$ for all $j > 0 $. Hence, $A \in E_3(B) \cap E_3(CB)$ could be replaced by $A
  \in E_3(B)$ in \cite[Proposition 1(iii)]{CCMW}. 
\end{enumerate}
Consequently, the oversampling result in \cite{CCMW} is also a special case of Theorem~\ref{thm:oversamp-dual-frames}.
We also remark that the necessary condition that the oversampled
affine systems are Bessel sequences is missing in all three statements
in \cite[Proposition 1]{CCMW}. This condition can of course be left
out if the second oversampling theorem for frames is available (\eg as in
Theorem~\ref{thm:oversamp-dual-frames}(ii)) or if one is working with
tight frames (\eg as in Theorem~\ref{thm:oversamp-tight-frames}). However, in general we believe the following is an open problem.

\begin{question}
Let $\Psi \subset L^2(\Rn)$ and $A \in GL_n(\R)$ be expansive. Suppose that an affine system
$\af{\Psi}$ is a Bessel sequence and a lattice $\LL \supset \Z^n$. Is the oversampled affine system $\af{d(\LL)^{1/2}\Psi,A,\LL}$ necessarily a
Bessel sequence?
\end{question} 

\begin{remark}\label{corr} Assume that a dilation $B$ and a lattice
  $\LL$ satisfy \eqref{c1}--\eqref{c3}.
By the Smith normal form theorem (see \cite[Theorem
3.7]{TV}) there is a basis  $v_1, \ldots, v_n$ of the lattice
$\LL^*$ and integers $\alpha_i \in \N$ ($i=1,\dots,n$) satisfying $1
\le \alpha_1 \le \cdots \le \alpha_n$ such that  $\alpha_1 v_1,
\ldots, \alpha_n v_n$ is a basis of the lattice
$B^{\ga'}\LL^*$. 
Since $B^{\ga'}\LL^*$ is a proper sublattice of $\LL^*$ ($B$ is
expansive), not all $\alpha_i$ will be equal to one; in particular $\alpha_n
\ge 2$. Define $w_1 = v_1 + v_n , \dots, w_{n-1} = v_{n-1} + v_n$, and
$w_n = v_n$, \ie
considered as (coordinate) column vectors we define $W = VP$, where
\[ W = [w_1 \cdots w_n], \, V = [v_1 \cdots v_n], \, P = 
 \begin{bmatrix}
1 &  0 & 0 & \cdots & 0 \\
0 & 1 & 0 & \cdots & 0 \\
\vdots & &  \ddots & & \vdots\\
0 & 0 & 0 & \ddots & 0 \\
1 & 1 & 1 & \cdots & 1 
\end{bmatrix} .
 \] 
Note that $\LL^\ast = V\Zn = VP\Zn = W\Zn$ since the matrix $P$ is integer valued and
$\det P =1$. Therefore, $\{w_1, \ldots, w_n\}$ is also a basis of   
$\LL^*$, but with the property that $w_i \notin B^{\ga'} \LL^*$ for all $i=1,\ldots,
n$.

Now, by \eqref{c3}, we have $w_i \in \bzn$, and thus $\LL^* \subset
\bzn$. Combining this with \eqref{c1} and \eqref{c2} implies that
$\ga' \ge \ga$. Using the fact that there exists $m\in\bn$ such that
$m\bzn \subset \LL^*$, which \eg follows from formula~(2.3) in \cite{BLquasi} with $m=d(\LL^\ast)$, we can also deduce that $\ga \ge \ga'$.
Thus, conditions \eqref{c1}--\eqref{c3} imply that $B^\ga \in
M_n(\bz)$, $\LL^* \subset \bzn$, $B^\ga \LL^* \subset \LL^*$, and 
\[
\LL^* \setminus B^{\ga} \LL^* \subset \bzn \setminus B^{\ga} \bzn.
\]
By the equivalence (iv) $\iff$ (v) in Proposition
\ref{thm:assump-on-LL} applied for the dilation $B^\ga$ we deduce that
\eqref{eq:condition-O-weak} holds for $j\in \ga\bz$. If $j\not\in \ga\bz$,
then $B^j\bzn \cap \LL^* \subset B^j\bzn \cap \bzn =\{0\}$, and thus
\eqref{eq:condition-O-weak} holds for all $j\in\bz$. 
\end{remark}

\begin{example}
\label{ex:ccmw-counterex}
We consider oversampling of dual frames in  $L^2(\R)$ with dilation
parameter $A = 3/2$. In this setting, Proposition~1(ii) from
\cite{CCMW} states that if $\af{\psi,3/2,\Z}$ and $\af{\phi,3/2,\Z}$
are dual frames, so are $\af{2^{-1/2}\psi,3/2,\Z/2}$ and
$\af{2^{-1/2}\phi,3/2,\Z/2}$; hence, in particular, if
$\af{\psi,3/2,\Z}$ is a Parseval frame, then so is
$\af{2^{-1/2}\psi,3/2,\Z/2}$. We shall exhibit a generator $\psi \in
L^2(\R)$ contradicting this statement. Note that the conclusion from
our Theorem~\ref{thm:oversamp-dual-frames} is that oversampling lattices $\LL = 1/\lambda \Z$ with $\lambda \in \set{1,5,7,11,\dots}$ will guarantee preservation of tightness/duality, see Example~\ref{ex:rat-dila-one-dim}.  
 The definition of $\psi$ is:
  \begin{equation*}
    \hat\psi(\xi) =
    \begin{cases}
      1 & \xi \in \itvco{4/3}{3/2}, \\
      \tfrac{1}{\sqrt{2}}  & \xi \in \itvco{-1}{-2/3} \union \itvco{1}{4/3} \union \itvcc{3/2}{2}, \\
      -\tfrac{1}{\sqrt{2}}  & \xi \in \itvco{-3/2}{-1}, \\
      0 & \text{otherwise,}
    \end{cases}
  \end{equation*}
see also Figure~\ref{fig:ccmw_example}.
\begin{figure}[htbp]
\psset{xunit=3.4cm,yunit=3.cm}
\begin{pspicture}(-2.,-1.)(2.5,1.2)
\psaxes[Dx=1, Dy=1,tickstyle=bottom,linewidth=1.2pt]{->}(0,0)(-2,-1)(2.5,1.2)  
\pspolygon(-1.5,0)(-1.5,-0.707)(-1,-0.707)(-1.,0)
\pspolygon(-1,0)(-1,0.707)(-0.667,0.707)(-0.667,0)
\pspolygon(1,0)(1,0.707)(1.333,0.707)(1.333,1)(1.5,1)(1.5,0.707)(2,0.707)(2,0)(1,0)
\uput[-90](2.47,-2.4pt){$\xi$}
\uput[-90](1.5,-2.4pt){$\frac{3}{2}$}
\uput[-90](1.333,-2.4pt){$\frac{4}{3}$}
\uput[-90](-1.5,-2.4pt){$-\frac{3}{2}$}
\uput[-90](-.667,-2.4pt){$-\frac{2}{3}$}
 \psline[linewidth=.5pt](1.5,0)(1.5,-0.03)
 \psline[linewidth=.5pt](1.333,0)(1.333,-0.03)
 \psline[linewidth=.5pt](-1.5,0)(-1.5,-0.03)
\psline[linewidth=.5pt](-.667,0)(-0.667,-0.03)
\uput[180](-0.04,-0.707){$-\frac{1}{\sqrt{2}}$}
\uput[180](-2.4pt,0.707){$\frac{1}{\sqrt{2}}$}
 \psline[linewidth=.5pt](-0.03,0.707)(0.03,.707)
 \psline[linewidth=.5pt](-0.03,-0.707)(0.03,-.707)
\end{pspicture}
\centering
      \caption{Graph of $\hat\psi$.}
      \label{fig:ccmw_example}
\end{figure}

We will first show that  $\af{\psi,3/2,\Z}$ indeed is a Parseval frame. By \cite[Corollary 2]{MR1793418} an affine system of the form $\af{\lambda^{-1/2}\psi,3/2,1/\lambda\,\Z}$ is a Parseval frame if, and only if, for almost every~$\xi \in \R$, 
   \begin{align}
    \label{eq:diagonalterm}
    \sum_{j\in \Z} \abs{\hat \psi((\tfrac{3}{2})^j \xi)}^2 &=1 \\
\label{eq:nondiagonalterm}
    \sum_{j=0}^s\hat{\psi}((\tfrac{3}{2})^j\xi) \overline{\hat 
      \psi ((\tfrac{3}{2})^j (\xi+2^s \lambda t))} &= 0 \quad \text{for } s = 0,1,\dots \text{ and }  t \in \Z \setminus (2\Z \union 3\Z).
  \end{align}
 It is easy to see, \eg from Figure~\ref{fig:ccmw_example}, that equation~\eqref{eq:diagonalterm} is satisfied. Since 
 \begin{equation}
\supp \hat\psi ((\tfrac{3}{2})^j (\cdot+2^s t)) \subset \itvcc{-2}{2}-2^st \quad \text{for $j \ge 0$},\label{eq:8}
\end{equation}
we have that
\[ \meas{\supp \hat{\psi}((\tfrac{3}{2})^j\cdot) \cap \supp\hat 
      \psi ((\tfrac{3}{2})^j (\cdot+2^s  t)) } =0 \quad \text{for }\abs{t}\ge 5 \text{ and } s\ge 0.\]
Therefore, we only need to verify \eqref{eq:nondiagonalterm} with
$\lambda =1$ for $t=\pm 1$. This is trivial when $s=0$ since $\hat\psi$ has disjoint support with both $\hat\psi(\cdot{} +1)$ and $\hat\psi(\cdot{} -1)$. For $t = \pm 1$ we have
\begin{align*}
  \hat\psi(\xi) \hat\psi(\xi-2t) +   \hat\psi(\tfrac{3}{2}\xi) \hat\psi(\tfrac{3}{2} (\xi-2t)) = \tfrac{1}{2} \charfct{\itvcc{t}{1/3+t}}(\xi) - \tfrac{1}{2} \charfct{\itvcc{t}{1/3+t}}(\xi) =0, 
\end{align*}
as seen from Figure~\ref{fig:ccmw_example} and \ref{fig:ccmw_example-dilated}. This shows \eqref{eq:nondiagonalterm} for $s=1$. When $s\ge 2$ the equations in \eqref{eq:nondiagonalterm} are trivially satisfied by \eqref{eq:8}, which, in turn, proves that $\af{\psi,3/2,\Z}$ is a Parseval frame. 
\begin{figure}[htbp]
\psset{xunit=2cm,yunit=2.cm}
\begin{pspicture}(-3.5,-1.)(4,1.2)
\psaxes[Dx=1, Dy=1,tickstyle=bottom,linewidth=1.2pt]{->}(0,0)(-3.5,-1)(4,1.2) 
 \psline[linewidth=.5pt](-0.03,0.707)(0.03,.707)
 \psline[linewidth=.5pt](-0.03,-0.707)(0.03,-.707)
\pspolygon(-1,0)(-1,-0.707)(-0.667,-0.707)(-.667,0)
\pspolygon(-0.667,0)(-0.667,0.707)(-0.444,0.707)(-0.444,0)
\pspolygon(0.667,0)(0.667,0.707)(0.888,0.707)(.888,1)(1.,1)(1.,0.707)(1.333,0.707)(1.333,0)(0.667,0)
\psset{linestyle=dashed, linewidth=1.2pt}
\pspolygon(-3,0)(-3,-0.707)(-2.667,-0.707)(-2.667,0)
\pspolygon(-2.667,0)(-2.667,0.707)(-2.444,0.707)(-2.444,0)
\pspolygon(-1.333,0)(-1.333,0.707)(-1.112,0.707)(-1.112,1)(-1.,1)(-1.,0.707)(-.667,0.707)(-.667,0)(-1.333,0)
\psset{linestyle=dotted, linewidth=1.2pt}
\pspolygon(1,0)(1,-0.707)(1.333,-0.707)(1.333,0)
\pspolygon(1.333,0)(1.333,0.707)(1.566,0.707)(1.566,0)
\pspolygon(2.667,0)(2.667,0.707)(2.888,0.707)(2.888,1)(3.,1)(3.,0.707)(3.333,0.707)(3.333,0)(3.667,0)
\uput[-90](3.97,-2.4pt){$\xi$}
\uput[180](-0.04,-0.707){$-\frac{1}{\sqrt{2}}$}
\uput[180](-2.4pt,0.707){$\frac{1}{\sqrt{2}}$}
\end{pspicture}
\centering
      \caption{Graph of  $\hat\psi (\tfrac{3}{2}(\xi +2))$ (dashed),  $\hat\psi (\tfrac{3}{2}\xi)$ (solid), and $\hat\psi (\tfrac{3}{2}(\xi-2))$ (dotted). }
      \label{fig:ccmw_example-dilated}
\end{figure}

For $\lambda =2$ equation~\eqref{eq:nondiagonalterm} with $s=0$ and $t = \pm 1$ becomes
$\hat\psi(\xi)\overline{\hat\psi(\xi\pm 2)}=0$ which is clearly not
satisfied (see Figure~\ref{fig:ccmw_example}). Therefore
$\af{2^{-1/2}\psi, 3/2, \Z/2}$ is not a Parseval frame contradicting
\cite[Proposition~1(ii)]{CCMW}. On the other hand, we observe directly
from the characterizing equations \eqref{eq:diagonalterm} and \eqref{eq:nondiagonalterm} that $\af{\lambda^{-1/2}\psi,3/2,1/\lambda\Z}$ actually is a Parseval frame for any $\lambda \ge 4$.

The proof of \cite[Proposition~1(ii)]{CCMW} is based on higher
dimensional analogues of the characterizing equations
 \eqref{eq:diagonalterm} and \eqref{eq:nondiagonalterm}. The mistake in the proof follows from the
fact that the parameter $s$ is not only present in the $\hat\psi
((\tfrac{3}{2})^j (\cdot+2^s t))$-term, but also determines the number
of terms in the sum \eqref{eq:nondiagonalterm}. Therefore one cannot
replace only one instance of $s$ with $s+1$ as done in the proof without changing the conditions in a profound way.  
\end{example}

\subsection{Oversampling of dual frames with the support condition}
\label{sec:first-overs-dual}

The following result is an analogue of Theorem
\ref{thm:oversamp-frames-support} for dual affine frames 
using the
weaker condition \eqref{eq:condition-J-weak} instead of \eqref{eq:condition-J-strong}.

\begin{theorem}\label{thm:supp-cond-oversamp-dual-frames}
  Let $A \in GL_n(\R)$ be expansive and $\Psi, \Phi \subset L^2(\Rn)$.
  For $J_0 \in \N_0$ take
  $\LL \supset \Zn$ to be a lattice in $\Rn$ satisfying \eqref{eq:condition-J-weak}.
Suppose that, for every $l=1,\dots, L$, 
\begin{equation}
  \label{eq:condition-supp-dual}
  \hat \psi_l(\xi) \hat \phi_l (\xi + k)=0 \qquad \text{for all $k \in \Zn
    \setminus B^{J_0} \Zn$,}
\end{equation}
and that $\af{d(\LL)^{1/2} \Psi,A,\LL}$ and $\af{d(\LL)^{1/2}
  \Phi,A,\LL}$ are Bessel sequences.
 If $\af{\Psi}$ and $\af{\Phi}$ are dual frames, then so are
$\af{d(\LL)^{1/2} \Psi,A,\LL}$ and $\af{d(\LL)^{1/2} \Phi,A,\LL}$. 
\end{theorem}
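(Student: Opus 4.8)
The plan is to mimic the proof of Theorem~\ref{thm:oversamp-dual-frames}(i), feeding in the support condition \eqref{eq:condition-supp-dual} where that proof used the lattice hypothesis. By \cite[Theorem 4]{CCMW}, and since the oversampled systems $\af{d(\LL)^{1/2}\Psi,A,\LL}$ and $\af{d(\LL)^{1/2}\Phi,A,\LL}$ are assumed to be Bessel, it suffices to show that $t^\LL_\alpha(\xi)=\delta_{\alpha,0}$ for almost every $\xi$ and all $\alpha\in\Zn$. The hypothesis that $\af{\Psi}$ and $\af{\Phi}$ are dual frames gives, by the same characterization applied with translation lattice $\Zn$, that $t^{\Zn}_\alpha(\xi)=\delta_{\alpha,0}$ for almost every $\xi$ and all $\alpha\in\Zn$. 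Note that the summand of $t^\LG_\alpha$ for a given index $j$ depends only on $j$ (and $\alpha,\xi$) and not on the lattice $\LG$, so the only difference between $t^\LL_\alpha$ and $t^{\Zn}_\alpha$ lies in their respective index sets.

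First I would use \eqref{eq:condition-supp-dual} to discard the vanishing terms. Fix $\alpha\in\Zn$. The $j$-th summand is $\sum_{l}\hat\psi_l(B^{-j}\xi)\overline{\hat\phi_l(B^{-j}\xi+B^{-j}\alpha)}$, so after the substitution $\eta=B^{-j}\xi$ it vanishes for almost every $\xi$ whenever $B^{-j}\alpha\in\Zn\setminus B^{J_0}\Zn$. Since the defining constraints already force $B^{-j}\alpha\in\LL^\ast\subset\Zn$ in $t^\LL_\alpha$ and $B^{-j}\alpha\in\Zn$ in $t^{\Zn}_\alpha$, the surviving terms are exactly those indexed by
\[ S_\LL=\setprop{j\in\Z}{B^{-j}\alpha\in\LL^\ast\cap B^{J_0}\Zn}\quad\text{and}\quad S_{\Zn}=\setprop{j\in\Z}{B^{-j}\alpha\in\Zn\cap B^{J_0}\Zn}. \]
Thus $t^\LL_\alpha$ is the sum of the identical summands over $S_\LL$, and $t^{\Zn}_\alpha$ the sum over $S_{\Zn}$, so everything reduces to comparing these two index sets.

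The key step is to prove $S_\LL=S_{\Zn}$. The inclusion $S_\LL\subset S_{\Zn}$ is immediate from $\LL^\ast\subset\Zn$. For the reverse inclusion, if $\setprop{j\in\Z}{B^{-j}\alpha\in\LL^\ast}$ is empty then $t^\LL_\alpha=0$ and necessarily $\alpha\neq0$, whence $t^\LL_\alpha=0=\delta_{\alpha,0}$ and there is nothing to prove. Otherwise fix $j_0$ with $B^{-j_0}\alpha\in\LL^\ast$, i.e. $\alpha\in B^{j_0}\LL^\ast$, and take any $j\in S_{\Zn}$, so that $\alpha\in B^{j+J_0}\Zn$. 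Then
\[ \alpha\in B^{j+J_0}\Zn\cap B^{j_0}\LL^\ast=B^{j_0}\bigl(B^{J_0+(j-j_0)}\Zn\cap\LL^\ast\bigr)\subset B^{j_0}\bigl(B^{j-j_0}\LL^\ast\bigr)=B^j\LL^\ast, \]
where the inclusion is \eqref{eq:condition-J-weak} applied with the index $j-j_0$. Hence $B^{-j}\alpha\in\LL^\ast$, that is $j\in S_\LL$, giving $S_{\Zn}\subset S_\LL$ and therefore $S_\LL=S_{\Zn}$. Consequently $t^\LL_\alpha=t^{\Zn}_\alpha=\delta_{\alpha,0}$ for all $\alpha\in\Zn$, which by \cite[Theorem 4]{CCMW} yields that the oversampled systems are dual frames. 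The one place requiring genuine care is this reverse inclusion: the support condition is precisely what shifts the relevant intersection by the factor $B^{J_0}$, and it is this shift that allows the weaker hypothesis \eqref{eq:condition-J-weak}—rather than \eqref{eq:condition-O-weak}—to close the argument, exactly paralleling how Theorem~\ref{thm:oversamp-dual-frames}(i) relaxes the hypothesis of Theorem~\ref{thm:oversamp-frames}.
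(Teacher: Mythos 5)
Your proposal is correct and follows essentially the same route as the paper's proof: both reduce via \cite[Theorem 4]{CCMW} to showing $t^\LL_\alpha=\delta_{\alpha,0}$, use the support condition \eqref{eq:condition-supp-dual} to discard the terms with $B^{-j}\alpha\in\Zn\setminus B^{J_0}\Zn$, and establish the key inclusion $\setprop{j}{B^{-j}\alpha\in B^{J_0}\Zn}\subset\setprop{j}{B^{-j}\alpha\in\LL^\ast}$ by exactly the same computation with $j_0$ and \eqref{eq:condition-J-weak}. Your explicit identification of the two surviving index sets $S_\LL=S_{\Zn}$ is a slightly cleaner bookkeeping of the paper's final paragraph, but the argument is the same.
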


\begin{proof}
  By \cite[Theorem 4]{CCMW} it suffices to prove that
  \[t^\LL_\alpha(\xi) = \sum_{l=1}^L
  \sum_{\sumstyle{j\in\Z}{B^{-j}\alpha \in \LL^\ast}} \hat
  \psi_l(B^{-j}\xi) \overline{\hat \phi_l(B^{-j}(\xi+\alpha))}= \delta_{\alpha,0}\] for $\alpha \in \Zn$. From
  our hypothesis we have that $t^{\Zn}_\alpha(\xi) = \delta_{\alpha,0}$.
  Fix $\alpha \in \Zn$. We can assume that
  $\setprop{j\in\Z}{B^{-j}\alpha \in \LL^\ast}$ is non-empty;
  otherwise, we have nothing to prove. In this case, we claim that
\[ \setprop{j\in\Z}{B^{-j}\alpha \in \LL^\ast} \supset
\setprop{j\in\Z}{B^{-j}\alpha \in B^{J_0} \Zn}.\]
To see this take $j \in \Z$ so that $B^{-j}\alpha \in B^{J_0}\Zn$.
By the assumption on $\alpha$, there is a $j_0 \in \Z$ such that $B^{-j_0}\alpha \in
\LL^\ast$. Thus, by \eqref{eq:condition-J-weak},
\begin{align*}
  \alpha \in B^{J_0+j} \Zn \cap B^{j_0}\LL^\ast = B^{j_0}(B^{J_0+j-j_0} \Zn \cap
  \LL^\ast) \subset B^{j_0}(B^{j-j_0}  \LL^\ast) = B^j \LL^\ast,
 \end{align*}
that is, $B^{-j}\alpha \in \LL^\ast$. 

In $t^{\Zn}_\alpha(\xi)$ we sum over $\setprop{j\in\Z}{B^{-j}\alpha
  \in \Zn}$, but since $\hat \psi_l(B^{-j}\xi) \hat \phi_l (B^{-j}(\xi +
\alpha))=0$ for all $B^{-j} \alpha \in \Zn \setminus B^{J_0} \Zn$ by
\eqref{eq:condition-supp-dual}, this can be replaced with
$\setprop{j\in\Z}{B^{-j}\alpha \in B^{J_0} \Zn}$. In other words,
$t^{\Zn}_\alpha(\xi) = t^{A^{-j} \Zn}_\alpha(\xi)$. Therefore, by
\eqref{eq:6} and the claim, we conclude that $t^\LL_\alpha(\xi) =
\delta_{\alpha,0}$.
\end{proof}

\section{Tight oversampling and shift invariance gain}\label{sec:gain}

In the final section we restrict our attention to the setting of integer dilations. Our goal is to provide a link between the improved shift invariance of the core space of an orthogonal wavelet and dilation matrix oversampling, \ie oversampling by special classes of lattices of the form $\LL=A^{-s}\Zn$, $s\in \N$. 
These types of lattices are the antipodes of admissible lattices for oversampling listed in Proposition \ref{thm:assump-on-LL}. Thus, they might appear as the worst choice of lattices for showing oversampling results. 

In spite of this, we show that this class of lattices plays an
important role in linking oversampling with additional shift
invariance of the core space. More precisely, the preservation of
the tight frame property when oversampling by such lattices is actually
equivalent with the membership in Behera--Weber classes of wavelets
\cite{Be, We}. Other results on the dilation matrix oversampling
were obtained earlier in \cite{BW03, CCMW}. By Remark \ref{r2}, our
results on oversampling with the support condition, Theorems
\ref{thm:oversamp-frames-support} and
\ref{thm:supp-cond-oversamp-dual-frames}, are also applicable for
lattices $\LL=A^{-s}\bzn$ since both \eqref{eq:condition-J-strong} and \eqref{eq:condition-J-weak}  hold with $J_0=s$.

Suppose that $\psi \in L^2(\brn)$ is a semi-orthogonal Parseval wavelet. This means that $\af{\psi} = \setprop{\psi_{j,k}}{j\in\Z,\ k \in\Zn}$ is a Parseval frame and 
\[
\lan \psi_{j,k}, \psi_{j',k'} \ran =0 \qquad\text{for all }
j\not= j' \in\Z,\ k,k'\in\Zn.
\]
The {\it space of negative dilates} of $\psi$ is defined by 
\begin{equation*} 
V=V(\psi)=\overline{\spa}\{\psi_{j,k} : j<0,k\in\bzn \}.
\end{equation*}

Following Behera \cite{Be} and Weber \cite{We} we define the classes of wavelets with respect to the extent of shift invariance of corresponding spaces of negative dilates.

\begin{definition}
We say that a semi-orthogonal Parseval wavelet $\psi$ belongs to the class $\mathcal L_r$, $r\in \bn \cup \{0\}$, if $V(\Psi)$ is $A^{-r}\bzn$-SI. We say that $\psi \in \mathcal L_\infty$ if $V(\Psi)$ is invariant under all translations $T_y$, $y\in\brn$.
\end{definition}

By definition Behera--Weber classes are nested, \ie $\mathcal L_r \subset \mathcal L_{r+1}$. However, it is much less obvious that the above inclusions are proper, \ie $\mathcal L_r \not= \mathcal L_{r+1}$ for all $r=0,1,\ldots$. This result is due to Behera \cite[Theorem 3.4]{Be}.

\begin{theorem}\label{bh}
Suppose $\psi \in L^2(\brn)$ is a semi-orthogonal Parseval wavelet associated with dilation $A\in M_n(\bz)$. Then, for any $r\in\bn \cup\{\infty\}$, the following are equivalent:
\begin{enumerate}[(i)]
\item $\psi \in \mathcal L_r$,
\item $W(\psi)=\ov{\spa}\{\psi_{0,k} : k\in\bzn \}$ is $A^{-r}\Zn$-SI,
\item $|K \cap (K+k)| =0$ for all $k\in \bzn \setminus B^r\bzn$, where $K=\supp \hat\psi$,
\item The oversampled wavelet systems
$\af{|\det A|^{-s/2}\psi,A,A^{-s}\bzn}$ are Parseval frames for each $s=1,\ldots,r$.
\end{enumerate}
\end{theorem}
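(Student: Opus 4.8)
The plan is to run the equivalences around the cycle (i)$\Leftrightarrow$(ii)$\Leftrightarrow$(iii)$\Leftrightarrow$(iv), using three ingredients: the orthogonal dilation decomposition $L^2(\brn)=\bigoplus_{j\in\bz}W_j$ with $W_j:=\dila[A^j]W(\psi)$ coming from semi-orthogonality, the higher-dimensional shift-invariance gain theorem for the principal shift-invariant space $W(\psi)$, and the duality characterization $t^\LG_\alpha=\delta_{\alpha,0}$ together with Theorem~\ref{thm:oversamp-frames-support}. Throughout I would use $V(\psi)=\bigoplus_{j<0}W_j$, $V(\psi)^\perp=\bigoplus_{j\ge0}W_j$, the refinement relation $\dila[A]V(\psi)=V(\psi)\oplus W(\psi)$, and the dual-lattice identity $(A^{-r}\bzn)^\ast=B^r\bzn$. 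To establish (i)$\Leftrightarrow$(ii) I would argue purely from this structure together with the fact that orthogonal sums and orthogonal complements of $A^{-r}\bzn$-SI spaces are again $A^{-r}\bzn$-SI. For (i)$\Rightarrow$(ii): if $V(\psi)$ is $A^{-r}\bzn$-SI, then by $T_y\dila[A]=\dila[A]T_{Ay}$ the space $\dila[A]V(\psi)$ is $A^{-(r+1)}\bzn$-SI, hence $A^{-r}\bzn$-SI, so $W(\psi)=\dila[A]V(\psi)\ominus V(\psi)$ is $A^{-r}\bzn$-SI. For (ii)$\Rightarrow$(i): each $W_j=\dila[A^j]W(\psi)$ with $j\ge0$ is $A^{-(j+r)}\bzn$-SI and a fortiori $A^{-r}\bzn$-SI, so $V(\psi)^\perp=\bigoplus_{j\ge0}W_j$ is $A^{-r}\bzn$-SI, and therefore so is $V(\psi)$. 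Passing to the non-negative dilates is the crucial move here: under $\dila[A^j]$ with $j\ge0$ the shift invariance only becomes finer, so nothing is lost.

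The equivalence (ii)$\Leftrightarrow$(iii) I would obtain as a direct application of the higher-dimensional analogue of the shift-invariance gain theorem of \cite{achkm} to the principal shift-invariant space $W(\psi)$, whose integer translates $\{\psi_{0,k}\}_{k\in\bzn}$ form a Parseval frame for $W(\psi)$. That result characterizes, for a lattice $\Lambda\supset\bzn$, the $\Lambda$-shift invariance of such a space in terms of its generator's support $K=\supp\hat\psi$ by the condition $\meas{K\cap(K+\gamma)}=0$ for every $\gamma\in\bzn\setminus\Lambda^\ast$. Taking $\Lambda=A^{-r}\bzn$ and substituting $\Lambda^\ast=B^r\bzn$ reproduces (iii) verbatim.

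It remains to prove (iii)$\Leftrightarrow$(iv). For (iii)$\Rightarrow$(iv), fix $1\le s\le r$; since $B^r\bzn\subset B^s\bzn$, condition (iii) implies the support condition~\eqref{eq:condition-supp} with $J_0=s$. Because \eqref{eq:condition-J-strong} holds trivially for $\LL=\bzn$ (its right-hand side contains $\bzn$), Theorem~\ref{thm:oversamp-frames-support} applied with $\LL=\bzn$ and $J_0=s$ promotes the Parseval frame $\af{\Psi}$ to the Parseval frame $\af{\abs{\det A}^{-s/2}\psi,A,A^{-s}\bzn}$, which is (iv). For the converse (iv)$\Rightarrow$(iii), I would use that $\af{\abs{\det A}^{-s/2}\psi,A,A^{-s}\bzn}$ is a Parseval frame if, and only if, $t^{A^{-s}\bzn}_\alpha=\delta_{\alpha,0}$ for all $\alpha\in\bzn$, the defining sum ranging over $\{j:B^{-j}\alpha\in B^s\bzn\}$. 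Given $k\in\bzn\setminus B^r\bzn$, choose the maximal $s\le r-1$ with $k\in B^s\bzn$; then levels $s$ and $s+1$ are both Parseval (level $0$ being $\af{\Psi}$ itself), and subtracting their identities at $\alpha=k$ leaves the sum over $\{j:B^{-j}k\in B^s\bzn\setminus B^{s+1}\bzn\}$. A short check, using $k\in B^s\bzn\setminus B^{s+1}\bzn$ and that $B$ is an integer matrix, shows this index set reduces to the single value $j=0$, so the surviving term $\hat\psi(\xi)\ov{\hat\psi(\xi+k)}$ vanishes a.e.; this is exactly (iii). The case $r=\infty$ then follows by letting $r\to\infty$, since $\bigcap_r B^r\bzn=\{0\}$ while $\bigcup_r A^{-r}\bzn$ is dense in $\brn$, so all four conditions for $r=\infty$ are the conjunction over finite $r$ of the corresponding conditions.

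I expect the main obstacle to be the correct deployment of step (ii)$\Leftrightarrow$(iii): pinning down the precise higher-dimensional form of the shift-invariance gain theorem, carrying the dual lattice $\Lambda^\ast=B^r\bzn$ through correctly, and confirming that the semi-orthogonal Parseval hypothesis places $W(\psi)$ squarely within its hypotheses. By contrast, the dilation-structure bookkeeping in (i)$\Leftrightarrow$(ii) and the telescoping of the $t_\alpha$-identities in (iv)$\Rightarrow$(iii) should be routine once the decomposition $L^2(\brn)=\bigoplus_j W_j$ and the duality characterization are in hand.
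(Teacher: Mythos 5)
Your cycle (i)$\Rightarrow$(ii)$\Rightarrow$(iii)$\Rightarrow$(iv)$\Rightarrow$(iii)$\Rightarrow\cdots$ is organized correctly, and the (i)$\iff$(ii) step is essentially identical to the paper's: both use the semi-orthogonal decomposition $V(\psi)=\bigoplus_{j<0}D_{A^j}W(\psi)=\bigl(\bigoplus_{j\ge0}D_{A^j}W(\psi)\bigr)^\perp$ in one direction and $W(\psi)=D_AV(\psi)\ominus V(\psi)$ in the other. The one genuine gap is at (ii)$\iff$(iii): you invoke ``the higher-dimensional analogue of the shift-invariance gain theorem of \cite{achkm}'' as if it were available off the shelf, but no such ready-made statement exists in the form you need --- the paper has to prove it itself (Lemma \ref{gain-si}), and its introduction explicitly presents that lemma as a \emph{new} higher-dimensional analogue of the Aldroubi--Cabrelli--Heil--Kornelson--Molter result. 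You correctly identify the exact statement required (for a principal $\Zn$-SI space with generator $\vp$ and a lattice $\LL\supset\Zn$, $\LL$-shift invariance is equivalent to $|K\cap(K+k)|=0$ for $k\in\Zn\setminus\LL^*$, $K=\supp\hat\vp$), and you correctly note that $\{\psi_{0,k}\}_k$ is a Parseval frame for $W(\psi)$ so the lemma applies with $\LL=A^{-r}\Zn$, $\LL^*=B^r\Zn$; but the proof of that lemma (a short argument comparing $\Zn$-periodic and $\LL^*$-periodic multipliers on $\supp\hat\vp$) is missing and must be supplied.

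For (iii)$\iff$(iv) you take a genuinely different, and valid, route. The paper deduces both directions from the wavelet characterizing equations: subtracting the Parseval identities at levels $0$ and $s$ yields \eqref{gain-si2}, an induction extracts $|K\cap(B^{s-1}q+K)|=0$ for $q\in\Zn\setminus B\Zn$, and the converse is delegated to \cite[Theorem 2.1]{CSun2}. You instead prove (iii)$\Rightarrow$(iv) by applying Theorem~\ref{thm:oversamp-frames-support} with $\LL=\Zn$ and $J_0=s$ (both \eqref{eq:condition-J-strong} and the support condition \eqref{eq:condition-supp} are easily checked, and the preserved bounds $C_1=C_2=1$ give Parseval), which keeps the argument internal to the paper rather than outsourcing it to Chui--Sun; and your (iv)$\Rightarrow$(iii) telescopes \emph{consecutive} levels $s$ and $s+1$ at $\alpha=k\in B^s\Zn\setminus B^{s+1}\Zn$, where the nesting $\cdots\supset\Zn\supset B\Zn\supset B^2\Zn\supset\cdots$ does force the surviving index set down to $j=0$, isolating $\hat\psi(\xi)\ov{\hat\psi(\xi+k)}=0$ in one step instead of the paper's induction on partial sums. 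Your treatment of $r=\infty$ as the conjunction over finite $r$ (using $\bigcap_rB^r\Zn=\{0\}$, density of $\bigcup_rA^{-r}\Zn$, and continuity of translation) also works. In short: close the Lemma~\ref{gain-si} gap and the proof is complete; the (iii)$\iff$(iv) leg is a clean alternative to the paper's.
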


When $r=\infty$ we use the convention that $A^{-r}\Zn=\Rn$ and $B^r\Zn=\{0\}$. Thus, (ii) reads that $W(\psi)$ is invariant under all translations in $\Rn$, which is easily seen to be equivalent with $\psi \in \mathcal L_\infty$, and thus $\psi$ is an MSF semi-orthogonal Parseval wavelet, that is $|\hat\psi|= \ch{K}$ for some measurable set $K \subset \Rn$.

\begin{proof}
The equivalence (i) $\iff$ (ii) for orthogonal wavelets is shown in \cite[Lemma 2.2]{Be}. The following argument extends this result to Parseval semi-orthogonal wavelets. 

First, assume (ii).
If $W(\psi)$ is $A^{-r}\Zn$-SI, then so are the spaces $ D_{A^j}(W(\psi))$ for $j\ge 0$.
Since $\psi$ is a semi-orthogonal wavelet, we have
\[
V(\psi) = \bigoplus_{j< 0} D_{A^j}(W(\psi)) = \bigg(\bigoplus_{j\ge 0} D_{A^j}(W(\psi)) \bigg)^\perp,
\]
and thus (i) holds. Conversely, (i) and the identity $W(\psi) =D_{A}(V(\psi)) \ominus V(\psi)$ imply (ii).

The equivalence (ii) $\iff$ (iii) for orthonormal wavelets can be
found in \cite[Theorem 2.5]{Be}. The following argument extends this
result to Parseval semi-orthogonal wavelets as a consequence of a more general lemma about shift-invariance gain for SI spaces, which is motivated by the results from \cite{achkm}. Indeed, (ii) $\iff$ (iii) follows from Lemma \ref{gain-si} with $V=W(\psi)$ and $\LL=A^{-r}\bzn$.

\begin{lemma}\label{gain-si}
Suppose that $V$ is a principal $\bzn$-SI subspace generated by $\vp$, and $\bzn \subset \LL$. Then, $V$ is $\LL$-SI if and only if
\begin{equation}\label{gain-si0}
|K \cap (k+K)| = 0
\qquad\text{for all } k\in \bzn \setminus \LL^*,
\text{ where }K=\supp \hat\vp.
\end{equation}
\end{lemma}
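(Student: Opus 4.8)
The plan is to use the standard fiberization characterization of principal shift-invariant spaces: since $V=\ov{\spa}\{\tran[k]\vp : k\in\bzn\}$ is the principal $\bzn$-SI space generated by $\vp$, a function $f\in L^2(\brn)$ lies in $V$ if and only if $\hat f = m\,\hat\vp$ for some $\bzn$-periodic measurable function $m$ with $m\hat\vp\in L^2(\brn)$. Because translations commute and $V$ is the closed $\bzn$-invariant span of $\vp$, the space $V$ is $\LL$-SI precisely when $\tran[\lambda]\vp\in V$ for every $\lambda\in\LL$ (then $\tran[\lambda]\tran[k]\vp=\tran[k]\tran[\lambda]\vp\in V$ for all $k\in\bzn$, and one passes to the closed span). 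Using $\widehat{\tran[\lambda]\vp}(\xi)=\expo{-2\pi i\lan\lambda,\xi\ran}\hat\vp(\xi)$, membership $\tran[\lambda]\vp\in V$ is equivalent to the existence of a $\bzn$-periodic measurable $m_\lambda$ with $m_\lambda(\xi)=\expo{-2\pi i\lan\lambda,\xi\ran}$ for almost every $\xi\in K=\supp\hat\vp$.

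Next I would reformulate this existence requirement fiberwise over the torus $\brn/\bzn$. A $\bzn$-periodic function is exactly a function on $\brn/\bzn$, so $m_\lambda$ as above exists if and only if $\xi\mapsto\expo{-2\pi i\lan\lambda,\xi\ran}$ is, up to a null set, constant along each fiber $K\cap(\xi+\bzn)$. Two points $\xi,\xi+k\in K$ with $k\in\bzn$ force $\expo{-2\pi i\lan\lambda,\xi\ran}=\expo{-2\pi i\lan\lambda,\xi+k\ran}$, that is $\lan\lambda,k\ran\in\Z$. Hence $\tran[\lambda]\vp\in V$ holds exactly when $\lan\lambda,k\ran\in\Z$ for every $k\in\bzn$ with $\meas{K\cap(K+k)}>0$.

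Then I would quantify over all $\lambda\in\LL$ and invoke the dual lattice. Since $\bzn\subset\LL$ we have $\LL^*\subset\bzn$, and by definition $\lan\lambda,k\ran\in\Z$ for all $\lambda\in\LL$ if and only if $k\in\LL^*$. Thus $V$ is $\LL$-SI if and only if every $k\in\bzn$ with $\meas{K\cap(K+k)}>0$ lies in $\LL^*$, which is exactly the contrapositive of \eqref{gain-si0}. For the implication ($V$ is $\LL$-SI $\implies$ \eqref{gain-si0}), given a hypothetical violating $k_0\in\bzn\setminus\LL^*$ I choose $\lambda_0\in\LL$ with $\lan\lambda_0,k_0\ran\notin\Z$ and derive a contradiction on the positive-measure overlap $K\cap(K+k_0)$ from the paragraph above. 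For the converse, assuming \eqref{gain-si0}, fix $\lambda\in\LL$; the exceptional set is the countable union $\bigcup_k\bigl(K\cap(K+k)\bigr)$ over those $k\in\bzn$ with $\lan\lambda,k\ran\notin\Z$, and each such $k$ lies outside $\LL^*$, so by \eqref{gain-si0} this is a null set, off which the exponential is fiberwise constant; it therefore descends to a bounded measurable $m_\lambda$ on $\brn/\bzn$, giving $\tran[\lambda]\vp\in V$.

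The main obstacle is the measure-theoretic bookkeeping in constructing $m_\lambda$: one must verify that fiberwise constancy holds off a genuine null set (the countable union of null overlaps), that the descended function is measurable on the torus, and that $m_\lambda\hat\vp=\widehat{\tran[\lambda]\vp}$ almost everywhere once both sides are seen to vanish off $K$. These steps are routine but demand care with null sets and measurability; everything else collapses to the algebraic equivalence $\lan\lambda,k\ran\in\Z\ (\forall\,\lambda\in\LL)\iff k\in\LL^*$.
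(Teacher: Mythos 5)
Your proof is correct, and it reaches the conclusion by a genuinely different route than the paper, even though both arguments rest on the same underlying tool: the characterization of the principal $\bzn$-SI space as $\{f:\hat f=m\hat\vp \text{ a.e., } m \text{ measurable and } \bzn\text{-periodic}\}$. The paper's proof exploits the fact that if $V$ is $\LL$-SI then $V$ coincides with the principal $\LL$-SI space generated by $\vp$, hence admits a second description via $\LL^*$-periodic multipliers; it then tests this double description against the specific indicator multiplier $\tilde m=\ch{S}$ with $S=\bigcup_{\gamma\in\LL^*}([0,1]^n+\gamma)$ to get the forward direction, and for the converse explicitly periodizes an arbitrary $\LL^*$-periodic $\tilde m$ into a $\bzn$-periodic $m$ by summing over coset representatives of $\bzn/\LL^*$. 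You instead reduce $\LL$-shift-invariance to the membership $\tran[\lambda]\vp\in V$ for each $\lambda\in\LL$, test with the characters $\xi\mapsto\expo{-2\pi i\lan\lambda,\xi\ran}$, and close the argument with the purely algebraic identity $\lan\lambda,k\ran\in\Z$ for all $\lambda\in\LL$ iff $k\in\LL^*$. Your forward direction is arguably more direct (a single well-chosen $\lambda_0$ kills a violating $k_0$, versus the paper's translation of $S$ over all of $\bzn$); the paper's converse is slightly stronger in that it exhibits the periodization operator matching \emph{every} $\LL^*$-periodic multiplier, not just the exponentials, though this extra generality is not needed for the lemma. The measure-theoretic points you flag (fiberwise constancy off a countable union of null overlaps, measurability of the descended $m_\lambda$ via a measurable selection of $k$ with $\xi+k\in K$) are indeed the only delicate steps, and they go through as you describe.
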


\begin{proof}
The fact that $V$ is a principal SI space with respect to shifts in $\Zn$ implies that
\[
V = \{ f\in L^2(\Rn): \hat f(\xi) = m(\xi) \hat\vp(\xi) 
\quad\text{a.e. $\xi$, for some measurable $\Zn$-periodic } m\},
\]
see \cite{BDR} or \cite{B00}.
If $V$ is also $\LL$-SI, then we also have
\[
V = \{ f\in L^2(\Rn): \hat f(\xi) = \tilde m(\xi) \hat\vp(\xi) 
\quad\text{a.e. $\xi$, for some measurable $\LL^*$-periodic } \tilde m\}.
\]
Let $S= \bigcup_{\gamma \in\LL^*} ([0,1]^n + \gamma)$ and $K=\supp \hat\vp$.  Since $\tilde m=\ch{S}$  is $\LL^*$-periodic, there exists $\Zn$-periodic $m$ such that
\[
\tilde m(\xi) \hat\vp(\xi) = m(\xi) \hat\vp(\xi) \qquad\text{for a.e. }\xi.
\]
This implies that $m(\xi)=1$ for a.e. $\xi \in K \cap S$. Thus, for a fixed $k_0\in \bzn \setminus \LL^*$, we have $\tilde m(\xi-k_0)=0$ and $m(\xi-k_0)=1$ for a.e. $\xi \in K\cap S$. Thus, we must have $\hat\vp(\xi-k_0)=0$ for such $\xi$, which shows that $|K \cap (k_0+K)\cap S|=0$. Replacing $S$ by its translate $k+S$, shows that $|K \cap (K+k_0)\cap (k+S)|=0$ for all $k\in\Zn$, and thus \eqref{gain-si0} holds. 

Conversely, assume \eqref{gain-si0}. Let $\mathcal D \ni 0$ be the set
of representatives of distinct cosets of $\bzn/ \LL^*$. Suppose that
$f\in L^2(\Rn)$ belongs to the $\LL$-SI space generated by $\vp$, \ie $\hat f(\xi) = \tilde m(\xi) \hat\vp(\xi)$
a.e. for some measurable $\LL^*$-periodic function $\tilde m$. Define the $\bzn$-periodic function 
\[
m(\xi) = \sum_{d\in \mathcal D} \tilde m(\xi+d) \ch{\tilde K}(\xi+d),\qquad\text{where } \tilde K = \bigcup_{\ga \in \LL^*}(K+\gamma).
\]
Our assumption \eqref{gain-si0} implies that $|(\tilde K -d) \cap K|=0$ for $d\in \mathcal D \setminus \{0\}$, and hence
\[
m(\xi) \hat\vp(\xi) = \sum_{d\in \mathcal D} \tilde m(\xi+d) \ch{\tilde K}(\xi+d) \hat\vp(\xi) = \tilde m(\xi) \hat\vp(\xi)
\qquad\text{for a.e. }\xi.
\]
This shows that $\LL$-SI space generated by $\vp$ actually coincides with $V$. This completes the proof of Lemma \ref{gain-si}.
\end{proof}

Finally, the equivalence (iii) $\iff$ (iv) can be deduced from wavelet
characterizing equations for integer dilations \cite{B01, Cal} as in
the work of Catal\'an \cite{Cat} and Chui and Sun \cite{CSun1, CSun2}.
Indeed, the fact that both $\af{\psi,A,\bzn}$ and $\af{|\det
  A|^{-s/2}\psi,A,A^{-s}\bzn}$, $s\in \N$, are Parseval frames implies that
\begin{equation}\label{gain-si2}
\sum_{j=0}^{s-1} \hat\psi(B^j\xi) \ov{\hat\psi(B^j(\xi+q))}=0
\qquad\text{for a.e. $\xi$ and for }q\in \Zn \setminus B\Zn.
\end{equation}
In particular, \eqref{gain-si2} with $s=1$ implies that $|K\cap (q+K)|=0$ for $q\in \Zn \setminus B\Zn$. Then, by induction \eqref{gain-si2} implies that $|K\cap (B^{s-1}q+K)|=0$ for $q\in \Zn \setminus B\Zn$ and $s=1,\ldots,r$. Since, $\bigcup_{s=1}^r B^{s-1}(\Zn \setminus B\Zn) = \Zn \setminus B^r\Zn$, we have (iii). Conversely, (iii) implies that formula \eqref{gain-si2} holds for $s=1,\ldots,r$. 
Thus, by \cite[Theorem 2.1]{CSun2} the affine systems $\af{|\det A|^{-s/2}\psi,A,A^{-s}\bzn}$, $s=1,\ldots,r$, are Parseval frames, which is the assertion (iv). This completes the proof of Theorem \ref{bh}.
\end{proof}

\bibliographystyle{plainnat}

\end{document}